\newcommand{\sm}{\left(\begin{smallmatrix}}
\newcommand{\esm}{\end{smallmatrix}\right)}
\newcommand{\bpm}{\begin{pmatrix}}
\newcommand{\ebpm}{\end{pmatrix}}
\newcommand{\mfp}{\mathfrak{p}}
\newcommand{\mfo}{\mathfrak{o}}
\newcommand{\A}{\mathbb{A}}
\newcommand{\R}{\mathbb{R}}
\newcommand{\C}{\mathbb{C}}
\renewcommand{\H}{\mathbb{H}}
\newcommand{\HH}{\mathbb{H}}
\newcommand{\Q}{\mathbb{Q}}
\newcommand{\Z}{\mathbb{Z}}
\newcommand{\X}{\mathbb{X}}
\newcommand{\scrA}{\mathcal{A}}
\newcommand{\scrP}{\mathcal{P}}
\newcommand{\scrN}{\mathcal{N}}
\newcommand{\Siegel}{\mathfrak{S}}
\newcommand{\ba}{\mathfrak{a}}
\newcommand{\bd}{\mathfrak{d}}
\newcommand{\bp}{\mathfrak{p}}
\newcommand{\bo}{\mathfrak{o}}
\newcommand{\bn}{\mathfrak{n}}
\DeclareMathOperator{\GL}{GL}
\DeclareMathOperator{\PGL}{PGL}
\DeclareMathOperator{\PO}{PO}
\DeclareMathOperator{\PSO}{PSO}
\DeclareMathOperator{\PU}{PU}
\DeclareMathOperator{\K}{K}
\DeclareMathOperator{\bM}{M}
\DeclareMathOperator{\tr}{tr}
\DeclareMathOperator{\sgn}{sgn}
\DeclareMathOperator{\Res}{Res}
\DeclareMathOperator{\sym}{Sym}
\DeclareMathOperator{\ord}{ord}
\DeclareMathOperator{\vol}{vol}
\DeclareMathOperator{\G}{G}
\DeclareMathOperator{\BorelB}{B}
\DeclareMathOperator{\N}{N}
\DeclareMathOperator{\maxK}{K}
\DeclareMathOperator{\CentZ}{Z}
\DeclareMathOperator{\finite}{finite}
\def\vecm{{\text{\boldmath$m$}}}
\definecolor{blue}{rgb}{0,0,1}
\definecolor{red}{rgb}{1,0,0}
\definecolor{green}{rgb}{0,.6,.2}
\definecolor{purple}{rgb}{1,0,1}
\long\def\red#1\endred{\textcolor{red}{#1}}
\long\def\blue#1\endblue{\textcolor{blue}{#1}}
\long\def\purple#1\endpurple{\textcolor{purple}{ #1}}
\long\def\green#1\endgreen{\textcolor{green}{#1}}
\newtheorem{theorem}{Theorem}[section]
\newtheorem{conjecture}[theorem]{Conjecture}
\newtheorem{proposition}[theorem]{Proposition}
\newtheorem{lemma}[theorem]{Lemma}
\theoremstyle{remark}
\newtheorem{remark}[theorem]{Remark}
\numberwithin{equation}{section}
\title{Linnik problem for Maass--Hecke cuspforms and effective multiplicity one theorem}
\author{Junehyuk Jung and Min Lee}
\address{Department of Mathematics, Brown University, Providence, RI 02912 USA}
\email{junehyuk{\textunderscore}jung@brown.edu}
\address{Fry Building, University of Bristol, Woodland road, BS8 1UG, United Kingdom}
\email{min.lee@bristol.ac.uk}
\thanks{We thank M. Young for sharing the idea of improving the exponent in Theorem \ref{thm:linnik},
which previously was $8/\alpha$.
J.J. thanks L. Silberman, Z. Shem-Tov, P. Humphries, P. Sarnak,  A. Zaman, and J. Thorner for many helpful discussions.
M.L. thanks A. Booker and M. Krishnamurthy for helpful discussions.
J.J. is partially supported by Sloan Research Fellowship and by NSF grant DMS-2050123. 
J.J. thanks school of mathematics of University of Bristol for hospitality and support during visit for collaboration with M. L.
M.L. is partially supported by Royal Society University Research Fellowship. }
\begin{document}

\begin{abstract}
We investigate two related problems concerning the dimension of joint eigenspaces of the Laplace--Beltrami operator and a finite set of Hecke operators on $\mathbb{X}=\PGL_2(\mathbb{Z})\backslash \mathbb{H}$. First, we consider Linnik problem for Maass--Hecke cuspforms. We prove that the dimension of such a joint eigenspace, for Maass--Hecke cuspforms with eigenparameter in $[T, T+1]$, associated to Hecke operators $T_p$ with $p < (\log T)^\alpha$ is $O_\epsilon (T^{{\frac{4}{\alpha}} + \epsilon})$. For this, we prove a new form of spectral large sieve inequality for symmetric-squares of Maass--Hecke cuspforms, by exploiting the fact that the forms under consideration are unramified at every non-archimedean place.

Second, we consider the effective multiplicity one problem, determining the minimal number of Hecke eigenvalues needed to distinguish two Maass--Hecke cusp forms with the same Laplace eigenvalue. We prove that for any fixed $\eta>0$, if two Maass--Hecke cuspforms, with eigenparameter $t$, share Hecke eigenvalues $\lambda_{\phi_1}(n) = \lambda_{\phi_2}(n)$ for all $n < \eta t$, and $t$ is sufficiently large, then the forms are proportional. This improves the previously known best bound due to Huntley \cite{hunt}. 
Key ingredient for the improvement is the result by Brook and Lindenstrauss \cite{MR3260861} that classifies quantum limits of a joint eigenfunction of a Hecke operator and the Laplace--Beltrami operator on arithmetic hyperbolic surfaces.

We also discuss generalizations of these results to Maass--Hecke cuspforms on $\GL_2$ over arbitrary number field.
\end{abstract}
\maketitle

\section{Introduction}

Let $\X = \PGL_2(\Z)\backslash \HH$ be the modular surface and let $\Delta$ be the Laplace--Beltrami operator on $\X$. The space of square-integrable functions $L^2(\X)$ decomposes as a direct sum: $L^2(\X) = \Theta \oplus \Theta_0 \oplus L_{\text{cusp}}^2(\X)$, where $\Theta$ is the space spanned by incomplete Eisenstein series, $\Theta_0$ is the space of constant functions, and $L_{\text{cusp}}^2(\X)$ is the space of cuspidal functions.  Selberg's celebrated theorem \cite{MR0088511} establishes that the spectrum of $-\Delta$ on $\Theta_0 \oplus L_{\text{cusp}}^2(\X)$ is discrete, consisting of infinitely many eigenvalues in $[0, \infty)$ tending to $+\infty$.  The spectrum of $-\Delta$ on $\Theta$ is purely continuous.

The Hecke operators $\{T_n\}_{n \geq 1}$ form a commuting family of self-adjoint operators on $\X$, defined by
\begin{equation*}
T_nf(z) = \frac{1}{\sqrt{n}}\sum_{ad=n} \sum_{b \bmod d} f\left(\frac{az+b}{d}\right).
\end{equation*}
These operators commute with $\Delta$, and consequently, $L_{\text{cusp}}^2(\X)$ admits a basis of joint eigenfunctions of $\Delta$ and $\{T_n\}_{n\geq 1}$, known as Maass--Hecke cusp forms.  For a Maass--Hecke cusp form $\phi$, we denote its eigenvalue of the Laplace--Beltrami operator by $\lambda_\phi = \frac{1}{4} + t_\phi^2$, where $t_\phi$ is called the eigenparameter of $\phi$.  The $n$-th Hecke eigenvalue of $\phi$ is denoted by $\lambda_\phi(n)$.

The strong multiplicity one theorem \cite{MR546599,MR618323,MR623137} guarantees that two Maass--Hecke cusp forms are proportional if they share the same Laplacian eigenvalue and Hecke eigenvalues for almost all primes.  This raises a natural question, analogous to Linnik's problem for Dirichlet characters: given two Maass--Hecke cusp forms $\phi_1$ and $\phi_2$ with the same Laplacian eigenvalue, how large can the smallest prime $p$ be such that their $p$-th Hecke eigenvalues differ (i.e., $\lambda_{\phi_1}(p) \neq \lambda_{\phi_2}(p)$)?  Alternatively, following Linnik's approach as described in \cite[\S1]{DK}, one can ask about the number of Maass--Hecke cusp forms sharing a given Laplacian eigenvalue and a set of Hecke eigenvalues.

This article addresses these questions in two concrete forms:
\begin{enumerate}
    \item What is the dimension of the joint eigenspace of a given finite set of Hecke operators and the Laplacian operator $\Delta$?
    \item What is the minimal number of Hecke eigenvalues required to determine the proportionality of two Maass--Hecke cusp forms?
\end{enumerate} 

In regards to the first question, we prove that only a relatively small number of Hecke operators are required to give a very strong upper bound for the dimension of the joint eigenspaces.
\begin{theorem}\label{thm:linnik}
Fix any $\alpha>0$. Among Maass--Hecke cuspforms having the eigenparameter in the interval $\lbrack T,T+1\rbrack$, the dimension of the joint eigenspace the Hecke operators $T_p$ with $p$ less than $(\log T)^\alpha$ is
\[
O(T^{\frac{4}{\alpha}+\epsilon}).
\]
\end{theorem}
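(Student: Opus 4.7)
The plan is to prove Theorem~\ref{thm:linnik} by an amplification method based on symmetric-square lifts. Let $\mathcal{E}$ denote the joint eigenspace and $d := \dim \mathcal{E}$, and set $P := (\log T)^\alpha$. The decisive structural observation is that the Hecke identity $\lambda_\phi(p^2) = \lambda_\phi(p)^2 - 1 = \lambda_{\sym^2 \phi}(p)$, combined with multiplicativity, forces every $\phi \in \mathcal{E}$ to share the symmetric-square Hecke eigenvalue $\lambda_{\sym^2 \phi}(n)$ at every squarefree $n$ whose prime factors all lie below $P$. In other words, the problem transfers to a spectral second-moment question for the thin family $\{\sym^2\phi : t_\phi \in [T, T+1]\}$ inside the $\GL_3$ spectrum.

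Concretely, I would fix a reference form $\phi_0 \in \mathcal{E}$ and introduce an amplifier
\[
B(\phi) \; := \; \sum_{n \in \mathcal{S}} \overline{\lambda_{\sym^2\phi_0}(n)}\, \lambda_{\sym^2\phi}(n),
\]
where $\mathcal{S}$ is the set of squarefree $P$-smooth integers $\le N$ for a parameter $N$ to be chosen later. By construction $B(\phi) = B_0 := \sum_{n \in \mathcal{S}} |\lambda_{\sym^2\phi_0}(n)|^2$ for every $\phi \in \mathcal{E}$, so
\[
d \cdot B_0^2 \;=\; \sum_{\phi \in \mathcal{E}} |B(\phi)|^2 \;\le\; \sum_{t_\phi \in [T, T+1]} |B(\phi)|^2.
\]
A lower bound $B_0 \gg \rho(u)\, N$, with $u = \log N / \log P$ and $\rho$ the Dickman function, follows from Rankin--Selberg theory for $\sym^2\phi_0 \times \sym^2\phi_0$ (whose $L$-function has a simple pole at $s=1$) combined with a Tauberian argument over smooth integers.

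The crucial analytic step is an upper bound on the right-hand side via the new spectral large sieve for symmetric squares, of the shape
\[
\sum_{t_\phi \in [T, T+1]} \Bigl|\sum_{n \le N} a_n\, \lambda_{\sym^2\phi}(n)\Bigr|^2 \;\ll_\epsilon\; (T + N^\kappa)^{1+\epsilon}\, \|a\|_2^2,
\]
with an exponent $\kappa$ strictly smaller than what the ambient $\GL_3$ Weyl law would deliver. Expanding the square and using multiplicativity, the off-diagonal terms become sums $\sum_\phi h(t_\phi)\, \lambda_{\sym^2\phi}(mn)$, and via $\lambda_{\sym^2\phi}(mn) = \lambda_\phi((mn)^2)$ for squarefree coprime $mn$ they reduce to $\GL_2$ Kuznetsov sums at index $(mn)^2$. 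The hypothesis that every $\phi$ is unramified at every finite place is precisely what permits this reduction at full level $\PGL_2(\Z)$ and enables sharp estimation of the resulting Kloosterman sums.

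Combining these estimates gives $d \ll (T + N^\kappa)^{1+\epsilon}/(\rho(u)\, N)$, and balancing $N^\kappa \asymp T$ while tracking the Dickman decay $\rho(u)^{-1}$ yields the claimed bound $d \ll T^{4/\alpha + \epsilon}$. The main obstacle is the proof of the large sieve inequality itself: one needs the sharp thin-family diagonal of size $T$ (matching the $\GL_2$ Weyl count, rather than the generic $\GL_3$ count $\sim T^{5/2}$), together with a sufficiently small off-diagonal exponent $\kappa$; without both improvements simultaneously, the Dickman saving cannot outpace the trivial Weyl bound $d \ll T$.
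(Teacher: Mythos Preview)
Your amplification-plus-large-sieve scaffold is the right one, but both of the two key inputs are handled differently in the paper, and your amplifier lower bound contains a genuine gap.

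\textbf{Large sieve.} The paper never touches Kuznetsov or Kloosterman sums. It follows the Duke--Kowalski convexity route: after dualizing, the inequality reduces to bounding $\sum_{\pi_2 \in \scrA(T,1)} \sum_{\vecm} \lambda_{\pi_1^{(2)} \times \widetilde{\pi_2}^{(2)}}(\vecm)\,\phi(N(\vecm)/X)$, and Mellin inversion plus the convexity bound for $L(s, \sym^2\pi_1 \times \sym^2\pi_2)$ gives $X + C^{1/2}\cdot \#\scrA(T,1)$ immediately. The entire saving lies in the conductor: because $t_{\pi_1}, t_{\pi_2} \in [T,T+1]$, two of the nine archimedean shifts of $\sym^2\pi_1 \times \sym^2\pi_2$ are $O(1)$ and one is identically zero, so $C \ll T^6$ rather than the generic $T^9$; convexity contributes $T^3$, the local Weyl count contributes $T$, and one obtains the $T^4$ in \eqref{siiieve}. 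This is simpler than a Kuznetsov analysis and delivers exactly the constant needed; the paper in fact remarks that the existing Kuznetsov-based inequality (Theorem~\ref{sieve2}) does \emph{not} improve the final exponent $4/\alpha$, so your proposed route, even if carried out, would at best match the convexity argument here.

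\textbf{Amplifier lower bound.} This is where your argument breaks. The claim $B_0 \gg \rho(u)\,N$ via Rankin--Selberg and a Tauberian theorem does not follow: the pole of $L(s,\sym^2\phi_0 \times \sym^2\phi_0)$ controls $\sum_{n \le N} |\lambda_{\sym^2\phi_0}(n)|^2$ over all $n$, but restricting to $P$-smooth $n$ with $P = (\log T)^\alpha$ requires effective control of $\sum_{p \le P} |\lambda_{\sym^2\phi_0}(p)|^2$, i.e.\ an effective prime number theorem for that degree-$9$ $L$-function at $x = (\log T)^\alpha$, far below any known range (its conductor is a power of $T$). Worse, since $\lambda_{\sym^2\phi_0}(p) = \lambda_{\phi_0}(p)^2 - 1$, nothing forbids $|\lambda_{\phi_0}(p)| \approx 1$ for most $p \le P$, in which case a $\sym^2$-only amplifier collapses to $B_0 = O(1)$. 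The paper's fix is the elementary dichotomy: for every prime $p$ one has $|\lambda_{\phi_0}(p)| \ge \tfrac12$ or $|\lambda_{\phi_0}(p^2)| \ge \tfrac12$, so at least half of the primes below $P$ lie in one class $\scrP_i$ ($i\in\{1,2\}$). One then builds the amplifier from products of $\ell \approx \tfrac{\beta \log T}{\alpha \log\log T}$ distinct primes from $\scrP_i$, obtaining the unconditional combinatorial lower bound $B_0 \ge 2^{-2\ell}\binom{|\scrP_i|}{\ell}$ with no analytic input on $\phi_0$ whatsoever. Depending on $i$ one feeds this into the large sieve \eqref{e:spec_large_sievei_n=2} for $\pi$ or \eqref{e:spec_large_sievei_n=2_sym} for $\sym^2\pi$; the exponent $4/\alpha$ is dictated by the worse case $i=2$.
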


\begin{remark}
For a fixed finite set of primes $S$, it was shown in \cite{sr} that the dimension of a joint eigenspace of $\Delta$ and the Hecke operators $\{T_p\}_{p \in S}$ with eigenparameter $t$ is $\ll \frac{t}{(\log t)^{|S|+1}}$.
\end{remark}

\begin{remark}
For general closed Riemannian surfaces, an eigenspace of $\Delta$ with the eigenparameter $t$ has dimension $\ll t$ \cite{Lars}, and this is sharp for the round sphere. For negatively curved surfaces, a stronger upper bound of the form $\ll \frac{t}{\log t}$ is proven by B\'erard \cite{be77}.
\end{remark}

The second question is to determine how many Hecke operators are required to actually single out an automorphic form,
often referred to as an effective multiplicity one theorem \cite{MR778093,Bru06}. We provide a slight improvement over the previously known bound \cite{hunt}.

\begin{theorem}\label{thm:effective_Q}
Fix $\eta>0$.
There exists $T_\eta>0$ such that for any two Maass--Hecke cusp forms $\phi_1$, $\phi_2$
with the same Laplace eigenpapameter $t>T_{\eta}$
such that their Hecke eigenvalues also satisfy
\begin{equation*}
\lambda_{\phi_1}(n) = \lambda_{\phi_2}(n)
\end{equation*}
for all $n <\eta t$ implies that $\phi_1$ and $\phi_2$ are equal (up to constant multiplication).
\end{theorem}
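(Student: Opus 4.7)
The plan is to argue by contradiction. Suppose $\phi_1$ and $\phi_2$ are Maass--Hecke cuspforms with the same Laplace eigenparameter $t$, linearly independent (so without loss of generality $\phi_1\perp\phi_2$ and $\|\phi_j\|=1$), and agreeing on all Hecke eigenvalues $\lambda_{\phi_j}(n)$ for $n<\eta t$. Fix a smooth cutoff $V$ supported in $[1,2]$ and introduce the Rankin--Selberg partial sums
\[
S_{jk}(X)=\sum_n \lambda_{\phi_j}(n)\lambda_{\phi_k}(n)V(n/X).
\]
Because $\lambda_{\phi_1}(n)\lambda_{\phi_2}(n)=\lambda_{\phi_1}(n)^2$ throughout the support of $V(\cdot/X)$ when $X<\eta t/2$, we obtain the identity $S_{12}(X)=S_{11}(X)$ in this range. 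The strategy is to evaluate both sides at some $X\asymp t$: the diagonal $S_{11}(X)$ is a positive quantity comparable to $X\,L(1,\sym^2\phi_1)$, while the off-diagonal $S_{12}(X)$ must be shown to be strictly smaller, producing the contradiction.

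The diagonal estimate is standard: Mellin inversion and a contour shift past the simple pole of $\zeta(s)$ at $s=1$ give $S_{11}(X)\sim c\, L(1,\sym^2\phi_1)\, X$, and the unconditional Hoffstein--Lockhart--Iwaniec lower bound $L(1,\sym^2\phi)\gg(\log t)^{-1}$ yields $S_{11}(X)\gg X/\log t$. For the off-diagonal, $L(s,\phi_1\times\phi_2)$ is entire because $\phi_1\not\propto\phi_2$, has analytic conductor $\asymp t^4$, and after shifting the Mellin contour to $\Re s=\tfrac12$ we have
\[
|S_{12}(X)|\ll X^{1/2+\epsilon}\sup_{|\tau|\le t^\epsilon}\bigl|L(\tfrac12+i\tau,\phi_1\times\phi_2)\bigr|.
\]
A convexity bound $\ll t^{1+\epsilon}$ only yields $|S_{12}(X)|\ll t^{3/2+\epsilon}$ at $X\asymp t$, which recovers the weaker $N\gg t^2$-type bound of Huntley; the gain we need is essentially Weyl-quality subconvexity in the $t$-aspect.

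Since such a pointwise subconvex bound is not available unconditionally, I would work on average using the new spectral large sieve for symmetric squares developed elsewhere in the paper (the same input that powers Theorem~\ref{thm:linnik}). Factoring $L(s,\phi_1\times\phi_2)$ through $L(s,\phi_2\otimes\sym^2\phi_1)$ and invoking Watson's triple-product formula, the large sieve controls the second moment $\sum_{t_{\phi_2}\in[t,t+1]}|L(\tfrac12,\phi_1\times\phi_2)|^2$, giving Lindel\"of on average over a family of size $t$. The Brooks--Lindenstrauss classification \cite{MR3260861} of quantum limits for joint $(\Delta,T_p)$-eigenfunctions is then used to exclude a single exceptional $\phi_2$ from concentrating this mean square: along any hypothetical sequence $t_n\to\infty$ of counterexamples, we would extract a weak-$*$ limit of $|\phi_1^{(n)}\overline{\phi_2^{(n)}}|\,dV$ whose tested values against a suitable Hecke polynomial in the $T_p$ with $p<\eta t_n$ deviate from the Liouville value by a positive amount, violating the classification.

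The main obstacle is this last step, namely turning the qualitative \cite{MR3260861} into an effective exclusion at scale $\eta t$. I would handle it by a compactness/contradiction argument: combine the second moment estimate above with Cauchy--Schwarz so that, conditional on the existence of exceptional $\phi_2$, the excess mass of $|\phi_1\overline{\phi_2}|$-integrals against Hecke test functions is bounded below uniformly in $t$; then pass to a weak-$*$ subsequential limit and apply \cite{MR3260861} to reach a contradiction. Once this is achieved, comparing $S_{12}(\eta t)$ with $S_{11}(\eta t)$ forces $\phi_1\propto\phi_2$ for all sufficiently large $t$, yielding the theorem.
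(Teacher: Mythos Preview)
Your approach has a genuine gap, and you have missed the central idea that makes the theorem go through without any subconvexity input.

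The paper's argument is far more direct. Given $\phi_1,\phi_2$ with the same Laplace eigenparameter $t$ and matching Hecke eigenvalues for $n<\eta t$, one forms the normalized difference $f=(\phi_1-c\phi_2)/\|\phi_1-c\phi_2\|_2$, where $c$ is chosen so that the Fourier coefficients agree (recall that for a Maass--Hecke form the Fourier coefficients are proportional to the Hecke eigenvalues). The point is that $f$ is \emph{itself} a joint eigenfunction of $\Delta$ and of every $T_n$ with $n<\eta t$, and it has $\rho_f(n)=0$ for all $n<\eta t$. This vanishing forces the mass of $f$ to be negligible on any compact set contained in $\{y\geq \eta^{-1}\}$: in that region only Fourier terms with $n\geq \eta t$ survive, and for these $2\pi n y\geq 2\pi t$, so the $K$-Bessel factor is already in its exponentially decaying regime. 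Now Brooks--Lindenstrauss applies directly to $|f|^2\,d\mu$ and says any weak-$*$ limit is $c\,d\mu$; a separate no-escape-of-mass argument (Lemma~\ref{lem1}) shows $c>0$. Since $\{y\geq\eta^{-1}\}$ contains compact sets of positive measure, this contradicts the previous paragraph. No $L$-function estimate beyond standard Rankin--Selberg asymptotics and the $K$-Bessel transition is used.

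Your detour through $S_{12}(X)$ and the spectral large sieve does not close. First, Brooks--Lindenstrauss classifies weak-$*$ limits of $|f|^2$ for a \emph{single} joint eigenfunction $f$; it says nothing about limits of $|\phi_1\overline{\phi_2}|\,dV$, so your final compactness step has no theorem to invoke. Second, the large sieve bounds a second moment over a spectral window and does not by itself control an individual $|L(\tfrac12,\phi_1\times\phi_2)|$; you would need an amplifier adapted to the specific pair, and the natural amplifier built from the shared Hecke eigenvalues with $p<\eta t$ is precisely what fails to separate $\phi_1$ from $\phi_2$. The factorization through $L(s,\phi_2\otimes\sym^2\phi_1)$ and the appeal to Watson's formula are also not correct as stated. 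In fact, the paper explicitly remarks that Theorems~\ref{thm:linnik} and~\ref{thm:effective_Q} are proved by distinct methods, and the large sieve plays no role in the latter.
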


Prior work established the theorem only for $\eta$ exceeding some fixed positive constant \cite{hunt}. However, it is known that any subconvexity estimate for the Rankin-Selberg $L$-function associated with two Maass--Hecke cusp forms would yield a power-saving improvement over the number of Hecke eigenvalues required by Theorem~\ref{thm:effective_Q}.  Specifically, under the Generalized Lindelöf Hypothesis for these $L$-functions, $O_\epsilon(t^\epsilon)$ Hecke eigenvalues would suffice.  Assuming the stronger Grand Riemann Hypothesis, this number reduces to $O((\log t)^2)$, as shown in \cite{sr}.  The strongest form of related conjectures predicts that equality of the eigenparameters, $t_{\phi_1} = t_{\phi_2} = t$, alone should imply that $\phi_1$ and $\phi_2$ are proportional \cite{MR489542}.
\begin{remark}
For holomorphic cuspforms of level $1$, a conjecture by Hida and Maeda \cite{MR1610859} implies that the second Hecke eigenvalue of a holomorphic Hecke cuspform uniquely determines the form.   
\end{remark}

\subsection{Generalization to Maass forms on \texorpdfstring{$\PGL_2$}{GL2} over number fields}

In this article, we also study the generalization of Theorem \ref{thm:linnik} and Theorem \ref{thm:effective_Q} to automorphic forms on $\PGL_2$ over arbitrary number fields. 

Let $F$ be a number field. 
For each place $v$ of $F$, let $F_v$ denote the completion of $F$ at $v$ with respect to the normalized valuation $\|\cdot\|_v$ and $\A = \A_F=\prod_v F_v$ be the ring of ad\'eles of $F$.
Let $d_F$ be the number of archimedean places of $F$.
For non-archimedean place $v<\infty$, we let $\mfo_v$ denote the ring of integers of $F_v$,
$\mfp_v$ the unique maximal ideal in $\mfo_v$
and $q_v$ the cardinality of $\mfo_v/\mfp_v$.

For each place $v$ of $F$, let $\K_v$ be the connected maximal compact subgroup of $\PGL_2(F_v)$:
\[\K_v = \begin{cases}
\PSO(2) & \text{ if } F_v=\R, \\
\PU(2) & \text{ if } F_v=\C, \\
\PGL_2(\mfo_v) & \text{ if } v< \infty
\end{cases}
\]
and $\K=\prod_v \K_v$. 
Then a cuspidal automorphic representation $(\pi, V_\pi)$ of $\PGL_2(F)\backslash \PGL_2(\A)$
is called spherical if $\dim(\pi^{\K})=1$.

Let $\pi\cong \otimes_v \pi_v$ be a cuspidal automorphic spherical representation of $\PGL_2(\A)$. 
Let $\{t_{\pi_v}\}_{v\mid \infty}$ be the Langalnd parameter, i.e., $\lambda_{\pi_v} = \frac{1}{4}+t_{\pi_v}^2$ when $F_v=\R$ and $\lambda_{\pi_v} = 1+4t_{\pi_v}^2$ when $F_v=\C$ are eigenvalues of the Laplace--Beltrami operator on $\X_F= \PGL_2(F) \backslash \PGL_2(\A)/\maxK$ corresponding to the automorphic function corresponding to $\pi$ (see \S\ref{ss:notation}). 
Let $t_\pi^2 = \lambda_\pi = \sum_{v|\infty}\lambda_{\pi_v}$, and let $d$ be the dimension of $\X_F$, i.e., 
\[
d= 2\#\{v~:~F_v =\mathbb{R}\}+3\#\{v~:~F_v =\mathbb{C}\}.
\]

Then the generalization of Theorem \ref{thm:linnik} to the number field $F$ we prove in the article is:
\begin{theorem}\label{thm:linnikF}
For $1<G<T$, let $\scrA(T, G)$ be the set of irreducible cuspidal representations of $\PGL_2(\A)$ satisfying the followings:
$\pi=\otimes_v \pi_v$,  
\begin{itemize}
\item $\pi_v$ is unramified for all places $v$ of $F$;
\item $T\leq t_\pi \leq T+G$.
\end{itemize}
For a cuspidal automorphic representation $\pi_0$ in $\scrA(T, G)$, let 
\begin{equation*}
\scrA_{\pi_0} (T, G; \alpha) = \left\{\pi=\otimes_v \pi_v \in \scrA(T, G):\, \pi_v\cong \pi_{0, v} \text{ for any }v\in S_{\finite} \text{ such that } q_v \leq (\log T)^\alpha \right\}. 
\end{equation*}
Then 
\begin{equation*}
\#\scrA_{\pi_0}(T, 1, \alpha) \ll_{\varepsilon, F} 
T^{\frac{d-1+3\#S_\infty}{\alpha}+\varepsilon}.
\end{equation*}
\end{theorem}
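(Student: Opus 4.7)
Plan.

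I would adapt the amplification plus spectral large sieve argument underlying Theorem~\ref{thm:linnik} to the number field $F$. Set $P=(\log T)^\alpha$ and call an integral ideal of $F$ $P$-smooth if every prime ideal factor has norm at most $P$. The hypothesis $\pi_v\cong\pi_{0,v}$ at each finite $v$ with $q_v\leq P$ forces
\[
\lambda_{\sym^2\pi}(\bn)=\lambda_{\sym^2\pi_0}(\bn)\qquad\text{for every $P$-smooth ideal }\bn
\]
and every $\pi\in\scrA_{\pi_0}(T,1,\alpha)$. This agreement on the ``small-prime'' part of the symmetric-square Hecke series is the quantitative input that drives the rest of the argument.

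Building on this, I would form an amplifier
\[
A_\pi=\sum_{\bn}c(\bn)\,\overline{\lambda_{\sym^2\pi_0}(\bn)}\,\lambda_{\sym^2\pi}(\bn),
\]
with weights $c(\bn)$ supported on $P$-smooth ideals, chosen so that for every $\pi\in\scrA_{\pi_0}(T,1,\alpha)$ the amplifier collapses to the positive diagonal $V:=\sum_{\bn}c(\bn)\,|\lambda_{\sym^2\pi_0}(\bn)|^2$. The main step is then to establish a spectral large sieve of the shape
\[
\sum_{\pi\in\scrA(T, 1)}|A_\pi|^2\ll_\varepsilon T^{d-1+3\#S_\infty+\varepsilon}\,\|c\|_2^2
\]
(in an appropriate range for the amplifier support), by inserting $c(\bn)\lambda_{\sym^2\pi}(\bn)$ into a Kuznetsov/pre-trace formula on $\PGL_2/F$ after opening $\sym^2$ via the Gelbart--Jacquet lift. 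The exponent $d-1$ reflects the Plancherel density of $\PGL_2(\A)$-parameters on the shell $\sum_v t_{\pi_v}^2=T^2$; the additional $3\#S_\infty$ is the archimedean inflation produced by passing to $\sym^2\pi_v$ on $\PGL_3(F_v)$, which contributes three extra parameters per infinite place. The assumed unramifiedness at every finite place is what lets one use the clean Kuznetsov/pre-trace formula with no level structure. Combining $|\scrA_{\pi_0}(T,1,\alpha)|\cdot V^2\leq\sum_{\pi\in\scrA(T, 1)}|A_\pi|^2$ with this large sieve and optimizing $c$ then produces the stated bound.

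The main obstacle is the spectral large sieve above. Even granting the $F=\Q$ case (the content of Theorem~\ref{thm:linnik}), over a general $F$ one must construct Kuznetsov test functions simultaneously localized at every real and complex archimedean place, bound the off-diagonal Kloosterman/Bessel contributions uniformly when the amplifier support is restricted to $P$-smooth ideals, and verify that the archimedean diagonal contributes exactly $T^{3\#S_\infty}$ via Plancherel-density analysis for $\sym^2\pi_v$ on $\PGL_3(F_v)$. Packaging this uniformly across number fields is the bulk of the effort.
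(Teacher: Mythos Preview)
Your overall strategy---amplification followed by a spectral large sieve---matches the paper's, but two points diverge, one methodological and one a genuine gap.

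\textbf{The large sieve.} You propose to prove
\[
\sum_{\pi\in\scrA(T, 1)}|A_\pi|^2\ll_\varepsilon T^{d-1+3\#S_\infty+\varepsilon}\,\|c\|_2^2
\]
via a Kuznetsov or pre-trace formula, and identify the off-diagonal Kloosterman/Bessel control as ``the bulk of the effort''. The paper bypasses all of this. Its Theorem~\ref{thm:spec_large_sieve_n=2_sym2} is obtained by the duality principle: open the square in the dual variable, recognise the resulting ideal sum as a smoothed partial sum of Rankin--Selberg coefficients of $L(s,\pi_1^{(2)}\times\widetilde{\pi_2}^{(2)})$ (resp.\ $L(s,\pi_1\times\widetilde{\pi_2})$), shift the contour, and invoke the convexity bound \eqref{e:preconvexGL2}. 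The exponent $3\#S_\infty$ falls out of the archimedean conductor estimate $C(\pi_1^{(2)}\times\pi_2^{(2)};t)\ll (T^3G)^{2\#S_\infty}$ for $\pi_1,\pi_2\in\scrA(T,G)$. No trace formula, no Kloosterman sums, no Bessel analysis; and this is precisely where the ``unramified at every finite place'' hypothesis is cashed in (trivial finite conductor).

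\textbf{The amplifier lower bound.} This is the actual gap in your outline. You assert that the amplifier collapses to $V=\sum_{\bn}c(\bn)|\lambda_{\sym^2\pi_0}(\bn)|^2$ and then ``optimize $c$'', but never explain why $V$ can be made large. It cannot, using $\sym^2$ alone: since $\lambda_{\sym^2\pi_0}(q_v)=\lambda_{\pi_0}(q_v)^2-1$, this vanishes whenever $\lambda_{\pi_0}(q_v)=\pm1$, and nothing prevents this at most primes below $P$. The paper handles this with the dichotomy $|\lambda_{\pi_0}(q_v)|\ge\tfrac12$ or $|\lambda_{\pi_0}(q_v^2)|\ge\tfrac12$, splits $\scrP(T,\alpha)=\scrP_1\cup\scrP_2$ accordingly, picks $i\in\{1,2\}$ with $\#\scrP_i\ge\tfrac12\#\scrP$, and applies \emph{either} the $\pi$ large sieve \eqref{e:spec_large_sievei_n=2} (contributing $(TG)^{\#S_\infty}$) \emph{or} the $\sym^2\pi$ large sieve \eqref{e:spec_large_sievei_n=2_sym} (contributing $(T^3G)^{\#S_\infty}$). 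The amplifier is supported on products of $\ell\approx\frac{\beta\log T}{\alpha\log\log T}$ distinct primes from $\scrP_i$, so that $|\lambda_{\pi_0}(\vecm^i)|\ge 2^{-\ell}$ termwise and $V\ge\binom{\#\scrP_i}{\ell}4^{-\ell}$. The worst case $i=2$ is what forces $3\#S_\infty$ in the final exponent; Stirling plus the choices $\sigma=0$, $G=1$, $\beta=d-1+3\#S_\infty$ finish the bound.
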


We also establish the following generalization of Theorem \ref{thm:effective_Q} subject to Conjecture \ref{aque2}:
\begin{theorem}\label{thm:QUE-multone}
Assume Conjecture \ref{aque2}. Fix $\eta>0$ and $B>0$.
Take any irreducible cuspidal representations $\pi_1=\otimes_{v} \pi_{1, v}$, $\pi_2=\otimes_v \pi_{2, v}$ of $\PGL_2(\A)$ such that $\pi_{1, v}$ and $\pi_{2, v}$ are unramified for all places $v$ of $F$. 
We further assume that $\pi_{1, v}\cong \pi_{2, v}$ for all unramified places $v$ and 
let $t_v = t_{\pi_{1, v}}$ for $v\in S_{\infty}$. 
Assume that there exists $t>0$ such that $\frac{|t_v|}{t}\in [B^{-1}, B]$ for any $v\in S_\infty$.
We further assume that $\pi_{1, v}\cong \pi_{2, v}$ for non-archimedean places $v$ such that $N(q_v) \leq \eta t$. 

Then there exists $t_\eta>0$ such that if $t>t_\eta$ then $\pi_1$ and $\pi_2$ are isomorphic. 
\end{theorem}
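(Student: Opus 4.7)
My plan is to lift the strategy of Theorem~\ref{thm:effective_Q} to the adelic setting, substituting Conjecture~\ref{aque2} for the Brooks--Lindenstrauss classification of quantum limits. Arguing by contradiction, suppose $\pi_1 \not\cong \pi_2$; by strong multiplicity one the $L^2$-normalized spherical vectors $\phi_1, \phi_2 \in L^2(\X_F)$ are orthogonal. By hypothesis every linear combination $\psi = a\phi_1 + b\phi_2$ is a joint eigenfunction of the archimedean Laplacians (with parameters $t_v$ satisfying $|t_v|/t \in [B^{-1}, B]$) and of every Hecke operator $T_{q_v}$ with $N(q_v) \leq \eta t$, so Conjecture~\ref{aque2} applies throughout the family $\{\psi\}$.

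The first step extracts effective off-diagonal equidistribution. Varying $(a, b)$ in the weak-$*$ convergence $|\psi|^2 \, d\mu \to \|\psi\|^2 \cdot d\mu/\vol(\X_F)$ supplied by Conjecture~\ref{aque2}, and isolating cross terms in the expansion of $|a\phi_1+b\phi_2|^2$, one obtains
\[
\int_{\X_F} \phi_1 \overline{\phi_2} \, f \, d\mu \longrightarrow 0 \qquad (f \in C_c^\infty(\X_F))
\]
with a quantitative rate inherited from the effective form of Conjecture~\ref{aque2}. Testing $f$ against an incomplete Eisenstein series and applying Rankin--Selberg unfolding identifies the integral on the left with a Mellin-transform integral of the completed $L$-function $\Lambda(s, \pi_1 \times \tilde{\pi}_2)$; since $\pi_1 \not\cong \pi_2$ this $L$-function is entire, so the AQUE rate translates into a subconvex bound for $\Lambda(1/2, \pi_1 \times \tilde{\pi}_2)$.

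On the other hand, the hypothesis $\pi_{1,v} \cong \pi_{2,v}$ for $N(q_v) \leq \eta t$ combined with Hecke multiplicativity gives $\lambda_1(\mathfrak{n}) = \lambda_2(\mathfrak{n})$ for every integral ideal $\mathfrak{n}$ of norm at most $\eta t$, yielding the diagonal identity
\[
\sum_{N(\mathfrak{n}) \leq \eta t} \lambda_1(\mathfrak{n}) \overline{\lambda_2(\mathfrak{n})} \;=\; \sum_{N(\mathfrak{n}) \leq \eta t} |\lambda_1(\mathfrak{n})|^2 \;\gg_\eta\; t,
\]
where the lower bound comes from the pole of $L(s, \pi_1 \times \tilde{\pi}_1)$ at $s=1$. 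Inserting an Iwaniec--Sarnak amplifier of non-archimedean length $\sqrt{\eta t}$ into the Perron formula for $\Lambda(s, \pi_1 \times \tilde{\pi}_2)$ converts this diagonal lower bound into a lower bound for $|\Lambda(1/2, \pi_1 \times \tilde{\pi}_2)|$ that would exceed the subconvex upper bound of the previous paragraph, forcing a contradiction once the amplifier length is matched to $X = \eta t$. The main obstacle is extracting from Conjecture~\ref{aque2} a quantitative rate strictly stronger than convexity for $\Lambda(s, \pi_1 \times \tilde{\pi}_2)$, uniformly across the joint archimedean window $|t_v|/t \in [B^{-1}, B]$; a secondary technicality is isolating the cuspidal contribution from the Eisenstein spectrum in the spectral decomposition by Arthur-style adelic truncation, which is routine but unavoidable over a general number field.
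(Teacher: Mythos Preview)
Your approach has a genuine gap: Conjecture~\ref{aque2} is a \emph{qualitative} statement about the structure of weak-$\ast$ limit points (each is $c\cdot d\mu/\vol$ for some $c\in[0,1]$), not an effective equidistribution theorem. There is no rate to ``inherit,'' so you cannot extract any subconvex bound for $\Lambda(s,\pi_1\times\tilde\pi_2)$ from it; indeed the introduction already notes that subconvexity for this $L$-function would give a much stronger (power-saving) result than the theorem claims. Your own closing paragraph identifies precisely this obstacle, and it is fatal to the strategy as written. The amplifier step is also problematic: amplifiers are devices for proving \emph{upper} bounds on $L$-values, so converting the diagonal lower bound $\sum_{N(\mathfrak n)\le\eta t}|\lambda_1(\mathfrak n)|^2\gg t$ into a lower bound for $|\Lambda(1/2,\pi_1\times\tilde\pi_2)|$ via an amplifier is not a standard manoeuvre and you do not explain how it would go.

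The paper's argument is entirely soft and avoids $L$-values. One forms the normalized difference $f_j=(\phi_{1,j}-c_j\phi_{2,j})/\|\phi_{1,j}-c_j\phi_{2,j}\|_2$, where $c_j$ is chosen so that $\rho_{f_j}(\mathfrak a)=0$ for all $N(\mathfrak a)\le\eta t_j^{d_F}$; this is possible because agreement of Hecke eigenvalues forces proportionality of the low Fourier coefficients. Each $f_j$ is still a joint eigenfunction of the Laplacians and of enough Hecke operators for Conjecture~\ref{aque2} to apply. The vanishing of the low Fourier coefficients, together with the rapid decay of the archimedean Whittaker function past the transition range, forces $\int_K|f_j|^2\,dg\to 0$ for every fixed compact $K$ determined by $\eta$. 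This contradicts Proposition~\ref{prop:escape}, which upgrades Conjecture~\ref{aque2} to $c>0$ (no escape of mass) by playing $f_j$ off against a genuine Hecke--Maass form in the same joint eigenspace and invoking Theorem~\ref{aque1}. No quantitative input beyond the weak-$\ast$ classification is needed.
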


We note that Theorem \ref{thm:linnik} and Theorem \ref{thm:effective_Q} follow from Theorem \ref{thm:linnikF} and Theorem \ref{thm:QUE-multone} repectively by taking $F=\mathbb{Q}$.

\subsection{Sketch proofs}

The proofs of Theorem \ref{thm:linnik} and its generalization, Theorem \ref{thm:linnikF}, employ distinct methods from those used in the proofs of Theorem \ref{thm:effective_Q} and its generalization, Theorem \ref{thm:QUE-multone}: spectral large sieve inequalities and quantum unique ergodicity, respectively.
Consequently, we present separate proof sketches for each pair of theorems.

\subsubsection{Spectral large sieve inequalities}
We first recall a result similar to Theorem \ref{thm:linnik} obtained in \cite[Theorem 1]{DK} for the modular forms of the elliptic curves.
\begin{theorem}[\cite{DK}]\label{thm:DK}
The number of modular forms of weight $2$ corresponding to elliptic curves on $\mathbb{Q}$ of the conductor less than $Q$ which for every prime $p \leq (\log Q)^\alpha$ have a fixed number of points modulo $p$ is
\[
O_\epsilon(Q^{\frac{8}{\alpha}+\epsilon}).
\]
\end{theorem}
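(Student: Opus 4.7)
The plan is to apply a spectral large sieve inequality for weight $2$ Hecke newforms of conductor bounded by $Q$, together with a test vector supported on smooth squarefree integers. Set $P = (\log Q)^\alpha$ and let $\mathcal{F}$ denote the set of $N$ newforms in question. By Hecke multiplicativity, for any squarefree integer $n$ whose prime divisors all lie in $\{p \leq P\}$, the value $\lambda_f(n) = \prod_{p \mid n} a_p =: \lambda(n)$ is independent of $f \in \mathcal{F}$. Let $\mathcal{N}_X$ denote the set of such $P$-smooth squarefree integers $\leq X$, whose cardinality is governed by the Dickman function.

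The spectral large sieve for weight $2$ newforms of conductor $\leq Q$, derived from the Petersson trace formula, takes the shape
\[
\sum_{f \colon N_f \leq Q} \omega_f \left| \sum_n x_n \lambda_f(n) \right|^2 \ll (Q + X) \, \|x\|_2^2.
\]
With $x_n = \lambda(n) \mathbf{1}_{\mathcal{N}_X}(n)$, the inner sum equals $T := \sum_{n \in \mathcal{N}_X} \lambda(n)^2$ for all $f \in \mathcal{F}$, which gives $N \ll (Q + X)/T$ by positivity. To reach the target exponent $8/\alpha$, I would upgrade to a fourth-moment / symmetric-square version: since $\lambda_f(n)^2 = \lambda(n)^2$ is $f$-independent on $\mathcal{F}$ for $n \in \mathcal{N}_X$, expanding $\lambda_f(n)^2$ via the Hecke relations translates the inequality into a large sieve for $\sym^2 f$, whose analytic conductor is of size $Q^2$, and optimizing over $X$ gives the stated bound.

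The main obstacle will be to establish a lower bound $T \gg |\mathcal{N}_X|^{1-\epsilon}$, together with its fourth-moment analogue, which requires ruling out conspiracies among the prescribed eigenvalues $(a_p)$ that would make $\lambda(n)$ systematically small. Such a bound can be addressed by elementary multiplicative-function estimates, but the interplay between the Dickman-function decay of $|\mathcal{N}_X|$ and the choice of $X$ as an appropriate power of $Q$ must be optimized carefully. The factor of $8$ in the final exponent ultimately reflects the quadratic blow-up of the analytic conductor when passing from Hecke $L$-functions for $f$ to symmetric-square $L$-functions for $\sym^2 f$; the subsequent improvement in Theorem \ref{thm:linnik} can be expected to come from avoiding precisely this loss, by exploiting the fact that the Maass--Hecke forms under consideration are unramified everywhere.
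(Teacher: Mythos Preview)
Your outline is on the right track, but the step you flag as ``the main obstacle'' is in fact the heart of the argument, and your proposed resolution (``elementary multiplicative-function estimates'') is not what is actually done.  The device used in \cite{DK}, and recalled in the paper's sketch, is the Hecke identity $\lambda(p)^2-\lambda(p^2)=1$: for each prime $p\le (\log Q)^\alpha$ one of $|\lambda(p)|\ge\tfrac12$ or $|\lambda(p^2)|\ge\tfrac12$ must hold.  This partitions the primes into $\scrP_1,\scrP_2$, and at least one of them contains $\ge\tfrac12\pi((\log Q)^\alpha)$ primes.  If $\scrP_1$ is large one runs the standard $\GL_2$ large sieve with a test vector supported on products of $\ell$ distinct primes from $\scrP_1$; if $\scrP_2$ is large one instead runs the large sieve for the \emph{symmetric square} (a $\GL_3$ family), whose Dirichlet coefficients at primes are exactly the $\lambda(p^2)$.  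This is the reason the $\sym^2$ large sieve enters --- to salvage the case where the prescribed $\lambda(p)$ are all near zero --- not as a ``fourth-moment upgrade'' of the $\GL_2$ sieve.  Your framing via squaring $\lambda_f(n)^2$ does not produce the right object and would not resolve the conspiracy.

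A second, smaller point: the test vector in \cite{DK} is supported on the set $M(\ell,X)$ of squarefree integers with \emph{exactly} $\ell$ prime factors from $\scrP_i$, with $\ell\approx\beta\log Q/(\alpha\log\log Q)$ chosen so that $P^\ell\approx Q^\beta$.  Once the dichotomy guarantees $|\lambda(n)|\ge 2^{-\ell}$ on this support, the lower bound $T\ge 2^{-2\ell}\,\#M(\ell,X)$ is immediate, and $\#M(\ell,X)\ge\binom{\#\scrP_i}{\ell}$ is handled by Stirling.  Using all $P$-smooth squarefree integers and the Dickman function would work in principle but only after the dichotomy is in place; without it there is no lower bound on $T$ at all, since the fixed eigenvalue system $(a_p)$ could well have $a_p=0$ for many $p$.
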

To illustrate the idea of the proof of the theorem, let $S(Q)$ be the set of elliptic curves that have a conductor less than $Q$. 
Let us denote by $\lambda_E(n)$ the normalized $n$-th Hecke eigenvalue of the modular form corresponding to the elliptic curve $E$. 
Then, for a prime $p$, we have $1=\lambda_E(p)^2-\lambda_E(p^2)$ which implies that $\lambda_E(p)$ and $\lambda_E(p^2)$ cannot both be simultaneously close to $0$. Fix an elliptic curve $\tilde{E}$, and assume for simplicity that the Hecke eigenvalues $\{\lambda_{\tilde{E}}(p)\}_{p<X}$ are bounded away from $0$ by some positive constant. Say we are given an estimate of the form
\begin{equation} \label{eq:amp}
\sum_{E\in S(Q)} \left|\sum_{n<X} x_n \lambda_E(n)\right|^2 \ll  Q^\beta \sum_{n<X}|x_n|^2     
\end{equation}
for some $\beta>0$ that holds for arbitrary set of complex numbers $\{x_n\}_{n<X}$. Then denoting by $M(l,X)$ the set of squarefree integers less than $X$ having at most $l$ prime factors, one may choose $x_n = \lambda_{\tilde{E}}(n)$ for $n\in M(l,X)$, and $x_n=0$ otherwise, to obtain
\[
\#\{E\in S(Q)~:~ \lambda_E(p) = \lambda_{\tilde{E}}(p),~p<X\}\ll Q^\beta \left(\sum_{n\in M(l,X)}|\lambda_{\tilde{E}}(n)|^2 \right)^{-1}
\]
from the multiplicativity of $\lambda_E(n)$ and the positivity of the summands on the left hand side of \eqref{eq:amp}. Because we assumed that $\{\lambda_{\tilde{E}}(p)\}_{p<X}$ are bounded away from $0$ by some positive constant, say $c>0$, we may use the multiplicativity of the Hecke eigenvalues in order to bound the right hand side in terms of $l,X,Q,$ and $c$. By appropriately choosing these parameters, we obtain an upper bound for
\[
\#\{E\in S(Q)~:~ \lambda_E(p) = \lambda_{\tilde{E}}(p),~p<X\},
\]
which is the quantity appearing in Theorem \ref{thm:DK}. 

Estimates of the form \eqref{eq:amp} are called \textit{large sieve inequalities}, and \cite{DK} studies large sieve inequalities for the family of $\GL_n$ automorphic forms over $\mathbb{Q}$ that are identical at the archimedean places in the conductor (non-archimedean) aspect. 

This paper focuses on spectral large sieve inequalities, the large sieve inequalities that consider the weight aspect of families of automorphic forms with a fixed conductor. 
For $\GL_n$ automorphic forms over arbitrary number field $F$ that are unramified at every non-archimedean place, we prove Theorem \ref{thm:spec_large_sieve_n}.
For the case when $n=2$, 
we establish a spectral large sieve inequality for their symmetric squares (Theorem \ref{thm:spec_large_sieve_n=2_sym2}). 
Combining this with Theorem \ref{thm:spec_large_sieve_n}, we obtain Theorem \ref{thm:linnikF}.

Note that when $F=\mathbb{Q}$, we have the following spectral large sieve inequality for symmetric squares of Maass--Hecke cuspforms \cite{MR4635352}. 

\begin{theorem}[\cite{MR4635352}]\label{sieve2}
Let $\{\phi_j\}$ be the complete set of Maass--Hecke cuspforms on $\mathbb{X}$ and let 
\[
L(s,\mathrm{sym}^2, \phi_j) = \sum_{n=1}^\infty b_j(n) n^{-s}.
\]
Then, for $1 \leq G \leq T$, we have
\[
\sum_{T \leq t_j \leq T+G} \left|\sum_{n \leq N} x_n b_j(n) \right|^2 
\ll_\epsilon T^\epsilon N^\epsilon
\begin{cases}
NT^{1/2}+TG &N\leq T\\
GN+N^{3/2}& T\leq N\leq T^2\\ 
N^2 T^{-1} & T^2\leq N \end{cases} .
\]
\end{theorem}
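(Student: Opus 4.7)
The plan is to prove this spectral large sieve inequality via a Kuznetsov-type trace formula, using that the symmetric-square coefficients are essentially shifted squares of Hecke eigenvalues. First I would open the square to write
\[
S = \sum_{T \leq t_j \leq T+G} \Bigl|\sum_{n\leq N} x_n b_j(n)\Bigr|^2 = \sum_{m,n\leq N} x_m \bar x_n \sum_{T \leq t_j \leq T+G} b_j(m) b_j(n),
\]
and insert a non-negative smooth test function $h_{T,G}(t)$ that bounds the indicator of $[T,T+G]$ from below on this interval, is supported in $[T/2,2T]$, and whose derivatives satisfy $h_{T,G}^{(k)} \ll G^{-k}$, so that it is enough to estimate $\sum_j h_{T,G}(t_j) b_j(m) b_j(n)$.

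Second, I would convert to Hecke eigenvalues via $L(s,\mathrm{sym}^2,\phi_j) = \zeta(2s)^{-1}\sum_n \lambda_j(n^2) n^{-s}$, which expresses $b_j(n)$ as a short M\"obius-weighted combination of $\lambda_j(k^2)$. Combined with Hecke multiplicativity in the form $\lambda_j(a)\lambda_j(b)=\sum_{d\mid(a,b)} \lambda_j(ab/d^2)$, this reduces the spectral sum to a linear combination of standard GL$_2$ Kuznetsov sums $\sum_j h_{T,G}(t_j) \lambda_j(r)$ where $r$ is built from $m$, $n$, and auxiliary divisors. I would then apply the Bruggeman--Kuznetsov formula to each such sum, producing a diagonal contribution (supported on $r=1$, controlling the $TG$ piece of each regime) and a Kloosterman transform $\sum_c c^{-1} S(1,r;c)\phi_{T,G}(2\sqrt r/c)$ with $\phi_{T,G}$ the Bessel transform of $h_{T,G}$.

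Third, I would estimate by regime, cut according to the analytic conductor scale $r\sim T^2$ of $\mathrm{sym}^2$. In the range $N\leq T$ (so $r \ll T^2$), the Bessel transform is oscillatory with stationary phase near $c\sim \sqrt r/T$, which after Weil's bound for $S(1,r;c)$ and Cauchy--Schwarz in $(m,n)$ yields the $NT^{1/2}$ contribution. For $N\geq T^2$, the Bessel transform is essentially negligible and the relevant estimate reduces to applying the Rankin--Selberg bound $\sum_{n\leq N} |b_j(n)|^2 \ll N t_j^\epsilon$ to each spectral parameter, producing $N^2/T$ by Cauchy--Schwarz once one uses the Weyl law for the size of the spectral window. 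The intermediate regime $T\leq N\leq T^2$ I would handle by interpolation between these two estimates, together with a direct Kloosterman estimate in which the gain $N^{3/2}$ (rather than the trivial $N^2$) comes from using an additional average over the Kloosterman modulus $c$ together with Weil's bound.

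The main obstacle is the Kloosterman contribution in the intermediate range $T \leq N \leq T^2$: obtaining the sharp $N^{3/2}$ term requires not only Weil's bound but genuine cancellation in the sum over $c$ after the Bessel transform $\phi_{T,G}$ changes behavior near $c\sim \sqrt{r}/T$. Careful stationary-phase analysis of $\phi_{T,G}$, uniform in the parameters $T$, $G$, and $r$, together with a dyadic decomposition in $c$ to properly exploit the oscillation of the Bessel kernel, is where most of the technical work lies.
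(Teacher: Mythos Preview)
The paper does not prove this theorem: it is quoted verbatim from \cite{MR4635352} and used only for comparison with the paper's own (weaker but more general) estimate \eqref{siiieve}. So there is no proof in the paper to compare your proposal against.

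That said, your sketch follows the expected Kuznetsov-based route for such spectral large sieve inequalities, and the regime splitting at $N\sim T$ and $N\sim T^2$ is natural. One slip: the identity relating $b_j(n)$ to $\lambda_j(n^2)$ goes the other way, namely $L(s,\mathrm{sym}^2\phi_j)=\zeta(2s)\sum_n \lambda_j(n^2)n^{-s}$, so $b_j(n)=\sum_{d^2 m=n}\lambda_j(m^2)$ rather than a M\"obius-inverted expression; this does not change the strategy but does affect which direction the reduction runs. More substantively, your treatment of the range $N\geq T^2$ is too loose: simply combining Weyl's law with the individual Rankin--Selberg bound $\sum_{n\leq N}|b_j(n)|^2\ll N$ and Cauchy--Schwarz gives at best $TG\cdot N$, not $N^2/T$, so that regime also needs the Kuznetsov/Kloosterman analysis (or a duality argument) rather than the trivial estimate you describe. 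The genuine work, as you identify, is the uniform stationary-phase analysis of the Bessel transform across the transition $c\sim\sqrt{r}/T$.
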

Theorem \ref{thm:spec_large_sieve_n=2_sym2} when $\sigma=0$ and $F=\mathbb{Q}$ reads:
\begin{equation}\label{siiieve}
\sum_{T \leq t_j \leq T+G} \left|\sum_{n \leq N} x_n b_j(n) \right|^2 \ll_\epsilon T^\epsilon N^\epsilon (N+T^{4}G^{2}) \sum_{n \leq N} |x_n|^2.
\end{equation}
While Theorem \ref{sieve2} is stronger than this estimate in a large range $N<T^\frac{5}{2}G$, it does not improve Theorem \ref{thm:linnik}, because the saving we get from the arithmetic argument is by the factor of $\sim N^{1-\frac{1}{\alpha}}$. 

Also, note that the method of \cite{DK} yields
\begin{equation}\label{siiieve2}
\sum_{T \leq t_j \leq T+G} \left|\sum_{n \leq N} x_n b_j(n) \right|^2 \ll_\epsilon T^\epsilon N^\epsilon (N+N^{1/2}T^{5/2}G^{3/2}) \sum_{n \leq N} |x_n|^2,
\end{equation}
Note that \eqref{siiieve} is stronger than \eqref{siiieve2} in the range $TG \ll N\ll T^5G^3$. For instance, if we use \eqref{siiieve2} instead of \eqref{siiieve}, then the upper bound we get for Theorem \ref{thm:linnik} is
\[
O(T^{\frac{5}{\alpha}+\epsilon}),
\]
which is slightly weaker than what we prove. 

The improvement of \eqref{siiieve} over \eqref{siiieve2} comes from the fact that we are exploiting the assumption of $\pi$ being unramified everywhere (or $\phi$ being a form on $\PGL_2(\mathbb{Z})\backslash \mathbb{H}$). 
This makes the computation involving the corresponding automorphic $L$-functions simpler and nicer, and that allows us to make use of the region $0<\mathrm{Re}(s)<\frac{1}{2}$ beyond the critical line.

\subsubsection{Quantum Unique Ergodicity and multiplicity one}
In the subsequent sections, we are going to present only the proof for Theorem \ref{thm:QUE-multone}, as Theorem \ref{thm:effective_Q} is a specialization of the theorem to $F=\mathbb{Q}$. (Note that Conjecture \ref{aque2} when $F=\mathbb{Q}$ is \cite{MR3260861}.) However, for this section, we illustrate the idea of the proof for Theorem \ref{thm:effective_Q}. A key ingredient in the proof of Theorem \ref{thm:effective_Q} is the equidistribution theorem of Brook and Lindenstrauss  \cite{MR3260861} which recovers the result of \cite{lin} using only finitely many Hecke operators, which we state as follows:
\begin{theorem}[\cite{MR3260861}]\label{linalt}
Fix a finite set of primes $S$ and let $\{f_j\}$ be a sequence of $L^2(\X)$ normalized joint eigenfunctions of $\Delta$ and $\{T_p\}_{p\in S}$ with strictly increasing Laplacian eigenvalues. Then any weak-$\ast$-limit of
\[
|f_j (z)|^2d\mu(z)
\]
is of the form
\[
cd\mu(z)
\]
for some constant $c\in [0,1]$.
\end{theorem}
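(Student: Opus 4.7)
The plan is to follow the measure-rigidity strategy for arithmetic quantum unique ergodicity, adapted so that only finitely many Hecke operators are needed. Write $Y = \PGL_2(\Z)\backslash \PGL_2(\R)$, so that $\X = Y/\PSO(2)$, and let $A$ be the diagonal subgroup $\{\diag(e^{t/2},e^{-t/2}):t\in\R\}$, whose right-action on $Y$ is the geodesic flow. The first step is the microlocal lift: for each joint eigenfunction $f_j$, build a pseudodifferential lift $\tilde\mu_j$ on $Y$ whose push-forward to $\X$ is asymptotically $|f_j(z)|^2\,d\mu(z)$. Passing to a subsequence, $\tilde\mu_j \rightharpoonup \tilde\mu$ weak-$*$. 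A standard Egorov-type computation, using that the Laplace eigenvalues tend to infinity, shows that $\tilde\mu$ is $A$-invariant; it suffices to prove $\tilde\mu = c\,m_Y$ for Haar $m_Y$, since pushing down gives the claim.

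Next I would install the arithmetic symmetry. For each $p\in S$, the operator $T_p$ corresponds to a correspondence on $Y$ coming from the action of $\PGL_2(\Q_p)$ on the $p$-adic Bruhat--Tits tree $\mathcal{T}_p$, and the eigenfunction relation $T_pf_j = \lambda_{f_j}(p)f_j$ passes to a Hecke-recurrence statement for $\tilde\mu$: on the adelic (or $S$-arithmetic) double cover $Y_S = \PGL_2(\Z[1/S])\backslash \prod_{v\in S\cup\{\infty\}}\PGL_2(\Q_v)/\prod_{v\in S}\PGL_2(\Z_v)$, the lift of $\tilde\mu$ is recurrent for the $\PGL_2(\Q_p)$-action at each $p\in S$ in the sense of Lindenstrauss.

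The main obstacle, which is the real content of \cite{MR3260861}, is to produce a positive lower bound for the entropy $h_{\tilde\mu}(A)$ using only the finitely many primes in $S$. My plan is to adapt the Bourgain--Lindenstrauss entropy estimate: fix $p\in S$ and partition small neighborhoods of points in $Y$ into atoms that are transverse to the $A$-orbit. An upper bound on the mass of each atom translates directly into a lower bound on $h_{\tilde\mu}(A)$. The mass of an atom can in turn be controlled by an $L^\infty$-type inequality for Hecke eigenfunctions: combining the Hecke relation $T_{p^k}f_j = \sum_{0\le j\le k}\lambda_{f_j}(p^{k-2j})f_j$ with a combinatorial count of geodesic segments in $Y_S$ that become close after travelling down the tree $\mathcal{T}_p$, one obtains a uniform-in-$j$ bound on the measure of dynamical balls that is strictly better than the trivial one. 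Taking $j\to\infty$ converts this uniform bound into the claim $h_{\mu}(A)\ge c(p)>0$ on almost every $A$-ergodic component.

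Once positive $A$-entropy is combined with $T_p$-recurrence, I apply Lindenstrauss's measure classification theorem \cite{lin}: any $A$-invariant probability measure on $Y_S$ that is $T_p$-recurrent and has positive $A$-entropy on almost every ergodic component is a scalar multiple of Haar measure. Pushing down to $\X$ gives $\mu_\infty = c\,d\mu(z)$. The inequality $c\le 1$ is immediate from lower semicontinuity of the mass under weak-$*$-limits together with $\|f_j\|_2=1$; the possibility $c<1$ corresponds to escape of mass into the cusp, which cannot be ruled out by these arguments alone. The hardest step is clearly the entropy lower bound: adapting Bourgain--Lindenstrauss so that the combinatorial counting on $\mathcal{T}_p$ requires only the action of one (or finitely many) $T_p$, rather than all Hecke operators simultaneously, is the technical heart of the argument.
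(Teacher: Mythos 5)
This statement is quoted directly from Brook--Lindenstrauss \cite{MR3260861}; the paper does not give a proof but simply cites the reference. Your sketch is a faithful high-level outline of exactly the strategy that reference uses: microlocal lift to an $A$-invariant measure on $\PGL_2(\Z)\backslash\PGL_2(\R)$, Hecke recurrence on the corresponding $S$-arithmetic quotient, an entropy lower bound obtained from a single Hecke correspondence (this is indeed the technical novelty over Bourgain--Lindenstrauss, which used all primes simultaneously), and finally Lindenstrauss's measure classification theorem to conclude that the limit is a constant multiple of Haar measure. The inequality $c\le 1$ and the need to leave open $c<1$ (escape of mass to the cusp) are also exactly as in the source. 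You have correctly identified the part that cannot be carried out in a blind sketch, namely the single-prime entropy bound, which occupies most of \cite{MR3260861}. Since the paper treats this as an external input, there is nothing in the paper's argument to compare against beyond the accuracy of the citation, and your description is consistent with it.
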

Note that we are using only finitely many Hecke operators here, which is critical for the proof to work. For our application, we need to exclude the possibility that all the mass escapes to the non-compact part of $\X$.
\begin{lemma}[Proposition \ref{prop:escape} when $F=\mathbb{Q}$]\label{lem1}
Fix $\eta>0$. Let $\{f_j\}$ be a sequence of $L^2(\X)$ normalized functions such that $f_j$ is a joint eigenfunction of $\Delta$ and $\{T_n\}_{n<N_j}$ with the eigenparameter $t_j$, and $N_j=\eta t_{j}$. Here we assume that $t_j \to +\infty$ as $j \to +\infty$. Then any weak-$\ast$-limit of
\[
|f_j (z)|^2d\mu(z)
\]
is of the form
\[
cd\mu(z)
\]
for some constant $c\in (0,1]$.
\end{lemma}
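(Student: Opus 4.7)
The plan is to separate the lemma into two claims: (a) any weak-$\ast$ subsequential limit $\nu$ of $|f_j|^2\,d\mu$ is of the form $c\,d\mu$ on $\X$ with $c\in[0,1]$; and (b) $c>0$. Claim (a) is essentially the output of Theorem~\ref{linalt}. For any fixed prime $p$, one has $p<N_j$ for all sufficiently large $j$, so $T_p f_j=\lambda_j(p)f_j$ along the tail. A diagonal extraction then makes $\lambda_j(p)$ converge for every prime $p$, and the limit measure $\nu$ is consequently invariant under the Hecke correspondence at every prime. Applying Theorem~\ref{linalt} with any single prime, together with an exhaustion of $\X$ by compact subsets, forces $\nu = c\,d\mu$ on $\X$ for some $c\in[0,1]$.

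The real content is claim (b), the no-escape-of-mass statement. Set $M_j(Y):=\int_{\mathcal F\cap\{y\geq Y\}}|f_j|^2\,d\mu$ for a standard fundamental domain $\mathcal F$. The aim is $M_j(Y_0)\leq 1-\delta_0$ for some $Y_0,\delta_0>0$ depending only on $\eta$ and all large $j$. Using the Whittaker--Fourier expansion
\[
f_j(x+iy)=\rho_j(1)\sqrt{y}\sum_{n\neq 0}\frac{\lambda_j(|n|)}{\sqrt{|n|}}K_{it_j}(2\pi|n|y)\,e(nx),
\]
Parseval gives
\[
M_j(Y)=2|\rho_j(1)|^2\sum_{n\geq 1}\frac{|\lambda_j(n)|^2}{n}\int_Y^\infty|K_{it_j}(2\pi ny)|^2\,\frac{dy}{y}.
\]
The exponential decay $|K_{it}(u)|\ll e^{-u/2}$ for $u\geq(1+\delta)t$ truncates the effective range to $n\leq(1+\delta)t_j/(2\pi Y)$. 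Choosing $Y_0=Y_0(\eta)$ with $(1+\delta)/(2\pi Y_0)<\eta$ places every surviving mode inside $[1,N_j)$, so all relevant $\lambda_j(n)$ are fixed by the hypothesis and the Hecke multiplicativity is available.

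Combining the Hoffstein--Lockhart estimate $|\rho_j(1)|^2\asymp\cosh(\pi t_j)/L(1,\mathrm{sym}^2 f_j)$ with the Rankin--Selberg mean value $\sum_{n\leq X}|\lambda_j(n)|^2\asymp L(1,\mathrm{sym}^2 f_j)\cdot X$, the $L$-value cancels, and together with the oscillatory bound $|K_{it}(u)|^2\ll(t^2-u^2)^{-1/2}$ for $u\leq(1-\delta)t$ one obtains $M_j(Y_0)\leq 1-\delta_0$ uniformly in large $j$. Setting $\nu_j:=|f_j|^2\,d\mu$ and $K:=\mathcal F\cap\{y\leq Y_0\}$ (closed with compact closure in $\X$), Portmanteau's theorem gives $c=\nu(\X)\geq\nu(K)\geq\limsup_j\nu_j(K)\geq\delta_0>0$, as required.

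The main obstacle is arranging the archimedean constants in the Bessel/eigenvalue bookkeeping so that $M_j(Y_0)$ is genuinely strictly less than $1$ (not merely $O(1)$), uniformly in $j$. A Soundararajan-style alternative avoids the Hoffstein--Lockhart input: from $\sqrt p\,\lambda_j(p)f_j(z)=f_j(pz)+\sum_{b\,\bmod\, p}f_j((z+b)/p)$, squaring and integrating over $\{y\geq Y\}$ yields the functional inequality $|\lambda_j(p)|^2 M_j(Y)\leq 2M_j(pY)+2M_j(Y/p)$; iterating this together with the Hecke relation $\lambda_j(p)^2-\lambda_j(p^2)=1$ (which forces some small prime power to have a non-negligible eigenvalue) directly produces the bound $M_j(Y_0)\leq 1-\delta_0$.
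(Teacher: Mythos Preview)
Your reduction (a) via Theorem~\ref{linalt} is fine and matches the paper. The problem is in (b).

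Your first route applies the Hoffstein--Lockhart bound $|\rho_j(1)|^2\asymp\cosh(\pi t_j)/L(1,\mathrm{sym}^2 f_j)$ and the Rankin--Selberg mean value to $f_j$. Neither is available: $f_j$ is only a Hecke eigenfunction for $T_n$ with $n<\eta t_j$, so it has no symmetric-square $L$-function, and there is no reason for $\rho_j(1)$ to be controlled by anything. In fact the decisive case for the application is exactly when $\rho_j(1)=0$ (the normalized difference of two Hecke--Maass forms agreeing on the first $\eta t_j$ Hecke eigenvalues has \emph{all} Fourier coefficients below $\eta t_j$ equal to zero). In that situation your displayed expansion and the subsequent cancellation of $L(1,\mathrm{sym}^2)$ collapse entirely. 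More generally, the Hecke eigenvalues $\lambda_j(n)$ for $n<\eta t_j$ coincide with those of some genuine Hecke--Maass form $\phi$, so the Rankin--Selberg side produces $L(1,\mathrm{sym}^2\phi)$; but $\rho_j(1)$ is the first coefficient of $f_j$, not of $\phi$, and there is no mechanism forcing these to cancel.

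Your Soundararajan-style alternative does not rescue this. The inequality you write is off by a factor of $p$ (the change of variables $z\mapsto pz$ maps the width-$1$ strip to a width-$p$ strip, giving $pM_j(pY)$ rather than $M_j(pY)$), and even with the correct inequality it is unclear how a bounded number of iterations extracts $M_j(Y_0)\le 1-\delta_0$. Soundararajan's actual no-escape argument goes through the Rankin--Selberg identity and a lower bound for $\sum_{n\le X}|\lambda(n)|^2$, i.e.\ again through $L$-function input specific to full Hecke eigenforms.

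The paper's proof supplies the missing idea: it does not attempt to bound the cusp mass of $f_j$ directly. Instead it picks a genuine Hecke--Maass cusp form $\phi_j$ in the same joint eigenspace, so that $\rho_{f_j}(n)=c_j\rho_{\phi_j}(n)$ for $n<\eta t_j$. Full QUE (Lindenstrauss together with Soundararajan's no-escape, i.e.\ Theorem~\ref{aque1} for $F=\mathbb{Q}$) applies to $\phi_j$. Assuming $c=0$ for the sequence $f_j$, the Rankin--Selberg/incomplete Eisenstein computation of Lemma~\ref{lem:grs} forces $|c_j|=o(1)$; then the difference $f_j-c_j\phi_j$ has vanishing Fourier coefficients below $\eta t_j$, and the Bessel-tail estimate (the analogue of your truncation, but now legitimate because one has the average Rankin--Selberg control on the coefficients) gives $\|f_j-c_j\phi_j\|_2=o(1)$, contradicting $\|f_j\|_2=1$. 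The essential input you are missing is this comparison with an auxiliary full Hecke eigenform, which is what lets one import the known no-escape-of-mass.
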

\begin{remark}
When $f_j$ is a joint eigenfunction of $\Delta$ and \textit{all} Hecke operators, i.e., if $f_j$ is a Maass--Hecke cuspform, it is known that the only possible value of $c$ is $1$ \cite{sou}. When $F$ is an arbitrary number field, this is proven in \cite{Z10}.
\end{remark}
To prove Theorem \ref{thm:effective_Q}, we proceed by contradiction. Suppose there exist infinitely many eigenparameters $t_j$ such that there are at least two distinct cusp forms with the same Hecke eigenvalues $\lambda_\phi(n)$ for all $n < \eta t_j$. Since the Fourier coefficients at $i\infty$ of a Maass--Hecke cusp form are proportional to its Hecke eigenvalues, we can construct a non-zero cusp form by taking a linear combination of these two cusp forms, such that its Fourier coefficients vanish for all $n < \eta t_j$. This linear combination remains a joint eigenfunction of $\Delta$ and $T_n$ for $n < \eta t_j$, and thus its $L^2$ mass should equidistribute as $t_j \to \infty$ according to Theorem \ref{linalt}. However, the vanishing Fourier coefficients for $n < \eta t_j$ imply that this function has arbitrarily small mass in a compact region determined by $\eta$, contradicting Lemma \ref{lem1}.

\section{Preliminaries}\label{ss:notation}

Let $F$ be a number field and $\bo_F$ its ring of integers. 
For each place $v$ of $F$, we denote $F_v$ the completion of $F$ at $v$ with respect to the normalized valuation $|\cdot|_v$. 
We reserve $|\cdot|$ for the uaual absolute value of real or complex numbers. 
Let $S_F$ denote the set of all places of $F$, 
$S_\infty$ the set of all archimedean places of $F$ 
and $S_{\finite}$ be the set of all non-archimedean places of $F$. 
We further separate $S_\infty = S_{\R}\cup S_{\C}$ where 
\[S_{\R} = \{v\in S_\infty:\, F_v=\R\}\quad \text{ and } \quad S_{\C} = \{v\in S_{\infty}:\, F_v=\C\}.\]
We give explicit description of the absolute valuation $|\cdot|_v$ for each place $v$. 
For $v\in S_\infty$, 
\[|x|_v = |x|^{\epsilon_v+1} \quad\text{ where } \epsilon_v = \begin{cases}0 & \text{ if } F_v=\R, \\ 1 & \text{ if } F_v=\C.\end{cases}
\]
For $v\in S_{\finite}$, we let $\bo_v$ denote the ring of integers of $F_v$, $\bp_v$ the unique maximal ideal and $q_v$ the cardinality of $\bo_v/\bp_v$. 
We let $\varpi_v\in \bo_v$ be the uniformizer so $\bp_v=\left<\varpi_v\right>$ and 
\[|\varpi_v|_v = q_v^{-1}.\]
We let $\bo_v^\times=\{u\in \bo_v:\, |u|_v=1\}$ be the multiplicative group of units in $\bo_v$.

We let $\A=\A_F=\prod_v' F_v$ (with respect to $\bo_v$ for $v\in S_{\finite})$ be the ring of ad\'eles over $F$
and $\A^\times = \A_F^\times=\prod_v' F_v^\times$ (with respect to $\bo_v^\times$ for $v\in S_{\finite}$) be the multiplicative group of id\'eles of $F$. 
We further let $F_\infty=\prod_{v\in S_\infty} F_v$ and $\A_{\finite} = \prod_{v\in S_{\finite}}' F_v$ be the ring of finite ad\'eles of $F$.
We regard $F$ as a subring of $\A$. 
Following \cite{gj}, we define 
\begin{equation*}
F_\infty^+ = \left\{y\in \A^\times:\, y_v=u\text{ for all } v\in S_\infty \text{ for some } u>0 \text{ and } y_v=1\text{ for all } v\in S_{\finite}\right\}
\end{equation*}
and 
\begin{equation*}
\A^1 = \left\{y\in \A^\times :\, \|y\|=1\right\}.
\end{equation*}
Then we have the isomorphism 
$F^\times\backslash \A^\times 
\cong F_\infty^+\cdot (F^\times\backslash\A^1)$.

For $x=\{x_v\}_{v}=\{x_v\}_{v\in S_F}\in \A$, we denote by $\|x\| = \prod_v |x_v|_v$ 
the canonical absolute value on $\A^\times$. 
This absolute value is normalised, so that $\|\alpha\|=1$ for any $\alpha\in F^\times$. 
For $x\in F_\infty$, we let $\|x\|_\infty = \prod_{v\in S_\infty} |x_v|_v$ 
and for $x\in \A_{\finite}$, we let $\|x\|_{\finite} = \prod_{v\in S_{\finite}} |x_v|_v$. 

For any $\alpha\in \A^\times$, we write $(\alpha)$ to denote the fractional ideal 
\begin{equation*}
(\alpha) = \prod_{v< \infty} (\bp_v\cap F)^{\ord_v(\alpha_v)}.
\end{equation*}
Let $h$ be the order of the class group of $F$. 
We fix $t_1, \ldots, t_h\in \A_{\finite}^\times$ such that the fractional ideals $(t_j) = \prod_{v<\infty}(\bp_v\cap F)^{\ord_v(t_{j, v})}$ represent the ideal classes of $F$. 

Following \cite[\S1]{BK11}, we choose additive and multiplicative Haar measures for each place $v$ in the following way. 

We fix an additive character $\psi=\prod_v \psi_v$ of $F\backslash \A$ whose conductor is the inverse different $\bd^{-1}$ of $F$ so $\psi = \psi_{\Q} \circ \tr$ where $\psi_\Q$ is the additive character of $\Q\backslash \A_{\Q}$ which is unramified at all finite primes and whose restriction to $\R$ is the exponential function $e(x) = e^{2\pi ix}$ and $\tr:\A_F\to \A_\Q$ is the trace map. 

For $v\in S_{\infty}$, we let $d_vx$ be the ordinary Lebesgue measure 
and $d_v^\times x$ be the multiplicative Haar measure on $F_v^\times$ such that 
\[d_v^\times x = \begin{cases}
\frac{d_vx}{2|x|_v} & \text{ when } v\in S_{\R}, \\
\frac{d_vx}{\pi |x|_v} = \frac{d_vx}{\pi|x|^2} & \text{ when } v\in S_{\C}. 
\end{cases}\]
Then the Mellin inversion formula at an archimedean place $v$ takes the form 
\[\tilde{f}(s, \epsilon) = \int_{F_v^\times} f(x)\sgn^\epsilon(x) |x|_v^{s-\frac{1}{2}} \, d_v^\times x
\iff f(x) = \sum_{\epsilon\in \{0, 1\}} \frac{1}{2\pi i } \int_{(\sigma)} \tilde{f}(s, \epsilon) \sgn^\epsilon(x) |x|_v^{-s+\frac{1}{2}} \, ds \]
when $F_v=\R$ and 
\[\tilde{f}(s, k) = \int_{F_v^\times} f(x) (x|x|^{-1})^{k} |x|_v^{s-\frac{1}{2}} \, d_v^\times x\iff f(x) = \sum_{k\in \Z} \frac{1}{2\pi i} \int_{(\sigma)} \tilde{f}(s, k) (x|x|^{-1})^{-k} |x|_v^{-s+\frac{1}{2}} \, ds\]
when $F_v=\C$.
Here each $s$-integral is taken along a vertical line $\Re(s)=\sigma$ for $\sigma\in \R$, to the right of any poles of the integrand. 

For $v\in S_{\finite}$, we let $d_vx$ be the additive Haar measure on $F_v$ such that 
\[\vol(\bo_v) = \int_{\bo_v}1\, d_vx = 1.\]
We define the multiplicative Haar measure on $F_v^\times$ as 
\[d_v^\times x = \frac{d_vx}{(1-q_v^{-1})|x|_v}.\]
Then we get
\[\int_{\bo_v^\times} 1\, d_v^\times x = 1.\]

On $\A$, the additive Haar measure is given as $dx = \prod_v d_vx_v$ which is self-dual, 
and the multiplicative Haar measure on $\A^\times$ is given as $d^\times x = \prod_v d_v^\times x_v$.

Let $\G$ denote the group $\GL_2$. 
The center $\CentZ$ of $\G$ is $\CentZ = \left\{\sm a & \\ & a\esm\right\}$ and we set $\bar{\G} = \CentZ \backslash \G\cong \PGL_2$. 
We also introduce the subgroups: 
\begin{equation*}
\BorelB = \left\{\bpm * & * \\ 0 & * \ebpm \right\}
\quad\text{ and } \quad 
\N = \left\{\bpm 1 &* \\ 0 & 1\ebpm \right\}.
\end{equation*}
We also write, $n(x) = \sm 1 & x\\ 0 & 1\esm \in \N$ and $a(y) = \sm y & \\ & 1\esm$. 
By the Iwasawa decomposition, any element $g\in \bar{\G}(F_v)$ can be represented as 
$n(x)a(y)\kappa$ for $x\in F_v$, $y\in F_v^\times$ and $\kappa\in \K_v$ for each place $v$ of $F$.
We can write a Haar measure $d_vg$ on $\bar{\G}(F_v)$ as
\[\int_{\bar{\G}(F_v)} f(g)\, d_v g
= \int_{\K_v} \int_{F_v^\times} \int_{F_v} f(n(x) a(y) \kappa) \, d_v x\frac{d_v^\times y}{|y|_v} d_v\kappa.\]
Similarly we select measures on $\bar{\G}(\A)$ and $\maxK$ such that 
\begin{equation*}
\int_{\bar{\G}(\A)} f(g)\, dg = \int_{\maxK} \int_{\A^\times} \int_{\A} f(n(x)a(y)\kappa)\, dx\, \frac{d^\times y}{\|y\|} \, d\kappa.
\end{equation*}
Here $d\kappa=\prod_v d_v\kappa_v$.

\subsection{Automorphic functions on \texorpdfstring{$\bar{\G}(F)\backslash \bar{\G}(\A)$}{G(F)G(A)}}

Let $\X_F = \bar{\G}(F)\backslash \bar{G}(\A)/\maxK$. 
We let $L^2(\X_F)$ be the space of functions $f$ on $\X_F$ such that 
\begin{equation*}
\|f\|_2^2 = \int_{\bar{\G}(F)\backslash \bar{\G}(\A)}|f(g)|^2\, dg < +\infty. 
\end{equation*}
We say that $f$ is cuspidal if 
\begin{equation*}
\int_{F\backslash \A} f(n(x)g) \, dx = 0.
\end{equation*}
Then $f$ has the following Fourier expansion: 
\begin{equation*}
f(g) = \sum_{\alpha\in F^\times} \widehat{f}\left(\bpm \alpha & \\ & 1\ebpm g\right) 
\end{equation*}
where 
\begin{equation*}
\widehat{f}(g) = \int_{F\backslash \A} f(n(x) g) \psi(-x)\, dx.
\end{equation*}
By the Iwasawa decomposition, for $g=n(x)a(y)\kappa$ with $x\in \A$, $y\in \A^\times$ and $\kappa\in \maxK$, since $f$ is right $\maxK$-invariant
\begin{equation*}
\widehat{f}(n(x)a(y)\kappa) = \psi(x)\widehat{f}(a(y)). 
\end{equation*}

We define the Laplace--Beltrami operators $\Delta_{F_v}$.  
On $\H^2=\bar{\G}(\R)/\PO(2)$
\begin{equation*}
\Delta_{\R} = y^2\left(\frac{\partial^2}{\partial x^2}+\frac{\partial^2}{\partial y^2}\right)
\end{equation*}
with the parameters $g=n(x)a(y)\kappa$ for $x\in \R$, $y>0$ and $\kappa\in \PO(2)$.
On $\H^3=\bar{\G}(\C)/\PU(2)$, 
\begin{equation*}
\Delta_{\C} = y^2\left(\frac{\partial^2}{\partial x_1^2}+\frac{\partial^2}{\partial x_2^2} + \frac{\partial^2}{\partial y^2}\right) - y\frac{\partial}{\partial y}, 
\end{equation*}
with the parameters $g=n(x_1+ix_2)a(y) \kappa$ for $x_1, x_2\in \R$, $y>0$ and $\kappa\in \PU(2)$.

Assume that $f$ is an eigenfunction of Laplace--Beltrami operators for all archimedean places of $F$: for any archimedean place $v$, 
\begin{equation*}
-\Delta_{F_v} f = \lambda_v(f) f 
\end{equation*}
and we write
\begin{equation*}
\lambda_v(f) = (1+\epsilon_v)^2 \left(\frac{1}{4}+t_{f, v}^2\right). 
\end{equation*}
Then the Laplace--Beltrami operator $\Delta$ on $\X_F$ is
\[
\Delta = \sum_{v\in S_\infty } \Delta_{F_v}
\]
and the eigenvalue corresponding to $f$ is $\lambda_f =  \sum_{v\in S_\infty } \lambda_v(f) $. We let $t_f\geq 0$ be defined by $\lambda_f = t_f^2$.
When $f$ is cuspidal and eigenfunction of Laplace-Beltrami operator $\Delta$ then we say that $f$ is a Maass cusp form. 
Following \cite[\S4.2]{BK11}, we have 
\begin{equation*}
\widehat{f}(a(\{y_\infty, y_{\finite}\})) 
= \int_{F\backslash \A} f(n(x)a(\{y_\infty, y_{\finite}\}))\psi(-x) \, dx
= W_{f, \infty}(y_\infty)\cdot \rho_{f}(y_{\finite})
\end{equation*}
for $y=\{y_\infty, y_{\finite}\}\in \A^\times$ such that $y_{\infty}\in F_\infty^\times$ and $y_{\finite}\in \A_{\finite}^\times$.
Here $\rho_f(y_{\finite})\in \C$ and 
$W_{f, \infty} = \prod_{v\mid \infty}W_{f, v}$, such that for $y\in F_v^\times$ for each archimedean place $v$, 
\begin{equation*}
W_{f, v} (y) = \begin{cases}
\sqrt{|y|} K_{it_{f, v}}(2\pi |y|) & \text{ when } F_v=\R, \\
|y|K_{2it_{f, v}}(4\pi|y|) & \text{ when } F_v=\C. 
\end{cases}
\end{equation*}

Take $u\in \A$ such that $n(u)\in \maxK$, which implies that $u=\{u_v\}_v$, $u_v\in \bo_v$ for all non-archimedean place $v$ and $u_v=0$ for all archimedean place $v$. 
For any $y=\{y_v\}\in \A^\times$, 
\begin{equation*}
\widehat{f}(a(y)) = \widehat{f}(a(y)n(u)) = \widehat{f}(n(yu) a(y))
= \psi(yu) \widehat{f}(a(y)).
\end{equation*}
So $\widehat{f}(a(y))\neq 0$ implies that $y_v\in \bd_v^{-1}$. 
Therefore $\rho_f(y)=\rho_f(y_{\finite})=0$ unless $(y) = (y_{\finite})\subset \bd_F^{-1}$. 
Then we get 
\begin{equation*}
f(n(x)a(y)\kappa)
= \sum_{\substack{\alpha\in F^\times\\ (\alpha y)\in \bd^{-1}}} \rho_f(\alpha y) \psi(\alpha x) W_{f, \infty}(\alpha y_\infty).
\end{equation*}
Taking $x=0$, by \cite[(4.7)]{BK11}, we get
\begin{equation*}
f(a(y)\kappa)
= \sum_{\alpha\in \bo_F^\times\backslash ((y)^{-1} \bd^{-1} \cap F^\times)} \rho_f(\alpha y) 
\sum_{\eta\in \bo_F^\times} W_{f, \infty}(\eta \alpha y_\infty).
\end{equation*}

For a given non-archimedean place $v$, we define the Hecke operator at $v$ following \cite[\S4.6]{Bump97}. 
For a non-zero integer $k$, let $\bM_{v}(k)$ be the subset of $2\times 2$ matrices $g$ with coefficients in $\bo_v$ such that $|\det g|_v = q_v^{-k}$. 
By the Cartan decomposition, we have 
\[\bM_v(k) = \coprod_{\substack{k_1, k_2\geq 0\\ k_1+k_2=k}} \GL_2(\bo_v) \bpm \varpi_v^{k_1} & \\ & \varpi_v^{k_2}\ebpm \GL_2(\bo_v).\]
We define the Hecke operator $T(\bp_v^k)$ by 
\[(T(\bp_v^k) f)(g_1) = \int_{F_v^\times\backslash (F_v^\times\bM_v(k))} f(g_1g)\, d_vg\quad\text{ for } g_1\in \G(\A).\]
Here the multiplication occurs as $gg_1 = \{(g_1g)_w\}_w$ where $(g_1g)_w = g_{1, w}$ for $w\neq v$ and $(g_1g)_v=g_{1, v}g$. 
If $f$ is an eigenfunction of Hecke operator, we have 
\[T(\bp_v^k) f = q_v^{\frac{k}{2}}\lambda_f(\bp_v^k) \cdot f.\]

When $\pi=\otimes_v \pi_v$ is an irreducible cuspidal representation of $\PGL_2(\A)$ such that $\pi_v$ is unramified for any place $v$ of $F$, there exists a non-zero cuspidal function $\phi_{\pi}\in L^2(\PGL_2(F) \backslash \PGL_2(\A)/\K)$, uniquely determined up to constant multiplication, such that, for any non-zero integral ideal $\ba$ in $F$, we have 
\[\lambda_{\pi}(\ba) = c\rho_{\phi_{\pi}}(t_j, \gamma)\sqrt{N(\ba)}\]
where $c$ is a constant and $j$, $1\leq j\leq h$ is the unique index such that $\ba = (\gamma)(t_j)\bd$ for some $\gamma\in F$.

For a cusp form $f\in L^2(\PGL_2(F)\backslash \PGL_2(\A)/\K)$, if there exists an irreducible cuspidal representation $\pi$ of $\PGL_2(\A)$ such that $f=c\phi_\pi$ for some constant $c$, then we say that $f$ is a Hecke-Maass cusp form.

\subsection{Cuspidal representations of \texorpdfstring{$\PGL_n$}{GLn}}
Let $\pi=\otimes_v \pi_v$ be cuspidal representation of $\PGL_n(\A)$ and assume that $\pi_v$ are unramified for all places $v$ of $F$.

For each archimdean place $v$ of $F$, let $\{\mu_i(\pi_v)\}_{i=1}^n\in\C^n$ be the Langland parameters for $\pi_v$. 
Similarly, for each non-archimdean place $v$ of $F$, let $\{\alpha_i(\pi)_v\}_{i=1}^{n}\in\C^n$ be the Satake parameters for $\pi_v$. 
Let $\theta_n$ to be the constant towards the Ramanujan-Selberg conjecture: 
\begin{align*}
& |\Re(\mu_i(\pi_v))|\leq \frac{1}{2}-\theta_n \quad\text{ when $v$ is archimedean, } \\
& |\alpha_i(\pi_v)| \leq q_v^{\frac{1}{2}-\theta_n} \quad \text{ when $v$ is non-archimedean}. 
\end{align*}
Ramanujan---Selberg conjecture asserts that $\theta_n<\frac{1}{2}$ can be chose arbitrarily close to $\frac{1}{2}$. 
By \cite{lrs}, we can take 
\begin{equation*}
0\leq \theta_n\leq (n^2+1)^{-1}.
\end{equation*}
When $n=2$, by \cite[Appendix 2]{k03}, we have 
\begin{equation*}
0\leq \theta_2\leq \frac{1}{2}-\frac{7}{64}. 
\end{equation*}

When $v$ is an archimedean place, we define 
\begin{equation*}
L(s, \pi_v) = \prod_{i=1}^n \Gamma_{F_v}\left(s+\mu_i(\pi_v)\right), 
\end{equation*}
where $\Gamma_{\R}(s) = \pi^{-\frac{s}{2}} \Gamma\left(\frac{s}{2}\right)$ and $\Gamma_{\C}(s) = 2(2\pi)^{-s}\Gamma(s)$. 
When $v$ is a non-archimedean place, we define
\begin{equation*}
L(s, \pi_v)= \prod_{i=1}^{n}(1-\alpha_i(\pi_v)q_v^{-s})^{-1}. 
\end{equation*}
The standard $L$-function for $\pi$ is defined by the following Euler product: 
\begin{equation*}
L(s, \pi) = \prod_{v<\infty} L(s, \pi_v) 
= \prod_{v<\infty} \prod_{i=1}^\infty (1-\alpha_i(\pi_v)q_v^{-s})^{-1} 
= \sum_{\vecm} \frac{\lambda_{\pi}(\vecm)}{N(\vecm)^s}
\end{equation*}
which converges absolutely for $\Re(s)>1+\frac{1}{2}-\theta_n$. 
By Rankin-Selberg theory, one can show that in fact the product and series converge absolutely for $\Re(s)>1$. 
The complete $L$-function 
\begin{equation*}
\Lambda(s, \pi) = \prod_v L(s, \pi_v)
\end{equation*}
continues analytically to $s\in \C$ and satisfies the functional equation
\begin{equation*}
\Lambda(s, \pi) = W(\pi)
\Lambda(1-s, \tilde{\pi}), 
\end{equation*}
where $W(\pi)$ is a root number so $|W(\pi)|=1$. Here $\tilde{\pi}=\otimes_v \tilde\pi_v$ is the contragradient representation of $\pi$. 
By \cite{gk}, for archimedean $v$
\begin{equation*}
\{\mu_i(\tilde{\pi}_v)\}_{i=1}^n = \{\overline{\mu_i(\pi_v)}\}_{i=1}^n
\end{equation*}
and for $v<\infty$, 
\begin{equation*}
\{\alpha_i(\tilde{\pi}_v)\}_{i=1}^{n} = \{\overline{\alpha_i(\pi_v)}\}_{i=1}^n.
\end{equation*}
Following \cite{is}, we define the analytic conductor of $\pi$:
\begin{equation*}
C(\pi; t) = 
\prod_{v\in S_\infty}\prod_{i=1}^n (1+|it+\mu_i(\pi_v)|)
\end{equation*}
and let $C(\pi) = C(\pi; 0)$. 

For $i\in \{1, 2\}$, let $\pi_i = \otimes_v \pi_{i, v}$ be cuspidal representations of $\PGL_n(\A)$ with the trivial central character. 
Assume that $\pi_{i, v}$ are unramified for all places $v$. 
We define the Rankin-Selberg convolution for $\pi_1$ and $\pi_2$. 
For an archimedean place $v$, we define 
\begin{equation*}
L(s, \pi_{1, v}\times \pi_{2, v}) = \prod_{i_1=1}^n \prod_{i_2=1}^n \Gamma_{F_v} \left(s+\mu_{i_1}(\pi_v) + \mu_{i_2}(\pi_v)\right).
\end{equation*}
For a non-archimedean place $v$, we define 
\begin{equation*}
L(s, \pi_{1, v}\times\pi_{2, v})= \prod_{i_1=1}^n \prod_{i_2=1}^n \left(1-\alpha_{i_1}(\pi_{1, v}) \alpha_{i_2}(\pi_{2, v}) q_v^{-s}\right)^{-1}.
\end{equation*}
We define 
\begin{equation}\label{e:RS_def}
L(s, \pi_1\times\pi_2) = \prod_{v<\infty} L(s, \pi_{1, v}\times \pi_{2, v}) = \sum_{\vecm} \frac{\lambda_{\pi_1\times\pi_2}(\vecm)}{N(\vecm)^s}
\end{equation}
which converges absolutely for $\Re(s)>2- 2\theta_n$. 
By the Rankin--Selberg integrals \cite{jpss}, one can prove that the Euler product and the series that define $L(s, \pi_1\times \pi_2)$ in \eqref{e:RS_def} converge absolutely for $\Re(s)>1$. 
The completed $L$-function
\begin{equation*}
\Lambda(s, \pi_1\times \pi_2) = L(s, \pi_1\times \pi_2) \prod_{v\in S_\infty}L(s, \pi_{1, v}\times \pi_{2, v}) 
\end{equation*}
continues analytically to $s\in \C$ except a possible simple pole at $s=1$ and $s=0$. 
The completed $L$-function is bounded (away from the poles at $s=1$ and $s=0$) in vertical strips and is of order $1$. 
Note that $\Lambda(s, \pi_1\times \pi_2)$ is entire if and only if $\widetilde{\pi_1}\neq \pi_2$. 
The Rankin-Selberg convolution satisfies the functional equation 
\begin{equation*}
\Lambda(s, \pi_1\times \pi_2) = W(\pi_1\times \pi_2) 
\Lambda(1-s, \widetilde{\pi_1}\times \widetilde{\pi_2}), 
\end{equation*}
where 
$W(\pi_1\times \pi_2)\in \C$ is the root number satisfying $|W(\pi_1\times \pi_2)|=1$.
As before, following \cite{is}, we define the analytic conductor 
\begin{equation}\label{e:C_R-S_def}
C(\pi_1\times \pi_2; t) = 
\prod_{v\in S_\infty} \prod_{i_1=1}^n \prod_{i_2=1}^n (1+|it+\mu_{i_1}(\pi_{1, v})+\mu_{i_2}(\pi_{2, v})|)
\end{equation}
and we let $C(\pi_1\times \pi_2) = C(\pi_1\times \pi_2; 0)$. 
By \cite[(8)]{Bru06}, 
\begin{equation*}
C(\pi_1\times \pi_2; t) \leq \big(C(\pi_1) C(\pi_2) (1+|t|)^{n[F:\Q]}\big)^n.
\end{equation*}

Applying the Phragm\'en-Lindel\"of principle as in \cite[(10)]{Bru06}, we have the following preconvex bound for $-1+2\theta_n \leq \sigma \leq 2-2\theta_n$.
\begin{equation}\label{e:preconvex}
(\sigma-1)^{\delta_{\pi_1, \widetilde{\pi_2}}} L(\sigma+it, \pi_1\times \pi_2) \ll_\varepsilon C(\pi_1\times \pi_2; t)^{l(\sigma)+\varepsilon}, 
\end{equation} 
for $\varepsilon>0$.
Here $l(\sigma)$ is the linear function satisfying $l(-1+2\theta_n)=\frac{3}{2}-2\theta_n$ and $l(2-2\theta_n)=0$, i.e., $l(\sigma) = -\frac{1}{2}\sigma+1-\theta_n$.

Note that for $n=2$ we know the finiteness of the exceptional eigenvalues \cite{CLLL}, hence when $t_{\pi_1}$ and $t_{\pi_2}$ are both sufficiently large, we know that $\pi_1$ and $\pi_2$ are tempered at every Archimedean place. Accordingly, we use the convexity bound \cite{XiaLi10}
\begin{equation}\label{e:preconvexGL2}
(\sigma-1)^{\delta_{\pi_1, \widetilde{\pi_2}}} L(\sigma+it, \pi_1\times \pi_2) \ll_\varepsilon C(\pi_1\times \pi_2; t)^{\frac{1-\sigma}{2}+\varepsilon},
\end{equation} 
for such forms.

\section{Bounding the dimension of a joint eigenspace}

\subsection{Spectral large sieve}\label{ss:spectral_largesieve}

We begin by studying large sieve inequalities in the archimedean aspect.  
While large sieve inequalities in the non-archimedean (level) aspect have been studied previously in \cite{DK}, we adapt their convexity bound based approach to the archimedean setting.

For $Q\gg 1$, let $\scrA_n(\leq Q)$ be a set of irreducible cuspidal representations $\pi=\otimes_v \pi_v$ of $\PGL_n(\A)$ such that $\pi_v$ is unramified for any place $v$ and its analytic conductor $C(\pi)\leq Q$.

Let $\phi$ be a smooth, non-negative and compactly supported function on $[0, +\infty)$ satisfying 
\begin{itemize}
\item $\phi(x)\in [0, 1]$ for any $x\in [0, +\infty)$,
\item $\phi(x)=1$ when $x\in [0, 1]$, and 
\item $\phi(x)=0$ when $x>2$.
\end{itemize}
For $s\in \C$, the Mellin transform of $\phi$ is 
\begin{equation*}
\tilde{\phi}(s) = \int_0^\infty \phi(x)x^s\, \frac{dx}{x}.
\end{equation*}

Following the first part of the proof of \cite[Theorem 3]{Bru06}, by \cite[(15)]{Bru06}, for $X\gg 1$ and $0< \sigma< 1$, we get
\begin{equation*}
\sum_{\vecm} \lambda_{\pi_1\times \pi_2}(\vecm) \phi\left(\frac{N(\vecm)}{X}\right)
= \Res_{s=1} L(s, \pi_1\times \pi_2) \tilde{\phi}(1)X 
+ O_{\varepsilon}\big(C(\pi_1\times \pi_2)^{l(\sigma)+\varepsilon} X^\sigma\big), 
\end{equation*}
for any $\varepsilon>0$.
By \eqref{e:preconvex}, we get
\begin{equation*}
C(\pi_1\times\pi_2)^{l(\sigma)+\varepsilon}
\leq \big(C(\pi_1)C(\pi_2)\big)^{n(l(\sigma)+\varepsilon)}
= \big(C(\pi_1)C(\pi_2)\big)^{n(\frac{1-\sigma}{2}+\frac{1}{2}-\theta_n)+\varepsilon}.
\end{equation*}
Therefore, for $X\gg 1$ and $0< \sigma < 1$, for $\varepsilon>0$, we have 
\begin{multline}\label{e:RS_sum}
\sum_{\vecm} \lambda_{\pi_1\times\pi_2}(\vecm) \phi\left(\frac{N(\vecm)}{X}\right)
\\ = \Res_{s=1}L(s, \pi_1\times \pi_2) \tilde{\phi}(1) X + O_{\varepsilon, \phi, F, n} \left(X^\sigma \big((C(\pi_1)C(\pi_2)\big)^{n(\frac{1-\sigma}{2}+\frac{1}{2}-\theta_n)+\varepsilon}\right).
\end{multline}

For each non-archimedean place $v$, 
\begin{align*}
L(s, \pi_{1, v}\times \pi_{2, v}) 
& = \sum_{j=0}^\infty \frac{\lambda_{\pi_1\times \pi_2}(q_v^j)}{q_v^{js}}
\\ & = (1-q_v^{-ns})^{-1} \sum_{j_1=0}^\infty \cdots\sum_{j_{n-1}=0}^\infty \frac{\lambda_{\pi_1}(q_{v}^{j_1}, \ldots, q_{v}^{j_{n-1}}) \lambda_{\pi_2}(q_v^{j_1}, \ldots, q_v^{j_{n-1}})}
{q_v^{s\sum_{i=1}^{n-1}(n-i) j_i}}
\\ &= \sum_{j_0=0}^\infty \sum_{j_1=0}^\infty \cdots\sum_{j_{n-1}=0}^\infty \frac{\lambda_{\pi_1}(q_{v}^{j_1}, \ldots, q_{v}^{j_{n-1}}) \lambda_{\pi_2}(q_v^{j_1}, \ldots, q_v^{j_{n-1}})}
{q_v^{s\sum_{i=0}^{n-1}(n-i) j_i}}
\end{align*}
So, for each $j\geq 0$, 
\begin{equation*}
\lambda_{\pi_1\times \pi_2}(q_v^j) = \sum_{\substack{j_0, \ldots, j_{n-1}\geq 0\\ \sum_{i=0}^{n-1} (n-i)j_i = j}} \lambda_{\pi_1}(q_v^{j_1}, \ldots, q_v^{j_{n-1}}) \lambda_{\pi_2}(q_v^{j_1}, \ldots, q_v^{j_{n-1}}).
\end{equation*}
For example, when $n=2$, 
\begin{equation*}
\lambda_{\pi_1\times \pi_2}(q_v^j)
= \sum_{\substack{j_0, j_1 \geq 0\\ 2j_0 + j_1=j}}\lambda_{\pi_1}(q_v^{j_1}) \lambda_{\pi_2}(q_v^{j_2})
\end{equation*}
and when $n=3$, 
\begin{equation*}
\lambda_{\pi_1\times\pi_2}(q_v^j) = \sum_{\substack{j_0, j_1, j_2\geq 0\\ 3j_0+2j_1+j_2 = j}} \lambda_{\pi_1}(q_v^{j_1}, q_v^{j_2}) \lambda_{\pi_2}(q_v^{j_1}, q_v^{j_2}).
\end{equation*}

Recall from \cite{XiaLi10} that for $\pi \in \scrA_n(\leq Q)$
\begin{equation}\label{e:XiaLi_RSres}
 \Res_{s=1} L(s, \pi\times \widetilde{\pi}) = O_\varepsilon(Q^\varepsilon).
\end{equation}

\begin{theorem}\label{thm:spec_large_sieve_n}
For $X, Q\gg 1$ and $0< \sigma < 1$, we have 
\begin{multline}\label{e:spec_large_sieve_n}
\sum_{\pi\in \scrA_n(\leq Q)} \bigg|\sum_{\substack{\vecm_0, \ldots, \vecm_{n-1}\\ \prod_{i=0}^{n-1} N(\vecm_i)^{n-i} \leq X}} x_{\vecm_0, \ldots, \vecm_{n-1}} \lambda_\pi(\vecm_1, \ldots, \vecm_{n-1})\bigg|^2
\\ \ll_{\varepsilon, F, n} Q^\varepsilon\bigg(X+X^\sigma Q^{n(1-\sigma+1-2\theta_n)} \#\scrA_n(\leq Q) \bigg) 
\sum_{\substack{\vecm_0, \ldots, \vecm_{n-1}\\ \prod_{i=0}^{n-1} N(\vecm_i)^{n-i}\leq X}} |x_{\vecm_0, \ldots, \vecm_{n-1}}|^2, 
\end{multline} 
for any $\varepsilon>0$. 
\end{theorem}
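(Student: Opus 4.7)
The plan is to prove \eqref{e:spec_large_sieve_n} by duality, reducing the spectral side to a bilinear sum that unfolds through the Rankin--Selberg identity into precisely the Dirichlet sums controlled by \eqref{e:RS_sum}. Specifically, by the standard large sieve duality, it suffices to prove the same upper bound (with $\sum_\pi|c_\pi|^2$ in place of $\sum|x|^2$) for the dual form
\[
D(c) := \sum_{\substack{\vecm_0,\ldots,\vecm_{n-1}\\ \prod_{i=0}^{n-1}N(\vecm_i)^{n-i}\leq X}}\Bigl|\sum_{\pi\in \scrA_n(\leq Q)}c_\pi\,\overline{\lambda_\pi(\vecm_1,\ldots,\vecm_{n-1})}\Bigr|^2
\]
for arbitrary scalars $(c_\pi)$. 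I would work on the dual side because, after expanding the square, the inner $(\vecm_0,\ldots,\vecm_{n-1})$-sum is precisely of the shape evaluated by \eqref{e:RS_sum}.

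Next, I majorize the sharp cutoff by $\phi(\prod N(\vecm_i)^{n-i}/X)$ (valid since $\phi=1$ on $[0,1]$) and extend the sum to all $(\vecm_0,\ldots,\vecm_{n-1})$. Expanding the square produces
\[
D(c)\leq \sum_{\pi_1,\pi_2}c_{\pi_1}\bar c_{\pi_2}\sum_{\vecm_0,\ldots,\vecm_{n-1}}\phi\Bigl(\tfrac{\prod N(\vecm_i)^{n-i}}{X}\Bigr)\,\lambda_{\tilde\pi_1}(\vecm_1,\ldots)\,\lambda_{\pi_2}(\vecm_1,\ldots),
\]
where I have used $\overline{\lambda_\pi(\cdot)}=\lambda_{\tilde\pi}(\cdot)$ for everywhere-unramified $\pi$. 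The key multiplicative identity
\[
\lambda_{\pi_1\times\pi_2}(\vecm)=\sum_{\substack{\vecm_0,\ldots,\vecm_{n-1}\\ \prod_i\vecm_i^{n-i}=\vecm}}\lambda_{\pi_1}(\vecm_1,\ldots,\vecm_{n-1})\,\lambda_{\pi_2}(\vecm_1,\ldots,\vecm_{n-1}),
\]
which follows globally from the prime-power formula stated just before the theorem (together with $N(\prod\vecm_i^{n-i})=\prod N(\vecm_i)^{n-i}$), collapses the inner sum into $\sum_\vecm\phi(N(\vecm)/X)\,\lambda_{\tilde\pi_1\times\pi_2}(\vecm)$, which is precisely the quantity evaluated by \eqref{e:RS_sum}.

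Applying \eqref{e:RS_sum} together with $C(\pi_i)\leq Q$, the diagonal $\pi_1=\pi_2=\pi$ contributes
\[
\tilde\phi(1)X\sum_{\pi}|c_\pi|^2\,\Res_{s=1}L(s,\pi\times\tilde\pi)\ll_\varepsilon Q^\varepsilon X\sum_\pi|c_\pi|^2
\]
by \eqref{e:XiaLi_RSres} (the residue is non-negative, so this is a genuine upper bound), while the error term summed over all pairs $(\pi_1,\pi_2)$ is bounded, using $(\sum_\pi|c_\pi|)^2\leq\#\scrA_n(\leq Q)\sum_\pi|c_\pi|^2$, by $X^\sigma Q^{n(1-\sigma+1-2\theta_n)+\varepsilon}\,\#\scrA_n(\leq Q)\sum_\pi|c_\pi|^2$. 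Adding the two contributions yields the desired bound on $D(c)$.

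The hinge of the argument is the collapsing identity above: it depends crucially on $\pi$ being unramified at every finite place, so that the Satake parameters alone determine the local $L$-factors and the sum over $(\vecm_0,\ldots,\vecm_{n-1})$ aligns exactly with the factorization of $\lambda_{\pi_1\times\pi_2}(\vecm)$. If ramification were allowed, local correction factors at the ramified primes would obstruct the clean identity and force a more delicate local analysis. The remaining steps---duality, smoothing, and the crude Cauchy--Schwarz on the off-diagonal---are standard and present no real obstacle.
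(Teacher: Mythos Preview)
Your proposal is correct and follows essentially the same route as the paper: duality, smoothing by $\phi$, collapsing the inner $(\vecm_0,\ldots,\vecm_{n-1})$-sum into $\sum_{\vecm}\lambda_{\pi_1\times\tilde\pi_2}(\vecm)\phi(N(\vecm)/X)$, and then invoking \eqref{e:RS_sum} and \eqref{e:XiaLi_RSres}. The only cosmetic difference is that the paper bounds the bilinear form in one stroke via a Schur-type inequality (\cite[Lemma 1]{DK}), giving $D(c)\le M(\le Q)\sum_\pi|c_\pi|^2$ with $M(\le Q)=\max_{\pi_1}\sum_{\pi_2}\sum_{\vecm}\lambda_{\pi_1\times\tilde\pi_2}(\vecm)\phi(N(\vecm)/X)$, whereas you separate diagonal from off-diagonal and apply Cauchy--Schwarz directly; both devices are standard and yield the identical bound.
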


\begin{proof}
By following the duality principle, in the proof of \cite[Theorem 4]{DK}, we note that, by general Hilbert theory, 
the inequality \eqref{e:spec_large_sieve_n} is equivalent to the following dual inequality:
\begin{multline}\label{e:spec_large_sieve_n_dual}
\sum_{\substack{\vecm_0, \ldots,\vecm_{n-1}\\ \prod_{i=0}^{n-1} N(\vecm_i)^{n-i}\leq X}} 
\bigg|\sum_{\pi\in \scrA_n(\leq Q)} x_\pi \lambda_\pi(\vecm_1, \ldots, \vecm_{n-1})\bigg|^2 
\\ \ll_{\varepsilon, F, n} Q^\varepsilon\bigg(X+X^\sigma Q^{n(1-\sigma+1-2\theta_n)} \#\scrA_n(\leq Q) \bigg) \sum_{\pi \in \scrA_n(\leq Q)}|x_\pi|^2, 
\end{multline}
for any $\varepsilon>0$. 
So we will prove \eqref{e:spec_large_sieve_n_dual}.

Choose a smooth and compactly supported function $\phi$ on $[0, +\infty]$ as in \pageref{ss:spectral_largesieve}.
Since each term of the LHS of \eqref{e:spec_large_sieve_n_dual} is positive and $\phi(x)=1$ when $x\in [0, 1]$ and $0\leq \phi(x)\leq 1$ for any $x\geq 1$, we have 
\begin{multline*}
\sum_{\substack{\vecm_0, \ldots,\vecm_{n-1}\\ \prod_{i=0}^{n-1} N(\vecm_i)^{n-i}\leq X}} 
\bigg|\sum_{\pi\in \scrA_n(\leq Q)} x_\pi \lambda_\pi(\vecm_1, \ldots, \vecm_{n-1})\bigg|^2 
\\ \leq \sum_{\vecm_0, \ldots,\vecm_{n-1}} \phi\left(\frac{\prod_{i=0}^{n-1}N(\vecm_i)^{n-i}}{X}\right)  
\bigg|\sum_{\pi\in \scrA_n(\leq Q)} x_\pi \lambda_\pi(\vecm_1, \ldots, \vecm_{n-1})\bigg|^2 
\\ = \sum_{\vecm_0, \ldots,\vecm_{n-1}} \phi\left(\frac{\prod_{i=0}^{n-1}N(\vecm_i)^{n-i}}{X}\right)  
\sum_{\pi_1, \pi_2\in \scrA_n(\leq Q)} x_{\pi_1}\overline{x_{\pi_2}} \lambda_{\pi_1}(\vecm_1, \ldots, \vecm_{n-1}) \lambda_{\widetilde{\pi_2}}(\vecm_1, \ldots, \vecm_{n-1}).
\end{multline*}
Here we use $\overline{\lambda_{\pi}(\vecm_1, \ldots, \vecm_{n-1})} = \lambda_{\tilde{\pi}}(\vecm_1, \ldots, \vecm_{n-1})$. 
After changing the order of the sums, by applying \cite[Lemma 1]{DK}, we have 
\begin{multline*}
= \sum_{\pi_1, \pi_2\in \scrA_n(\leq Q)} x_{\pi_1}\overline{x_{\pi_2}} 
\sum_{\vecm_0, \ldots, \vecm_{n-1}} \phi\left(\frac{\prod_{i=0}^{n-1} N(\vecm_i)^{n-i}}{X}\right) 
\lambda_{\pi_1}(\vecm_1, \ldots, \vecm_{n-1}) \lambda_{\widetilde{\pi_2}}(\vecm_1, \ldots, \vecm_{n-1})
\\ \leq M(\leq Q) \bigg(\sum_{\pi\in \scrA_n(\leq Q)} |x_{\pi}|^2\bigg), 
\end{multline*}
where
\begin{multline*}
M(\leq Q) 
\\ = \max_{\pi_1\in \scrA_n(\leq Q)} \bigg\{\sum_{\pi_2\in \scrA_n(\leq Q)} \sum_{\vecm_0, \ldots, \vecm_{n-1}} \lambda_{\pi_1}(\vecm_1, \ldots, \vecm_{n-1})\lambda_{\widetilde{\pi_2}}(\vecm_1, \ldots, \vecm_{n-1}) \phi\left(\frac{\prod_{i=0}^{n-1}N(\vecm_i)^{n-i}}{X}\right) \bigg\}.
\end{multline*}
Since 
\begin{multline*}
\sum_{\vecm_0, \ldots, \vecm_{n-1}} \lambda_{\pi_1}(\vecm_1, \ldots, \vecm_{n-1})\lambda_{\widetilde{\pi_2}}(\vecm_1, \ldots, \vecm_{n-1}) \phi\left(\frac{\prod_{i=0}^{n-1}N(\vecm_i)^{n-i}}{X}\right) 
\\ = \sum_{\vecm} \lambda_{\pi_1\times\widetilde{\pi_2}}(\vecm)\phi\left(\frac{N(\vecm)}{X}\right), 
\end{multline*}
by \eqref{e:RS_sum}, for $0<\sigma<1$ and any $\varepsilon>0$, we have
\begin{equation*}
= \Res_{s=1} L(s, \pi_1\times\widetilde{\pi_2}) \tilde{\phi}(1) X + O_{\varepsilon, \phi, F, n}\left(X^\sigma \big(C(\pi_1)C(\widetilde{\pi_2})\big)^{n(\frac{1-\sigma}{2}+\frac{1}{2}-\theta_n)+\varepsilon}\right). 
\end{equation*}
Note that $\Res_{s=1}L(s, \pi_1\times \widetilde{\pi_2})=0$ unless $\pi_2=\pi_1$. 
So 
\begin{multline*}
\sum_{\pi_2\in \scrA_n(\leq Q)} \sum_{\vecm_0, \ldots, \vecm_{n-1}} \lambda_{\pi_1}(\vecm_1, \ldots, \vecm_{n-1})\lambda_{\widetilde{\pi_2}}(\vecm_1, \ldots, \vecm_{n-1}) \phi\left(\frac{\prod_{i=0}^{n-1}N(\vecm_i)^{n-i}}{X}\right) 
\\ = \Res_{s=1} L(s, \pi_1\times \widetilde{\pi_1}) \tilde{\phi}(1) X 
+ O_{\varepsilon, \phi, F, n} \left(X^\sigma Q^{n(1-\sigma+1-2\theta_n)+\varepsilon}\#\scrA_n(\leq Q)\right)
\end{multline*}

Therefore, by applying \eqref{e:XiaLi_RSres}, we get 
\begin{multline*}
\sum_{\substack{\vecm_0, \ldots,\vecm_{n-1}\\ \prod_{i=0}^{n-1} N(\vecm_i)^{n-i}\leq X}} 
\bigg|\sum_{\pi\in \scrA_n(\leq Q)} x_\pi \lambda_\pi(\vecm_1, \ldots, \vecm_{n-1})\bigg|^2 
\\ \ll_{\varepsilon, \phi, F, n} Q^\varepsilon \bigg(  X+ X^\sigma Q^{n(1-\sigma+1-2\theta_n)} \#\scrA_n(\leq Q)\bigg)
\bigg(\sum_{\pi\in \scrA_n(\leq Q)} |x_{\pi}|^2\bigg)
\end{multline*}
and complete the proof of \eqref{e:spec_large_sieve_n_dual}.
By applying the general Hilbert theory as in the proof of \cite[Lemma 1]{DK}, this inequality is equivalent to the dual inequality \eqref{e:spec_large_sieve_n}. 
\end{proof} 

\subsection{Spectral large sieve for \texorpdfstring{$n=2$}{n=2} and symmetric square} 
Now we focus on the case when $n=2$. 
Fix $c>1$ and let $T>0$ be a large parameter and let $1<G<T$. 
Let $\scrA(T, G)$ be the set of irreducible cuspidal representations of $\PGL_2(\A)$ satisfying the followings:
$\pi=\otimes_v \pi_v$,  
\begin{itemize}
\item $\pi_v$ is unramified for all places $v$ of $F$;
\item $T\leq t_\pi = (\sum_{v\in S_\infty}\lambda_{\pi_v})^{\frac{1}{2}} \leq T+G$.
\end{itemize}
We assume that $T$ is sufficiently large so that \eqref{e:preconvexGL2} is valid. Note that we have \cite{zbMATH06321163}
\begin{equation}\label{weylauto}
\#\scrA(T, G) \ll_\epsilon T^{d-1+\epsilon}G
\end{equation}
where $d$ is the dimension of the symmetric space $\X_F$.

Since the central character is trivial, $\mu_1(\pi_v)=-\mu_2(\pi_v)$ for all $v\in S_\infty$.

We will use the argument that used to prove Theorem \ref{thm:spec_large_sieve_n} with a slight alternation to prove Theorem \ref{thm:spec_large_sieve_n=2_sym2} below. 
Since we consider $\scrA(T, G)$, not $\scrA_2(\leq Q)$, instead of 
$Q^{2(1-\sigma+1-2\theta_2)+\varepsilon}$ 
(for $n=2$), 
we first get the upper bound for the analytic conductors of $L(s, \pi_1\times\pi_2)$ for $\pi_1, \pi_2\in \scrA(T, G)$. 

By \eqref{e:C_R-S_def}, for $\pi_1, \pi_2\in \scrA(T, G)$, since $\theta_2=0$, we have
\begin{multline}\label{e:C_RS_n=2_est}
C(\pi_1\times \pi_2; t) 
\leq 
\prod_{v\in S_\infty} \bigg\{\left(1 + |t|+|\mu_1(\pi_{1, v})|+|\mu_1(\pi_{2, v})|\right)^2
\left(1 + |t|+\left||\mu_1(\pi_{1, v})|-|\mu_1(\pi_{2, v})|\right|\right)^2\bigg\}
\\ \leq 
\left(\left(1+|t|+G\right) 
\left(1+|t|+2T+2G\right)\right)^{2\#S_\infty}.
\end{multline}
Here We use that, since $\pi_1, \pi_2\in \scrA(T, G)$, 
we have $\left||\mu_1(\pi_{1, v})|-|\mu_1(\pi_{2, v})|\right| \leq G$ and $|\mu_1(\pi_{1, v})|+|\mu_1(\pi_{2, v})|\leq 2T+2G$. 

Now we get a formula which is similar to \eqref{e:RS_sum}.
Following the arguments in p.\pageref{e:RS_sum}, 
for $X\gg 1$, $0< \sigma < 1$ and any $\varepsilon>0$, we have 
\begin{multline*}
\sum_{\vecm}\lambda_{\pi_1\times\pi_2}(\vecm) \phi\left(\frac{N(\vecm)}{X}\right)
= \Res_{s=1}L(s, \pi_1\times\pi_2)\tilde\phi(1) X
\\ + O_{\varepsilon, \phi, F} \left(X^\sigma \left(q(\pi_1\times \pi_2) \left((1+G)(1+2T+2G)\right)^{2\#S_\infty}\right)^{\frac{-\sigma+1}{2}+\varepsilon}\right).
\end{multline*}
Here $\phi$ satisfies the conditions in the proof of Theorem \ref{thm:spec_large_sieve_n}. 
Since 
\begin{equation*}
(1+G)(1+2T+2G)\ll TG, 
\end{equation*}
we have 
\begin{multline}\label{e:RS_sum_n=2}
\sum_{\vecm}\lambda_{\pi_1\times\pi_2}(\vecm) \phi\left(\frac{N(\vecm)}{X}\right)
= \Res_{s=1}L(s, \pi_1\times\pi_2)\tilde\phi(1) X
+ O_{\varepsilon, \phi, F} \left(X^\sigma (TG)^{\#S_\infty(1-\sigma)+\varepsilon}\right).
\end{multline}

Let $\pi=\otimes_v \pi_v$ be an irreducible cuspidal representation of $\PGL_2(\A)$.
Assume that the central character for $\pi$ is trivial and $\pi_v$ is unramified for any $v$. 
We define $\pi^{(2)} = \sym^2\pi = \otimes_{v} \pi^{(2)}_v$, the symmetric square representation of $\pi$ which is an irreducible cuspidal representation of $\PGL_3(\A)$, 
with trivial central character and $\pi^{(2)}_v$ is unramified for any $v$.
The Langlands parameters and Satake parameters for $\pi^{(2)}_v$ are determined by 
\begin{equation}\label{e:mu_sym2}
\mu_1(\pi_v^{(2)}) = 2\mu_1(\pi_v), \quad \mu_2(\pi^{(2)}_v)=2\mu_2(\pi_v)  \quad\text{ and } \quad \mu_3(\pi^{(2)}_v)=0, 
\end{equation}
when $v$ is archimedean and 
\begin{equation*}
\alpha_1(\pi^{(2)}_v) = \alpha_1(\pi_v)^2, \quad \alpha_2(\pi^{(2)}_v) = \alpha_2(\pi_v)^2 \quad\text{ and } \quad \alpha_3(\pi^{(2)}_v) = 1
\end{equation*}
when $v$ is non-archimedean. 
The $L$-function for $\sym^2\pi = \pi^{(2)}$ is defined by 
\begin{equation*}
L(s, \sym^2\pi) = \prod_{v<\infty} (1-q_v^{-s})^{-1} (1-\alpha_1(\pi_v)^2 q_v^{-s})^{-1} (1-\alpha_2(\pi_v)^2q_v^{-s})^{-1}
= \sum_{\vecm}\frac{\lambda_{\sym^2\pi}(\vecm)}{N(\vecm)^s}
\end{equation*}
and the completed $L$-function 
\begin{equation*}
\Lambda(s, \sym^2\pi) = L(s, \sym^2\pi) \prod_{v\in S_\infty} \Gamma_{F_v}(s) \Gamma_{F_v}(s+2\mu_1(\pi_v)) \Gamma(s+2\mu_2(\pi_v)).
\end{equation*}
Note that the Euler product and series converges absolutely for $\Re(s)>1$. 
Then $\Lambda(s, \sym^2\pi)$ continues analytically to $s\in \C$ and satisfies the $\PGL_3$-functional equation. 

For $\pi_1, \pi_2\in \scrA(T, G)$, recalling \eqref{e:C_R-S_def} and \eqref{e:mu_sym2}, and from the assumption that $T$ is sufficiently large so that $\pi_1$ and $\pi_2$ are tempered \cite{CLLL}, we have
\begin{multline*}
C(\pi_1^{(2)}\times \pi_2^{(2)}; t) 
\leq q(\pi_1^{(2)} \times \pi_2^{(2)}) 
\prod_{v\in S_\infty} \bigg\{(1+|t|)
\prod_{j=1}^2\left(1+|it+2\mu_{j}(\pi_{1, v})|\right)
\prod_{j=1}^2\left(1+|it+2\mu_{j}(\pi_{2, v})|\right)
\\ \times 
\prod_{i_1=1}^2\prod_{i_2=1}^2 \left(1+ 
|it+2\mu_{i_1}(\pi_{1, v})+2\mu_{i_2}(\pi_{2, v})|\right)
\bigg\}.
\end{multline*}
As in \eqref{e:C_RS_n=2_est}, we have
\begin{equation*}
C(\pi_1^{(2)}\times \pi_2^{(2)}; t) 
\ll_{t, F} \left((T+G)^4 (T+G)^2G^2\right)^{\#S_\infty}
\ll \left(T^3G\right)^{2\#S_\infty}. 
\end{equation*}
Then, for $X\gg 1$ and $0< \sigma< 1$, for any $\epsilon>0$, we have 
\begin{multline}\label{e:RS_sum_sym2}
\sum_{\vecm}\lambda_{\pi^{(2)}_1\times \pi^{(2)}_2}(\vecm) \phi\left(\frac{N(\vecm)}{X}\right)
= \Res_{s=1} L(s, \pi^{(2)}_1\times \pi^{(2)}_2) \tilde{\phi}(1) X 
+ O_{\varepsilon, \phi, F} \big(X^\sigma (T^3G)^{\#S_\infty(1-\sigma)+\varepsilon} \big),
\end{multline}
where we used \eqref{e:preconvexGL2}.
 
Now recall from \cite{XiaLi10} that
\begin{equation*}
\Res_{s=1}L(s, \pi\times \tilde{\pi}) \ll_\varepsilon T^\varepsilon
\end{equation*}
and that
\begin{equation*}
\Res_{s=1}L(s, \pi^{(2)}\times \tilde{\pi}^{(2)})\ll_\varepsilon T^\varepsilon. 
\end{equation*}
\begin{theorem}\label{thm:spec_large_sieve_n=2_sym2}
For $X\gg 1$ and $0< \sigma < 1$, we have 
\begin{multline}\label{e:spec_large_sievei_n=2}
\sum_{\pi\in \scrA(T, G)} \bigg|\sum_{\substack{\vecm_0, \vecm_1\\ N(\vecm_0)^2 N(\vecm_1) \leq X}} x_{\vecm_0, \vecm_1} \lambda_\pi(\vecm_1)\bigg|^2
\\ \ll_{\varepsilon, F} T^\varepsilon\left( X + X^\sigma (TG)^{\#S_\infty (1-\sigma)} \#\scrA(T, G)\right)
\sum_{\substack{\vecm_0, \vecm_1\\ N(\vecm_0)^2N(\vecm_1)\leq X}} |x_{\vecm_0, \vecm_1}|^2
\end{multline}
and 
\begin{multline}\label{e:spec_large_sievei_n=2_sym}
\sum_{\pi\in \scrA(T, G)} \bigg|\sum_{\substack{\vecm_0, \vecm_1, \vecm_2\\ N(\vecm_0)^3N(\vecm_1)^2N(\vecm_2)\leq X}} 
x_{\vecm_0, \vecm_1, \vecm_2}\lambda_{\pi^{(2)}}(\vecm_1, \vecm_2)\bigg|^2
\\ \ll_{\varepsilon, F} T^\varepsilon\left( X+ X^{\sigma} (T^3G)^{\#S_\infty(1-\sigma)} \#\scrA(T, G)\right) 
\sum_{\substack{\vecm_0, \vecm_1, \vecm_2\\ N(\vecm_0)^3N(\vecm_1)^2N(\vecm_2)\leq X}} |x_{\vecm_0, \vecm_1, \vecm_2}|^2. 
\end{multline}
\end{theorem}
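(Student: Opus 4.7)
The plan is to carry out the argument of Theorem \ref{thm:spec_large_sieve_n} (for $n=2$ and $n=3$ respectively), but replacing the convexity-based estimate \eqref{e:RS_sum} with the sharper family-specific estimates \eqref{e:RS_sum_n=2} and \eqref{e:RS_sum_sym2}, and replacing $Q^{n(1-\sigma+1-2\theta_n)}$ by the relevant powers of $TG$ and $T^3G$. By the Hilbert space duality used in the proof of Theorem \ref{thm:spec_large_sieve_n}, it suffices to establish the dual inequalities
\begin{equation*}
\sum_{\substack{\vecm_0, \vecm_1 \\ N(\vecm_0)^2 N(\vecm_1) \leq X}} \Bigg|\sum_{\pi\in \scrA(T,G)} x_\pi \lambda_\pi(\vecm_1)\Bigg|^2
\ll_{\varepsilon,F} T^\varepsilon\big(X + X^\sigma (TG)^{\#S_\infty(1-\sigma)} \#\scrA(T,G)\big) \sum_{\pi} |x_\pi|^2,
\end{equation*}
and the analogous one for $\sym^2$, after which the original estimates follow.

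The next step is to insert the smooth majorant $\phi\big(\prod_i N(\vecm_i)^{n-i}/X\big)$ to extend each inner sum to all indices with positive weight, expand the square and interchange the order of summation. For the first inequality this produces, for each pair $(\pi_1,\pi_2)$, an inner sum which by the coefficient identity
\[
\lambda_{\pi_1\times\widetilde{\pi_2}}(\vecm) = \sum_{\substack{j_0,j_1\geq 0\\ 2j_0+j_1 = \ord(\vecm)}} \lambda_{\pi_1}(\vecm_1)\lambda_{\widetilde{\pi_2}}(\vecm_1)
\]
coincides with $\sum_{\vecm} \lambda_{\pi_1\times\widetilde{\pi_2}}(\vecm)\phi(N(\vecm)/X)$, exactly the quantity controlled by \eqref{e:RS_sum_n=2}. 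The residue $\Res_{s=1} L(s,\pi_1\times \widetilde{\pi_2})$ vanishes unless $\pi_2\cong\pi_1$ (so the diagonal contribution uses the bound $\Res_{s=1}L(s,\pi\times\widetilde{\pi})\ll T^\varepsilon$ from \cite{XiaLi10} recalled just before the theorem), while the off-diagonal is absorbed by the error $X^\sigma (TG)^{\#S_\infty(1-\sigma)+\varepsilon}$, summed over $\pi_2\in \scrA(T,G)$. Applying \cite[Lemma 1]{DK} as in the earlier proof completes the first inequality.

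For the symmetric-square inequality the argument is identical in structure, now with $\pi_i$ replaced by $\pi_i^{(2)}$, the coefficient identity for $n=3$
\[
\lambda_{\pi_1^{(2)}\times \pi_2^{(2)}}(\vecm) = \sum_{\substack{j_0,j_1,j_2\geq 0 \\ 3j_0+2j_1+j_2=\ord(\vecm)}} \lambda_{\pi_1^{(2)}}(\vecm_1,\vecm_2)\lambda_{\pi_2^{(2)}}(\vecm_1,\vecm_2)
\]
being the source of the weights $N(\vecm_0)^3 N(\vecm_1)^2 N(\vecm_2)$, and \eqref{e:RS_sum_sym2} providing the decomposition into main term plus error $X^\sigma (T^3G)^{\#S_\infty(1-\sigma)+\varepsilon}$. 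The diagonal term here uses the residue bound $\Res_{s=1} L(s,\pi^{(2)}\times\widetilde{\pi^{(2)}}) \ll T^\varepsilon$ recalled before the theorem.

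The one conceptual point that needs a brief justification is that in the symmetric-square inequality the residue $\Res_{s=1} L(s,\pi_1^{(2)}\times \pi_2^{(2)})$ again vanishes off the diagonal $\pi_1\cong \pi_2$; this uses Ramakrishnan's theorem that $\sym^2$ is injective on cuspidal $\PGL_2$-representations, combined with the self-contragredience of $\pi^{(2)}$ forced by the trivial central character. Apart from this, the proof is a routine adaptation and the main technical care is bookkeeping the conductor inequality \eqref{e:C_RS_n=2_est} and its symmetric-square analog to produce the explicit $TG$ and $T^3G$ in place of the generic $Q^2$.
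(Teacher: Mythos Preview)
Your proposal is correct and follows essentially the same route as the paper's proof: duality, insertion of the smooth majorant $\phi$, expansion of the square, identification of the inner sum with $\sum_{\vecm}\lambda_{\pi_1\times\widetilde{\pi_2}}(\vecm)\phi(N(\vecm)/X)$ (resp.\ with $\pi_i$ replaced by $\pi_i^{(2)}$), and application of \eqref{e:RS_sum_n=2} and \eqref{e:RS_sum_sym2}. Your observation that the vanishing of $\Res_{s=1}L(s,\pi_1^{(2)}\times\widetilde{\pi_2}^{(2)})$ for $\pi_1\not\cong\pi_2$ requires Ramakrishnan's injectivity of $\sym^2$ is a point the paper leaves implicit in this proof (though \cite[Theorem 4.1.2]{R00} is invoked in the next subsection), so if anything you have been slightly more careful.
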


\begin{proof}
We follow the proof of Theorem \ref{thm:spec_large_sieve_n}. 
We again choose a smooth and compactly supported function $\phi$ on $[0, +\infty]$ satsifying conditions in p.\pageref{ss:spectral_largesieve}. 
Consider 
\begin{multline*}
\sum_{\substack{\vecm_0, \vecm_1\\ N(\vecm_0)^2 N(\vecm_1) \leq X}} \bigg|\sum_{\pi\in \scrA(T, G)} x_\pi \lambda_\pi(\vecm_1)\bigg|^2
\leq \sum_{\vecm_0, \vecm_1} \phi\left(\frac{N(\vecm_0)^2 N(\vecm_1)}{X}\right)
\bigg|\sum_{\pi\in \scrA(T, G)} x_\pi \lambda_\pi(\vecm_1)\bigg|^2
\\ \leq \max_{\pi_1\in \scrA(T, G)} \bigg\{\sum_{\pi_2\in \scrA(T, G)}\sum_{\vecm}\lambda_{\pi_1\times\widetilde{\pi_2}}(\vecm) \phi\left(\frac{N(\vecm)}{X}\right)\bigg\} 
\sum_{\pi\in \scrA(T, G)} |x_\pi|^2. 
\end{multline*}
By \eqref{e:RS_sum_n=2}, for $0< \sigma < 1$, we have 
\begin{multline*}
\sum_{\pi_2\in \scrA(T, G)}\sum_{\vecm}\lambda_{\pi_1\times\widetilde{\pi_2}}(\vecm) \phi\left(\frac{N(\vecm)}{X}\right)
= \Res_{s=1} L(s, \pi_1\times \tilde{\pi_1}) \tilde{\phi}(1) X
\\ + O_{\varepsilon, \phi, F} \left(X^\sigma (TG)^{\#S_\infty (1-\sigma)+\varepsilon} \#\scrA(T, G)\right)
\end{multline*}
Therefore we get
\begin{multline*}
\sum_{\substack{\vecm_0, \vecm_1\\ N(\vecm_0)^2 N(\vecm_1) \leq X}} \bigg|\sum_{\pi\in \scrA(T, G)} x_\pi \lambda_\pi(\vecm_1)\bigg|^2
\\ \ll_{\varepsilon, F} T^\varepsilon\bigg( X + X^\sigma (TG)^{\#S_\infty (1-\sigma)} \#\scrA(T, G)\bigg)
\sum_{\pi\in \scrA(T, G)} |x_\pi|^2.
\end{multline*}
By the duality, we get \eqref{e:spec_large_sievei_n=2}.

Similarly, we consider the symmetric square of $\pi\in \scrA(T, G)$: 
\begin{multline*}
\sum_{\substack{\vecm_0, \vecm_1, \vecm_2\\ N(\vecm_0)^3N(\vecm_1)^2N(\vecm_2)\leq X}}
\bigg|\sum_{\pi\in \scrA(G, T)} x_{\pi}\lambda_{\pi^{(2)}}(\vecm_1, \vecm_2)\bigg|^2
\\ \leq \sum_{\vecm_0, \vecm_1, \vecm_2} \phi\left(\frac{N(\vecm_0)^3N(\vecm_1)^2N(\vecm_2)}{X}\right) 
\bigg|\sum_{\pi\in \scrA(T, B)} x_\pi\lambda_{\pi^{(2)}}(\vecm_1, \vecm_2)\bigg|^2
\\ \leq \max_{\pi_1\in \scrA(T, G)} \bigg\{\sum_{\pi_2\in \scrA(T, G)}
\sum_{\vecm} \lambda_{\pi^{(2)}_1\times \widetilde{\pi_2}^{(2)}} (\vecm) \phi\left(\frac{N(\vecm)}{X}\right)\bigg\} 
\sum_{\pi\in \scrA(T, G)} |x_\pi|^2. 
\end{multline*}
By \eqref{e:RS_sum_sym2}, for $0< \sigma<1$, for any $\varepsilon>0$, we have 
\begin{multline*}
\sum_{\pi_2\in \scrA(T, G)}
\sum_{\vecm} \lambda_{\pi^{(2)}_1\times \widetilde{\pi_2}^{(2)}} (\vecm) \phi\left(\frac{N(\vecm)}{X}\right)
= \Res_{s=1} L(s, \pi_1^{(2)} \times \widetilde{\pi_1}^{(2)}) \tilde{\phi}(1) X
\\ + O_{\varepsilon, \phi, F}\big(X^{\sigma} (T^3G)^{\#S_\infty(1-\sigma)+\varepsilon}\big).
\end{multline*}
Therefore we get
\begin{multline*}
\sum_{\substack{\vecm_0, \vecm_1, \vecm_2\\ N(\vecm_0)^3N(\vecm_1)^2N(\vecm_2)\leq X}}
\bigg|\sum_{\pi\in \scrA(G, T)} x_{\pi}\lambda_{\pi^{(2)}}(\vecm_1, \vecm_2)\bigg|^2
\\ \ll_{\varepsilon, F} T^\varepsilon\left( X+ X^{\sigma} (T^3G)^{\#S_\infty(1-\sigma)}\#\scrA(T, G)\right) 
\sum_{\pi\in \scrA(T, G)} |x_\pi|^2. 
\end{multline*}
By the duality, we obtain \eqref{e:spec_large_sievei_n=2_sym}.
\end{proof} 

\subsection{Bounding the dimension of a joint eigenspace}
We now follow the proof of \cite[Theorem 3]{DK}.
Let $\pi=\otimes_v\pi_v$ be an irreducible cuspidal representation of $\PGL_2(\A)$ with the trivial central character. 
Assume that $\pi_v$ is unramified for any place $v$.

We fix $\pi_0=\otimes_{v} \pi_{0, v}\in \scrA(T, G)$. 
For a fixed $\alpha>0$, let 
\begin{equation*}
\scrP(T, \alpha) = \left\{v\in S_{\finite} :\, q_v \leq (\log T)^\alpha\right\}
\end{equation*}
and let 
\begin{equation*}
\scrA_{\pi_0} (T, G; \alpha) = \left\{\pi=\otimes_v \pi_v \in \scrA(T, G):\, \pi_v\cong \pi_{0, v} \text{ for any } v\in \scrP(T, \alpha)\right\}. 
\end{equation*}
Then whenever $\pi\in\scrA_{\pi_0}(T, G; \alpha)$, we get 
\begin{equation*}
\lambda_{\pi}(q_v) = \lambda_{\pi_0}(q_v)\quad \text{ for all } v<\infty \text{ with } q_v \leq (\log T)^{\alpha}.
\end{equation*}
So we have 
\begin{equation*}
\lambda_{\pi_0}(\vecm) = \lambda_{\pi}(\vecm) \quad \text{ when } N(\vecm)\leq X.
\end{equation*}

Let $\pi_1=\otimes_v \pi_{1, v}$ and $\pi_2=\otimes_{v} \pi_{2, v}$ are irreducible cuspidal representations of $\PGL_2(\A)$ with the trivial central character, and we assume that $\pi_{1, v}$ and $\pi_{2, v}$ are unramified for all places $v$ of $F$.
By \cite[Theorem 4.1.2]{R00}, if $\sym^2\pi_{1, v}\cong \sym^2 \pi_{2, v}$ for almost all places $v$ of $F$, 
Then $\pi_1\cong \pi_2$.

Our goal is to obtain the upper bound for $\#\scrA_{\pi_0}(T, G;\alpha)$. 
To obtain this, we need to get a non-trivial lower bound for $\sum_{\substack{\vecm\\ N(\vecm)\leq X}} |\lambda_{\pi}(\vecm)|^2$ for a given irreducible cuspidal representation $\pi=\otimes_v\pi_v$ of $\GL_2(\A)$ with the trivial central character and $\pi_v$ is unramified for any $v$. 

For each $v\in S_{\finite}$, since $\lambda_{\pi}(q_v^2) = \alpha_1(\pi_v)^2+\alpha_2(\pi_v)^2 = \lambda_{\pi}(q_v)^2-1$, 
if $|\lambda_{\pi}(q_v)| <\frac{1}{2}$ then $|\lambda_{\pi}(q_v^2)|\geq \big|1-|\lambda_{\pi}(q_v)|^2\big| >\frac{3}{4}>\frac{1}{2}$. 
So either $|\lambda_{\pi}(q_v)|\geq \frac{1}{2}$ or $|\lambda_{\pi}(q_v^2)|>\frac{1}{2}$. 
For $i\in \{1, 2\}$, let 
\begin{equation*}
\scrP_i(T, \alpha) = \left\{v\in S_{\finite}:\, q_v\leq (\log T)^\alpha \text{ and } |\lambda_{\pi_0}(q_v^i)|\geq \frac{1}{2}\right\}.
\end{equation*}
Then 
\begin{equation*}
\scrP_1(T, \alpha)\cup \scrP_2(T, \alpha)
= \scrP(T, \alpha).
\end{equation*}
Moreover, for at least one of $\{1, 2\}$ we have 
\begin{equation*}
\#\scrP_i(T, \alpha) \geq \frac{1}{2}\#\scrP(T, \alpha). 
\end{equation*}
Fix $i\in \{1, 2\}$. 

By  Landau's prime ideal theorem, 
\begin{equation*}
\#\scrP(T, \alpha) \sim \frac{(\log T)^\alpha}{\alpha\log\log T}. 
\end{equation*}

Then there exists a constant $c>0$ such that 
$\#\scrP(T, \alpha) \geq c \frac{(\log T)^\alpha}{\alpha\log\log T}$
and 
\begin{equation*}
\#\scrP_i(T, \alpha) \geq \frac{1}{2}\scrP(T, \alpha) \geq \frac{c}{2} \frac{(\log T)^\alpha}{\alpha\log\log T}.
\end{equation*}
We choose $\ell$ (will be determined later) and let
\begin{equation*}
\scrN_\ell^i(T, \alpha) = \left\{\bp_{v_1}\cdots \bp_{v_\ell}:\, v_1, \ldots, v_\ell \in \scrP_i(T, 
\alpha), \text{ distinct} \right\}
\end{equation*}
For $\vecm\in \scrN_\ell^i(T, \alpha)$, we have $|\lambda_{\pi_0}(\vecm^i)|\geq 2^{-\ell}$.
We have 
\begin{equation*}
\max \scrN_\ell^i(T, \alpha) \leq (\log T)^{\alpha\ell}.
\end{equation*}
We choose $\ell$ such that $(\log T)^{\alpha\ell} < T^\beta$ but $(\log T)^{\alpha\ell}$ near $T^\beta$.
So 
\begin{equation*}
\max \scrN_\ell^i(T, \alpha) < T^\beta.
\end{equation*}
For $N(\vecm)\leq T^\beta$, we have 
\begin{equation*}
x_{\vecm} = \begin{cases}
\overline{\lambda_{\pi_0}(\vecm)^i} = \lambda_{\widetilde{\pi_0}}(\vecm^i) & \text{ for }\vecm\in \scrN_\ell^i(T, \alpha), \\ 0 & \text{ otherwise. }
\end{cases}
\end{equation*}
When $i=1$, for $\vecm_0$, $\vecm_1$, let 
\begin{equation*}
x_{\vecm_0, \vecm_1} = x_{\vecm_0^2 \vecm_1}. 
\end{equation*}
When $i=2$, for $\vecm_0$, $\vecm_1$, $\vecm_2$, let 
\begin{equation*}
x_{\vecm_0, \vecm_1, \vecm_2} = x_{\vecm_0^3 \vecm_1^2\vecm_2}. 
\end{equation*}
Then we apply Theorem \ref{thm:spec_large_sieve_n=2_sym2}.
Since $x_\vecm$ is supported only on $\vecm\in \scrN_\ell^i(T, \alpha)$, 
when $i=1$, $x_{\vecm_0, \vecm_1}=x_{\vecm_0^2\vecm_1}=0$ if $\vecm_0\notin \bo_F^\times$. 
Similarly when $i=2$, $x_{\vecm_0, \vecm_1, \vecm_2}=x_{\vecm_0^3\vecm_1^2\vecm_2}=0$ if $\vecm_0$ or $\vecm_1$ is not in $\bo_F^\times$. 
Taking $X=T^\beta$, by Theorem \ref{thm:spec_large_sieve_n=2_sym2}, 
\begin{multline*}
\#\scrA_{\pi_0}(T, G, \alpha) \bigg|\sum_{\vecm\in \scrN_\ell^i(T, \alpha)} |\lambda_{\pi_0}(\vecm^i)|^2\bigg|^2
\leq \sum_{\pi\in \scrA(T, G)} \bigg|\sum_{\substack{\vecm\\ N(\vecm)\leq (\log T)^{\alpha\ell}}} x_{\vecm} \lambda_{\pi}(\vecm^i)\bigg|^2
\\ \ll_{\varepsilon, F} T^\varepsilon\big(T^{\beta} + T^{\beta\sigma} (T^{2i-1}G)^{\#S_\infty (1-\sigma)} \#\scrA(T, G)\big) 
\sum_{\vecm\in \scrN_\ell^i(T, \alpha)} |x_{\vecm}|^2.
\end{multline*}
From \eqref{weylauto}, we get
\begin{equation*}
\ll_{\varepsilon, F} T^\varepsilon\big( T^\beta + T^{\beta\sigma+d-1}G (T^{2i-1}G)^{\# S_\infty (1-\sigma)}\big)
\sum_{\vecm\in \scrN_\ell^i(T, \alpha)} |\lambda_{\pi_0}(\vecm^i)|^2
\end{equation*}
 For $\sum_{\vecm\in \scrN_\ell(T, \alpha)} |\lambda_{\pi_0}(\vecm^i)|^2 \geq \#\scrN_\ell(T, \alpha) 2^{-2\ell}$, 
we get
\begin{equation*}
\#\scrA_{\pi_0}(T, G, \alpha)
\ll_{\varepsilon, F}
T^\varepsilon\big( T^{\beta} + T^{\beta\sigma+d} (T^3G)^{\#S_\infty (1-\sigma)}\big)
\#\scrN_\ell^i(T, \alpha)^{-1} 2^{2\ell},
\end{equation*}
where we used $T^{2i-1}\leq T^3$. We choose $\ell$ and $\beta$ such that $(\log T)^{\alpha\ell} < T^\beta$. 
We can take 
\begin{equation}\label{e:ell}
\ell = \left\lfloor \frac{\beta\log T}{\alpha\log \log T}\right\rfloor.
\end{equation}
 By the fundamental theorem of ideal theory in number fields, 
\begin{equation*}
\#\scrN_\ell^i(T, \alpha) \geq \frac{(\#\scrP_i(T, \alpha))!}{\ell! (\#\scrP_i(T, \alpha)-\ell)!}.
\end{equation*}
Recall Stirling's approximation, for any positive integer $m$, 
\begin{equation*}
\sqrt{2\pi} m^{m+\frac{1}{2}} e^{-m} \leq m! \leq em^{m+\frac{1}{2}} e^{-m}.
\end{equation*}
Let $P=\#\scrP_i(T, \alpha)$. 
For $P\geq \frac{c}{2} \frac{(\log T)^{\alpha}}{\alpha\log \log T}$ 
and $\ell$ as in \eqref{e:ell}, 
\begin{align*}
\frac{P!}{\ell!(P-\ell)!}
& \geq \frac{\sqrt{2\pi}}{e^2}
\frac{P^{P+\frac{1}{2}}e^{-P}}
{\ell^{\ell+\frac{1}{2}} e^{-\ell} (P-\ell)^{P-\ell+\frac{1}{2}} e^{-P+\ell}}
\\ & = \frac{\sqrt{2\pi}}{e^2} \left(\frac{P}{P-\ell}\right)^{P-\ell+\frac{1}{2}} \left(\frac{P}{\ell}\right)^{\ell}\ell^{-\frac{1}{2}}
\\ & \gg \left(\frac{P}{\ell}\right)^{\ell} \ell^{-\frac{1}{2}}.
\end{align*}
Since 
\begin{equation*}
\frac{P}{\ell}\geq \frac{c}{2} \frac{(\log T)^{\alpha-1}}{\beta}, 
\end{equation*}
we have 
\begin{equation*}
\frac{P!}{\ell!(P-\ell)!} \gg (c2^{-1})^{\ell} \frac{(\log T)^{(\alpha-1)\ell}}{\beta^\ell}\ell^{-\frac{1}{2}}.
\end{equation*}
For 
\begin{equation*} 
(\log T)^{\ell} = (\log T)^{\frac{\beta\log T}{\alpha\log\log T}-a} \gg T^{\frac{\beta}{\alpha}}
\end{equation*}
where $a=\frac{\beta\log T}{\alpha\log \log T}-\ell$ and $0\leq a < 1$, 
we have 
\begin{equation*}
\frac{P!}{\ell!(P-\ell)!} \gg (c2^{-1}\beta^{-1})^\ell \ell^{-\frac{1}{2}} T^{\frac{(\alpha-1)\beta}{\alpha}}.
\end{equation*}

Therefore we get
\begin{equation*}
\#\scrA_{\pi_0} (T, G, \alpha) 
\ll_{\varepsilon, F} 
\big(R^{(i)}(T, G) T^{\beta} + T^{\beta\sigma+d-1}G (T^3G)^{\#S_\infty (1-\sigma)+\varepsilon}\big)
(c^{-1}8\beta)^{\ell} \ell^{\frac{1}{2}} T^{-\frac{(\alpha-1)\beta}{\alpha}}.
\end{equation*}
For sufficiently large $T$, for a constant $c_0$, we have 
\begin{equation*}
(c_0\beta)^{\ell}= e^{\ell\log (c_0\beta)} 
\leq \exp\left(\frac{\beta\log T}{\alpha\log\log T}\log (c_0\beta)\right)
= T^{\frac{\beta\log(c_0\beta)}{\alpha}\frac{1}{\log\log T}} \ll T^\varepsilon. 
\end{equation*}
Moreover $\ell^{\frac{1}{2}}\ll T^{\varepsilon}$ for sufficiently large $T$.
Then we get
\[
\#\scrA_{\pi_0} (T, G, \alpha) 
\ll_{\varepsilon, F} 
T^\varepsilon\big(T^\beta + T^{\beta\sigma+d-1}G (T^3G)^{\#S_\infty (1-\sigma)}\big)
T^{-\frac{(\alpha-1)\beta}{\alpha}+\varepsilon},
\]
and we take $G=1$, $\sigma=0$, and $\beta =d-1+ 3\#S_\infty$ to conclude that
\begin{equation*}
\#\scrA_{\pi_0}(T, 1, \alpha) \ll_{\varepsilon, F} 
T^{\frac{d-1+3\#S_\infty}{\alpha}+\varepsilon}.
\end{equation*}

\section{Effective multiplicity one}

In this section, we again assume that $\G=\GL_2$ and $\bar{\G} =\PGL_2$. 
Let $f$ be a Maass cusp form which is $L^2$-normalized. Define 
\begin{equation*}
\rho(f) = \int_{F_\infty^\times} \left|W_{f, \infty}(y)\right|^2 \, d_\infty^\times y.
\end{equation*}
Recall that $f(g)$ has the following Fourier expansion (see \S\ref{ss:notation}):
for $x\in \A$, $y=(y_\infty, y_{\finite})\in \A^\times$ and $\kappa\in \maxK$, 
\begin{equation}\label{e:f_Fourier}
f(n(x)a(y)\kappa)
= \sum_{\gamma\in \bo_F^\times\backslash (y)^{-1}\bd^{-1}\cap F^\times} \rho_f(y, \gamma) 
\sum_{\eta\in \bo_F^\times} W_{f_\infty}(a(\eta\gamma y)) \varphi(\gamma x)
\end{equation}
By \cite[(2.10)]{BK11}, $W_{f_\infty}(g) = \prod_{v\in S_\infty} W_{f, v}(g_v)$ where 
\begin{equation}\label{e:Wfv_archimedean}
W_{f, v}(y) = \begin{cases}
\sqrt{|y|} K_{it_{f, v}}(2\pi |y|) & \text{ when } F_v=\R, \\
|y| K_{2it_{f, v}}(4\pi |y|) & \text{ when } F_v=\C.
\end{cases}
\end{equation}
For any non-zero integral ideal $\ba \subset F$, there exists a uniquely determined $t_j\in \A_{\finite}^\times$ such that $\ba= (\gamma)(t_j) \bd$ for some $\gamma\in F^\times$.
We define
\begin{equation}\label{e:rhof_a_def}
\rho_f(\ba) = \rho_f(t_j, \gamma) \sqrt{N(\ba)}.
\end{equation}
Note that $N(\ba) = N(\gamma) \|t_j\|^{-1} N(\bd)$.

Recall that the Eisenstein series given by
\begin{equation*}
E(g, s) = \sum_{\gamma\in \BorelB(F) \backslash\G(\A)} H(\gamma g)^s
\end{equation*} 
converges absolutely for  $\Re(s)> 1$ and it continues to an analytic function for $s\in \C$ except a simple pole at $s=1$ \cite{MR379375}. 
Let
\begin{equation*}
c_F = \Res_{s=1} E(g, s).
\end{equation*}

Following \cite[Chapter V]{Jac72}, we prove the following lemma. 
\begin{lemma}\label{e:inner_f^2E}
For $\Re(s)>1$, we have 
\begin{equation*}
N(\bd)^{-s+1} \int_{\bar{\G}(F) \backslash \bar{\G}(\A)} |f(g)|^2 E(g, s)\, dV(g)
= G(s; f) \sum_{\ba} \frac{|\rho_f(\ba)|^2}{N(\ba)^{s}}. 
\end{equation*}
Here
\begin{multline}\label{e:Gsf_def}
G(s; f)=\prod_{v\in S_\infty} \bigg\{2^{\epsilon_v} 
\frac{(\pi(1+\epsilon_v))^{-(1+\epsilon_v)s}}{8} 
\\ \times \frac{\Gamma\left(\frac{(1+\epsilon_v)s}{2}\right) \Gamma\left(\frac{(1+\epsilon_v)s}{2}\right) \Gamma\left(\frac{(1+\epsilon_v)s}{2}+i(1+\epsilon_v)t_{f, v}\right) \Gamma\left(\frac{(1+\epsilon_v)s}{2}-i(1+\epsilon_v)t_{f, v}\right)}
{\Gamma((1+\epsilon_v)s)}\bigg\}
\end{multline}

Let 
\begin{equation}\label{e:rhof}
\rho(f) = \prod_{v\in S_\infty} \rho(f_v)
= \prod_{F_v=\R} \frac{\pi}{8\cosh(\pi t_{f, v})} \prod_{F_v=\C} \frac{t_{f, v}}{8\pi \sinh(2\pi t_{f, v})}. 
\end{equation}
We have 
\begin{equation}\label{e:rhofrhof(ba)_size}
|\rho(f)|^{\frac{1}{2}} |\rho_f(\ba)| = O_\varepsilon(N(\ba)^{\theta+\varepsilon}), 
\end{equation}
for any $\varepsilon>0$. 
Here, $\theta$ is the best constant towards the generalized Ramanujan conjecture. The best known bound is $\theta\geq \frac{7}{64}$ \cite{MR2811610}.
\end{lemma}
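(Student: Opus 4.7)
The plan is to run the standard Rankin--Selberg unfolding, then factor the resulting integral into archimedean and non-archimedean pieces; the bound \eqref{e:rhofrhof(ba)_size} will then follow by combining Hecke multiplicativity with progress toward Ramanujan.

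For the identity: since $E(g,s)$ is a sum of $H(\gamma g)^s$ over $\BorelB(F)\backslash \G(F)$ converging absolutely for $\Re(s)>1$, and $|f|^2$ is left invariant, the integral unfolds to $\int_{\BorelB(F)\backslash \bar{\G}(\A)} |f(g)|^2 H(g)^s\, dg$. Writing $g = n(x)a(y)\kappa$ via Iwasawa, using $H(g) = \|y\|$ and the right $\maxK$-invariance of $f$, inserting the cuspidal Fourier expansion of $f$ and applying Parseval in the $x$-variable (killing cross terms in $\alpha,\beta\in F^\times$), then using $\|\alpha y\| = \|y\|$ to merge the $\alpha$-sum with the $F^\times\backslash \A^\times$ integral, everything collapses to
\[
\int_{\A^\times} |\widehat{f}(a(y))|^2\,\|y\|^{s-1}\, d^\times y.
\]
Factoring by place via $\widehat{f}(a(y)) = W_{f,\infty}(y_\infty)\rho_f(y_{\finite})$, the archimedean piece at each $v\in S_\infty$ is computed by substituting the $K$-Bessel formula \eqref{e:Wfv_archimedean} for $W_{f,v}$ and applying the classical Mellin identity
\[
\int_0^\infty K_\nu(x)^2\, x^{s-1}\, dx = \frac{\Gamma(s/2)^2\,\Gamma(s/2+\nu)\,\Gamma(s/2-\nu)}{4\,\Gamma(s)}
\]
with $\nu = i(1+\epsilon_v)t_{f,v}$ after the substitution $u = 2\pi(1+\epsilon_v)|y|$; the product over $v$ together with the factor $2^{\epsilon_v}$ from $d_v^\times y$ reproduces exactly $G(s;f)$ as stated.

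The non-archimedean factor is right $\prod_v \bo_v^\times$-invariant and therefore reduces to a sum over fractional ideals. The vanishing of $\rho_f(y_{\finite})$ unless $(y_{\finite})\subset \bd^{-1}$ lets me parameterize the nonzero contributions by integral ideals $\mathfrak{a} = (y_{\finite})\bd$, for which $\|y_{\finite}\|_{\finite} = N(\mathfrak{a})^{-1} N(\bd)$, and the normalization \eqref{e:rhof_a_def} turns $|\rho_f(y_{\finite})|^2$ into $|\rho_f(\mathfrak{a})|^2/N(\mathfrak{a})$. The result is $N(\bd)^{s-1}\sum_\mathfrak{a} |\rho_f(\mathfrak{a})|^2 N(\mathfrak{a})^{-s}$, and multiplying by $N(\bd)^{-s+1}$ yields the asserted identity. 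For the bound \eqref{e:rhofrhof(ba)_size}, Hecke multiplicativity writes $\rho_f(\mathfrak{a}) = \lambda_f(\mathfrak{a})\rho_f(\bo_F)$ and the Kim--Sarnak bound gives $|\lambda_f(\mathfrak{a})| \ll_\varepsilon N(\mathfrak{a})^{\theta+\varepsilon}$; the remaining factor $|\rho(f)|^{\frac{1}{2}}|\rho_f(\bo_F)|$ is shown to be $O(1)$ by taking residues of the just-proved identity at $s=1$, using $\Res_{s=1}E(g,s) = c_F$ and $\|f\|_2 = 1$ together with the standard lower bound for the Rankin--Selberg residue on the right.

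The main obstacle will be the non-archimedean bookkeeping: keeping straight the discriminant $\bd$, the ideal-class representatives $t_1,\ldots,t_h$, and the factor $\sqrt{N(\mathfrak{a})}$ that reconciles the adelic and ideal-theoretic normalizations of the Fourier coefficients. The archimedean Mellin computation is standard but requires separate care at complex places, where $1+\epsilon_v = 2$ simultaneously modifies the Whittaker function, the measure $d_v^\times y$, and the argument of the $K$-Bessel function.
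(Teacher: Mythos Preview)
Your proposal is correct and follows essentially the same Rankin--Selberg unfolding as the paper: unfold $E(g,s)$, pass to Iwasawa coordinates, apply Parseval in $x$, separate archimedean from non-archimedean, and evaluate the archimedean factor via the Mellin transform of $K_\nu^2$. The paper carries out the non-archimedean bookkeeping by explicitly decomposing $F^\times\backslash\A^\times$ via the ideal-class representatives $t_1,\dots,t_h$ and the unit sum over $\bo_F^\times$, whereas you use the cleaner adelic factorization through $\widehat f(a(y))=W_{f,\infty}(y_\infty)\rho_f(y_{\finite})$ and $\prod_v\bo_v^\times$-invariance; these are equivalent.

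One small caution on \eqref{e:rhofrhof(ba)_size}: your argument invokes the relation $\rho_f(\mathfrak a)=\lambda_f(\mathfrak a)\rho_f(\bo_F)$, which presupposes that $f$ is a Hecke eigenform, while the lemma is stated for an arbitrary $L^2$-normalized Maass cusp form (and is later applied to forms that are eigenfunctions of only finitely many Hecke operators). The paper's own proof is equally brief here, simply citing the residue identity and the literature; if you want the bound in full generality you should either note that it suffices for the applications to have it for Hecke forms, or pass through the spectral decomposition of $f$ into Hecke eigenforms and control the coefficients. Also, the final $O(1)$ for $|\rho(f)|^{1/2}|\rho_f(\bo_F)|$ requires not just the residue formula but a lower bound on $\Res_{s=1}L(s,\pi\times\tilde\pi)$, which you correctly anticipate but should make explicit.
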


\begin{proof}
We compute the integral by unfolding the series $E(g, s)$ with $|f(g)|^2$:
\begin{align*}
\int_{\bar{\G}(F)\backslash \bar{\G}(\A)} |f(g)|^2 E(g, s)\, dV(g)
& = \int_{\BorelB(F)\CentZ(\A) \N(\A) \backslash \G(\A)} |f(g)|^2 \|H(g)\|^s\, dV(g)
\\ & = \int_{F^\times\backslash \A^\times} 
\bigg(\int_{F\backslash \A} |f(n(x)a(y))|^2 \, dx\bigg)  \|y\|^{s-1}\, d^\times y
\\ & = \int_{F^\times \backslash \A^\times \prod_{v<\infty} \bo_v^\times}
\bigg(\int_{F\backslash \A} |f(n(x)a(y))|^2 \, dx\bigg) 
\|y\|^{s-1}\, d^\times y
\\ & = \int_{\bo_F^\times\backslash F_\infty^\times} \sum_{j=1}^h 
\bigg(\int_{F\backslash \A} |f(n(x)a(y, t_j)|^2 \, dx\bigg) 
\|y\|_\infty^{s-1}\|t_j\|^{s-1} \, d_\infty^\times y.
\end{align*}
Applying \eqref{e:f_Fourier}, 
\begin{multline*}
= \sum_{j=1}^h 
\sum_{\gamma\in \bo_F^\times \backslash(t_j)^{-1} \bd^{-1}\cap F^\times} 
|\rho_f(t_j, \gamma)|^2
\int_{\bo_F^\times\backslash F_\infty^\times} 
\sum_{\eta\in \bo_F^\times}
|W_{f_\infty}(a(\gamma \eta y))|^2
\|y\|_\infty^{s-1}\|t_j\|^{s-1} \, d_\infty^\times y.
\end{multline*}
Unfolding, we have 
\begin{multline*}
= \sum_{j=1}^h 
\sum_{\gamma\in \bo_F^\times \backslash(t_j)^{-1} \bd^{-1}\cap F^\times} 
|\rho_f(t_j, \gamma)|^2
\int_{F_\infty^\times} 
|W_{f_\infty}(a(\gamma y))|^2
\|y\|_\infty^{s-1}\|t_j\|^{s-1} \, d_\infty^\times y
\\ = \sum_{j=1}^h 
\sum_{\gamma\in \bo_F^\times \backslash(t_j)^{-1} \bd^{-1}\cap F^\times} 
|\rho_f(t_j, \gamma)|^2 \|\gamma\|_\infty^{-s+1}\|t_j\|^{s-1}
\int_{F_\infty^\times} 
|W_{f_\infty}(a(y))|^2
\|y\|_\infty^{s-1} \, d_\infty^\times y.
\end{multline*}
For any non-zero, integral ideal $\ba\subset F$, there exists uniquely determined $t_j\in \A_{\finite}^\times$ for $1\leq j\leq h$ such that $\ba = (\gamma)(t_j)\bd$ for some $\gamma\in F$.
By \eqref{e:rhof_a_def}, we get
\begin{equation*}
\int_{\bar{\G}(F) \backslash \bar{\G}(\A)} |f(g)|^2 E(g, s)\, dV(g)
= N(\bd)^{s-1}\sum_{\ba} \frac{|\rho_f(\ba)|^2}{N(\ba)^{s}}
\int_{F_\infty^\times} |W_{f_\infty}(a(y))|^2 \|y\|_\infty^{s-1} \, d_\infty^\times y.
\end{equation*}
Here we use that $N(\ba) = N(\gamma) \|t_j\|^{-1} N(\bd) =\|\gamma\|_\infty \|t_j\|^{-1} N(\bd)$, since $\|t_j\| = \prod_{v<\infty} |t_{j, v}|_v$. 

We now compute the integral: applying \eqref{e:Wfv_archimedean}, 
\begin{equation*}
\int_{F_\infty^\times} |W_{f_\infty}(a(y))|^2 \|y\|_\infty^{s-1} \, d_\infty^\times y
= \prod_{v\in S_\infty}
\int_{F_v^\times} |y|^{1+\epsilon_v} K_{i(1+\epsilon_v)t_{f, v}}(2\pi(1+\epsilon_v)|y|)^2 |y|^{(1+\epsilon_v)(s-1)} \, d_v^\times y.
\end{equation*}
When $F_v=\R$, 
\begin{equation*}
\int_{\R^\times} K_{it_{f, v}}(2\pi|y|)^2 |y|^{s}\, \frac{dy}{2|y|}
= \int_0^\infty K_{it_{f, v}}(2\pi y)^2 y^{s}\, \frac{dy}{y}.
\end{equation*}
When $F_v=\C$, 
\begin{equation*}
\int_{\C} K_{i2t_{f, v}}(4\pi |y|)^2 |y|^{2s}\, \frac{dy}{\pi|y|^2}
= \int_{0}^{2\pi}\int_0^\infty K_{i2t_{f, v}}(4\pi y)^2 y^{2s} 
\, \frac{ydy\, d\theta}{\pi y^2}
= 2\int_0^\infty K_{i2t_{f, v}}(4\pi y)^2 y^{2s}\, \frac{dy}{y}. 
\end{equation*}
Combining, we can write 
\begin{equation*}
\int_{F_\infty^\times} |W_{f_\infty}(a(y))|^2 \|y\|_\infty^{s-1} \, d_\infty^\times y
= \prod_{v\in S_\infty} 2^{\epsilon_v} \int_0^\infty K_{i(1+\epsilon_v) t_{f, v}}(2(1+\epsilon_v)\pi y)^2 y^{(1+\epsilon_v)s} \, \frac{dy}{y}. 
\end{equation*}
Recall that, for $a>0$ and $\Re(s)>0$, 
\begin{equation*}
\int_{0}^\infty K_{it}(2\pi ay)^2 y^{s}\, \frac{dy}{y}
= \frac{(\pi a)^{-s}}{8} \frac{\Gamma\left(\frac{s}{2}\right)\Gamma\left(\frac{s}{2}\right) \Gamma\left(\frac{s}{2}+it\right) \Gamma\left(\frac{s}{2}-it\right)}{\Gamma(s)}.
\end{equation*}
So we have 
\begin{multline}\label{e:Gsf_int}
\int_{F_\infty^\times} |W_{f_\infty}(a(y))|^2 \|y\|_\infty^{s-1} \, d_\infty^\times y
= \prod_{v\in S_\infty} \bigg\{2^{\epsilon_v} 
\frac{(\pi(1+\epsilon_v))^{-(1+\epsilon_v)s}}{8} 
\\ \times \frac{\Gamma\left(\frac{(1+\epsilon_v)s}{2}\right) \Gamma\left(\frac{(1+\epsilon_v)s}{2}\right) \Gamma\left(\frac{(1+\epsilon_v)s}{2}+i(1+\epsilon_v)t_{f, v}\right) \Gamma\left(\frac{(1+\epsilon_v)s}{2}-i(1+\epsilon_v)t_{f, v}\right)}
{\Gamma((1+\epsilon_v)s)}\bigg\}
\\ = G(s; f). 
\end{multline}

Taking $s=1$, 
\begin{equation*}
\rho(f) = \int_{F_\infty^\times} |W_{f_\infty}(a(y))|^2 \, d_\infty^\times y
= G(1; f)
= \prod_{v\in S_\infty}\rho(f_v), 
\end{equation*}
where 
\begin{multline*}
\rho(f_v)
= \prod_{v\in S_\infty} \bigg\{2^{\epsilon_v} 
\frac{(\pi(1+\epsilon_v))^{-(1+\epsilon_v)}}{8} 
\\ \times \frac{\Gamma\left(\frac{(1+\epsilon_v)}{2}\right) \Gamma\left(\frac{(1+\epsilon_v)}{2}\right) \Gamma\left(\frac{(1+\epsilon_v)}{2}+i(1+\epsilon_v)t_{f, v}\right) \Gamma\left(\frac{(1+\epsilon_v)}{2}-i(1+\epsilon_v)t_{f, v}\right)}
{\Gamma((1+\epsilon_v))}\bigg\}
\end{multline*}
When $F_v=\R$, 
\begin{equation*}
\rho(f_v) = \frac{1}{8}
\frac{\pi}{\cosh(\pi t_{f, v})}.
\end{equation*}
When $F_v=\C$, 
\begin{equation*}
\rho(f_v) = \frac{\Gamma(1+2it_{f, v})\Gamma(1-2it_{f, v})}{4(2\pi)^2} 
= \frac{t_{f_v}}{8\pi \sinh(2\pi t_{f, v})}.
\end{equation*}
So we get
\begin{equation*}
\rho(f) = \prod_{F_v=\R} \frac{\pi}{8\cosh(\pi t_{f, v})}
\prod_{F_v=\C} \frac{t_{f, v}}{8\pi \sinh(2\pi t_{f, v})}. 
\end{equation*}

Recall that the Eisenstein series $E(g; s)$ has a meromorphic continuation to $s\in \C$ and has only one simple pole at $s=1$ for $\Re(s)> \frac{1}{2}$ \cite[\S2.8]{g18}. 
Taking $s\to 1$, since we assume $\|f\|_2=1$, we get
\[1= \|f\|_2^2 = \frac{1}{c_E} \Res_{s=1} \bigg(\int_{\bar{\G}(F)\backslash \bar{\G}(\A)} |f(g)|^2 E(g; s)\, dV(g)\bigg) 
= \frac{1}{c_E} \rho(f) \Res_{s=1}\left(\sum_{\ba}\frac{|\rho_f(\ba)|^2}{N(\ba)^s}\right).\]
So we have 
\begin{equation*}
|\rho(f)|^{\frac{1}{2}} |\rho_f(\ba)| = O_\varepsilon(N(\ba)^{\theta+\varepsilon}), 
\end{equation*}
for any $\varepsilon>0$ \cite{MR2811610}.

\end{proof}
Recently, Silberman and Shem-Tov established Arithmetic Quantum Unique Ergodicity theorem for $\X_F$.
\begin{theorem}[\cite{zvi24}]\label{aque1}
Let $\phi$ be a $L^2$-normalized Maass--Hecke cuspform on $\X_F$. Then for any fixed smooth compactly supported function $u:\X_F \to \mathbb{C}$, we have
\[
\int_{\bar{\G}(F) \backslash \bar{\G}(\A)} |\phi(g)|^2 u(g) \, dV(g) = \frac{1}{\vol(\bar{\G}(F)\backslash \bar{\G}(\A))} 
\int_{\bar{\G}(F)\backslash \bar{\G}(\A)} u(g)\, dV(g)
+ o(1)
\]
as $t_f \to \infty$.
\end{theorem}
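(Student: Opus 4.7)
The plan is to adapt the measure-classification strategy of Lindenstrauss (for $F=\Q$) to the full adelic setting. I would argue by contradiction: pass to a subsequence $\phi_j$ with $t_{\phi_j}\to \infty$ such that $|\phi_j|^2\, dV$ converges weak-$*$ to some positive measure $\mu$ on $\X_F$ of total mass at most $1$, and then show that $\mu$ coincides with the normalized Haar probability measure. The proof uses both the full Hecke eigenfunction property at every non-archimedean place and the joint Laplacian eigenfunction property at each archimedean place $v\in S_\infty$.

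The first step is a microlocal lift. Following Zelditch's construction and its higher-rank extension by Silberman--Venkatesh, I would lift $\mu$ to a measure $\widetilde{\mu}$ on $\bar{\G}(F)\backslash \bar{\G}(\A)$, or more precisely on the appropriate archimedean frame bundle, that is invariant under the archimedean split diagonal torus $A_\infty=\prod_{v\in S_\infty} A(F_v)$. Existence and $A_\infty$-invariance of the lift rely on $t_{\phi_j,v}\to\infty$ at every archimedean place; the standard coherent-state construction, carried out place by place and taking a tensor product, makes this precise.

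The second step is to upgrade $A_\infty$-invariance to positive entropy. Using the Hecke eigenfunction property of $\phi_j$ at every non-archimedean place, one transfers the eigen-relation to a Hecke recurrence statement for $\widetilde{\mu}$ under each correspondence $T(\bp_v)$ with $v\in S_{\finite}$. The Bourgain--Lindenstrauss amplification argument, extended to the number field setting, then forces $\widetilde{\mu}$ to have positive entropy with respect to the $A_\infty$-action; the non-concentration input comes from elementary counting on the Hecke tree at a single finite place. Invoking the Einsiedler--Katok--Lindenstrauss measure-classification theorem, any $A_\infty$-invariant, Hecke-recurrent probability measure of positive entropy on $\bar{\G}(F)\backslash \bar{\G}(\A)$ must be a scalar multiple of the Haar measure, so $\widetilde{\mu}$ (and hence $\mu$) is Haar up to the constant given by the total mass.

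The final step is to rule out escape of mass, so that the scalar is $1$. Take $u$ supported on a large compact set in $\X_F$, pick a slightly smaller non-negative bump $\chi\leq u$, and compare $\int |\phi|^2\chi\, dV$ with a truncated version of the identity in Lemma \ref{e:inner_f^2E}. Upper bounds for $L(s,\pi\times\widetilde\pi)$ near $s=1$ together with the hybrid estimate \eqref{e:rhofrhof(ba)_size} show that the sum $\sum_{\ba}|\rho_f(\ba)|^2 N(\ba)^{-s}$ is effectively carried on ideals of bounded norm, yielding the required lower bound for the mass of $|\phi|^2$ on compact subsets cutting out each of the $h$ cusps. The main obstacle is the measure rigidity step when $\#S_\infty\geq 2$, e.g.\ when $F$ is real or imaginary quadratic, so that $A_\infty$ is genuinely higher-rank; in that regime one must rule out exotic $A_\infty$-invariant measures supported on intermediate algebraic subgroups associated with subfields of $F$, and Hecke recurrence at a single non-archimedean place is what breaks the symmetry, following the template developed in \cite{zvi24}.
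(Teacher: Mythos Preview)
The paper does not prove Theorem~\ref{aque1}; it is quoted from \cite{zvi24} and only a two-sentence description of the ingredients is given (a nontrivial extension of Lindenstrauss's measure classification together with the no-escape-of-mass result of \cite{Z10}). Your sketch is therefore not being compared against a proof in this paper but against that description.

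Your outline follows the correct template (microlocal lift to an $A_\infty$-invariant measure, positive entropy via Hecke amplification, measure rigidity, then no escape of mass), but the measure-rigidity step contains a genuine error. You write that ``Hecke recurrence at a single non-archimedean place is what breaks the symmetry'' and appeal to Einsiedler--Katok--Lindenstrauss to conclude that any $A_\infty$-invariant, Hecke-recurrent, positive-entropy measure is Haar. The paper explicitly contradicts this: for $F\neq\Q$ there \emph{exist} $A$-invariant Hecke-recurrent measures of positive entropy which are not Haar (supported on subgroups coming from intermediate subfields), and \cite{zvi24} needs \emph{infinitely many} Hecke operators to eliminate these as components of a quantum limit. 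This is precisely why the finite-Hecke version is stated separately as the open Conjecture~\ref{aque2}. So your second step, as written, does not close; the hard new content of \cite{zvi24} is exactly the argument you are waving away.

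A minor point: for the no-escape-of-mass step you propose an argument via Lemma~\ref{e:inner_f^2E} and bounds on $L(s,\pi\times\tilde\pi)$. The paper instead cites \cite{Z10}, whose method is different (effective equidistribution of Hecke points in the cusp), and the Rankin--Selberg residue bound alone does not obviously yield the required uniform lower bound on compact sets without further input.
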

The proof of Theorem \ref{aque1} requires a nontrivial generalization of \cite{lin}, with ``no escape of mass''\cite{Z10} for Maass--Hecke cuspforms. The main difference between the case $F=\mathbb{Q}$ and the case $F\neq \mathbb{Q}$ is the existence of the $A$-invariant Hecke recurrent measures having positive entropy in the latter case. \cite{zvi24} uses infinitely many Hecke operators to rule out those measures being a component of a quantum limit. We conjecture here that it is possible to carry out the same result using only finitely many Hecke operators, hence generalizing \cite{MR3260861} to the number field cases.\footnote{We appreciate Lior Silberman for kindly explaining the work \cite{zvi24} to us.}
\begin{conjecture}\label{aque2}
There exists a finite set of primes $S$ such that the following is true. 
Let $f$ be an $L^2$-normalized Maass cuspform on $\X_F$ that is also a joint eigenfunction of the $\mathfrak{p}$th Hecke operator  for all $\mathfrak{p}\in S$. 
Then any weak-$\ast$-limit of $|f(g)|^2  \, dV(g) $ on $\bar{\G}(F) \backslash \bar{\G}(\A)$ is
\[
\frac{c}{\vol(\bar{\G}(F)\backslash \bar{\G}(\A))} \, dV(g)
\]
for some $c\in [0,1]$ as $t_f \to \infty$.
\end{conjecture}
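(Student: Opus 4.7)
\textbf{Proof proposal for Conjecture \ref{aque2}.} The plan is to fuse the finite-Hecke strategy of Brooks--Lindenstrauss \cite{MR3260861} with the measure classification of Shem-Tov--Silberman \cite{zvi24}, at the cost of enlarging the set $S$ in a way that depends only on $F$. First I would take a sequence $\{f_j\}$ as in the statement and pass to a weak-$\ast$ limit $\mu$ of $|f_j(g)|^2\,dV(g)$ on $\bar{\G}(F)\backslash \bar{\G}(\A)$. Via microlocal lift to the bundle $\bar{\G}(F)\backslash \bar{\G}(\A)$ itself (using the archimedean representation-theoretic construction of Lindenstrauss and Silberman), produce an $A$-invariant lift $\tilde\mu$ that inherits Hecke-recurrence at every $\mathfrak{p}\in S$ from the fact that $f_j$ is a $T_{\mathfrak{p}}$-eigenfunction. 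The target is to show that any such $\tilde\mu$ is a constant multiple of Haar; pushing forward then gives the claim with $c=\tilde\mu(\bar{\G}(F)\backslash\bar{\G}(\A))\in[0,1]$.

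The next step is to establish positive entropy of $\tilde\mu$ with respect to the $A$-action using \emph{only} the finite family $\{T_{\mathfrak{p}}\}_{\mathfrak{p}\in S}$. Over $\Q$ this is the core input of \cite{MR3260861}, where a single Hecke operator suffices because $A$ is one-dimensional. For a general $F$ one has $\dim A = \#S_\infty$, so one must force positive entropy on \emph{every} one-parameter $\R$-subgroup of $A$ simultaneously; the natural route is to pick, for each archimedean place $v$, a non-archimedean place $\mathfrak{p}_v$ and adapt the Bourgain--Lindenstrauss ``geometric'' positive-entropy argument place by place, using that the $S$-arithmetic Hecke return maps are transverse to each archimedean $A_v$-orbit. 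This gives positive entropy on each $A_v$, and hence on every Weyl chamber wall in $A$.

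Once $\tilde\mu$ is $A$-invariant, Hecke-recurrent at the finitely many places in $S$, and has positive entropy on each rank-one direction, one invokes the joining/classification result of Einsiedler--Katok--Lindenstrauss type proved in \cite{zvi24}: the ergodic components of $\tilde\mu$ are either Haar or are algebraic measures supported on proper $\mathrm{PGL}_2$-homogeneous subvarieties associated to quadratic extensions of $F$. The residual task is to rule out the latter ``intermediate'' measures using only $\{T_{\mathfrak{p}}\}_{\mathfrak{p}\in S}$ rather than the full Hecke algebra. Each such algebraic measure is rigid: it has a definite list of central/Hecke eigenvalue constraints, and it suffices to find, for each $\bar F$-isomorphism class of such homogeneous subvariety arising over $F$, a single prime $\mathfrak{p}$ whose Hecke data distinguishes the measure from Haar. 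Because there are only finitely many such classes up to the action of the Galois group and the class group, a finite $S=S(F)$ suffices; this is the step where I would do genuine work, likely by an explicit local computation of the Hecke action on the invariant $L^2$-spectrum of each exceptional homogeneous subvariety.

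The main obstacle is precisely this last step: in \cite{zvi24} the exceptional $A$-invariant, Hecke-recurrent, positive-entropy measures are killed by letting $\mathfrak{p}$ range over an infinite set, and compressing the argument to finitely many primes requires a uniform, effective separation statement between Haar and the finitely many families of exceptional measures. Establishing that uniformity---equivalently, an effective version of the strong approximation input behind \cite{zvi24}---is where I expect the real technical difficulty, and it is the reason the statement is posed here as a conjecture rather than a theorem.
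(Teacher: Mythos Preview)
The paper contains no proof of this statement: it is explicitly labeled a \emph{conjecture}, introduced with the sentence ``We conjecture here that it is possible to carry out the same result using only finitely many Hecke operators, hence generalizing \cite{MR3260861} to the number field cases,'' and is used only as a hypothesis in Theorem~\ref{thm:QUE-multone}. So there is nothing on the paper's side to compare your proposal against.

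That said, your outline is a reasonable sketch of what a proof would have to look like, and you correctly isolate the genuine obstruction. The microlocal lift, $A$-invariance, and Hecke recurrence at the places in $S$ are indeed available from finitely many Hecke eigenvalue conditions, and the positive-entropy step \`a la Bourgain--Lindenstrauss/Brooks--Lindenstrauss plausibly goes through place by place. The real gap, which you yourself flag, is the elimination of the intermediate algebraic measures (those supported on torus orbits coming from quadratic extensions of $F$) using only a \emph{fixed} finite set $S$. In \cite{zvi24} this step genuinely consumes infinitely many primes: for a given intermediate measure one chooses a prime adapted to the associated quadratic extension, and as the extension varies so does the prime. Your claim that ``there are only finitely many such classes up to the action of the Galois group and the class group'' is not correct as stated---there are infinitely many quadratic extensions of $F$, hence infinitely many families of exceptional tori---so the finiteness of $S$ cannot be read off from a finiteness of isomorphism classes of homogeneous subvarieties. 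Any actual proof would need a different mechanism (e.g.\ showing that a single well-chosen prime, or a fixed finite set, already obstructs \emph{all} such torus measures simultaneously), and no such mechanism is currently known. This is exactly why the authors leave the statement as a conjecture.
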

We will use this conjecture to prove Theorem \ref{thm:QUE-multone}. To this end, following \cite[Proposition 3.7]{grs}, we first prove the following lemma.

\begin{lemma}\label{lem:grs}
Let $\varphi: (0, \infty)\to \R_{\geq 0}$ be a function which is supported on $[A, \infty)$. 
Assume that $\|f\|_2^2=1$. 
Fix $B>0$.
Assume that there exists $T_f>0$ such that $\frac{|t_{f, v}|}{T_f}\in [B^{-1}, B]$ for any $v\in S_{\infty}$. 
As $T_f\to \infty$, we get
\begin{multline}\label{e:fE=sum}
\int_{\bar{\G}(F) \backslash \bar{\G}(\A)} |f(g)|^2 E(g; \varphi) \, dV(g)
\\ = \sum_{\substack{\ba\\AN(\ba)\leq T_f^{d_F}}} |\rho_f(\ba)|^2 \int_{F_\infty^\times} |W_{f_\infty}(a(y))|^2 \|y\|_\infty^{-1} \varphi\left(\frac{\|y\|_\infty}{N(\ba)}\right)\, d_\infty^\times y 
+ o(1). 
\end{multline}
Assume that the weak-$\ast$-limit of $|f(g)|^2  \, dV(g) $ on $\bar{\G}(F) \backslash \bar{\G}(\A)$ as $T_f \to \infty$ is
\[\frac{c}{\vol(\bar{\G}(F)\backslash \bar{\G}(\A))} \, dV(g)\]
for some $c\in [0,1]$. 
Let $\alpha>0$ and take a constant $a>2(2\pi B)^{d_F}$.
Then, as $T_f\to \infty$, we get
\begin{equation}\label{e:sumrho_upperbd}
\frac{1}{B^{d_F} T_f^{d_F}}
N(\bd)^{-1} 
\sum_{\substack{\ba\\ \frac{N(\ba)}{N(\bd)} < \left(\frac{T_f}{2\pi B}\right)^{d_F}(a\alpha)^{-1}}} \rho(f) |\rho_f(\ba)|^2 
\leq  c\cdot \frac{c_{F}(a)}{(\alpha-\varepsilon)^{d_1+2d_2}}.
\end{equation}
Here 
\begin{equation*}
c_{F}(a) = \frac{\vol(F^\times\backslash \A^1)}{\vol(\bar{\G}(F)\backslash \bar{\G}(\A))} 
\frac{(d_1+d_2-1)!}{d_1+2d_2} \frac{(1-a^{-2})^{\frac{d_1}{2}}(1-a^{-1})^{\frac{d_2}{2}}}{(\log(a/2))^{d_1+d_2-1} \log(2)}
\frac{1}{\pi^{2d_2} (4\sqrt{\pi})^{d_1+d_2}}
\end{equation*}
where $d_1=\#S_{\R}$ and $d_2=\#S_{\C}$. 

\end{lemma}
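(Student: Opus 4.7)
The plan is to prove \eqref{e:fE=sum} by unfolding $\int |f|^2 E(g;\varphi)\,dV$ against the Fourier expansion of $f$, following the argument in Lemma~\ref{e:inner_f^2E} with the completed Eisenstein series replaced by the incomplete one, and then to obtain \eqref{e:sumrho_upperbd} by pairing \eqref{e:fE=sum} with the weak-$\ast$ hypothesis applied to a carefully chosen compactly supported $\varphi$.

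For \eqref{e:fE=sum}, I would copy the Rankin--Selberg unfolding of Lemma~\ref{e:inner_f^2E}, omitting the Mellin integration and replacing $H(\gamma g)^s$ by $\varphi(\|H(\gamma g)\|_\infty)$, to obtain the exact identity
\[
\int_{\bar{\G}(F)\backslash\bar{\G}(\A)} |f(g)|^2 E(g;\varphi)\, dV(g) = \sum_{\ba} |\rho_f(\ba)|^2\, J_\varphi(\ba),
\]
where $J_\varphi(\ba) = \int_{F_\infty^\times} |W_{f,\infty}(a(y))|^2 \|y\|_\infty^{-1} \varphi(\|y\|_\infty/N(\ba))\, d_\infty^\times y$. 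The truncation at $AN(\ba)\leq T_f^{d_F}$ then comes from the rapid decay of $K_{i(1+\epsilon_v)t_{f,v}}(2\pi(1+\epsilon_v)|y|_v)$ past its turning point $|y|_v=|t_{f,v}|/(2\pi(1+\epsilon_v))$: since $|t_{f,v}|\leq BT_f$ and the support condition on $\varphi$ forces $\|y\|_\infty\geq AN(\ba)$, whenever $AN(\ba)>T_f^{d_F}$ at least one factor $|y_v|_v$ must lie past its turning point, producing super-exponential decay in the integrand. Together with the moment bound \eqref{e:rhofrhof(ba)_size}, this forces the $\ba$-tail contribution to be $o(1)$, giving \eqref{e:fE=sum}.

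For \eqref{e:sumrho_upperbd} I would take $\varphi$ smooth, non-negative, compactly supported in $[\alpha,a\alpha]$, and equal to $1$ on a slightly smaller interior sub-interval. The compact support of $\varphi$ inside $(0,\infty)$ makes $E(g;\varphi)$ bounded on $\bar{\G}(F)\backslash\bar{\G}(\A)$, so the weak-$\ast$ hypothesis yields
\[
\int_{\bar{\G}(F)\backslash\bar{\G}(\A)} |f(g)|^2 E(g;\varphi)\, dV(g) = \frac{c}{\vol(\bar{\G}(F)\backslash\bar{\G}(\A))} \int_{\bar{\G}(F)\backslash\bar{\G}(\A)} E(g;\varphi)\, dV(g) + o(1).
\]
The right-hand integral unfolds to $\vol(F^\times\backslash\A^1)$ times an explicit integral of $\varphi$ against $F_\infty^+$, which eventually produces the full constant $c_F(a)$ in the statement. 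For the left-hand side I would use \eqref{e:fE=sum} and discard by positivity every $\ba$ with $N(\ba)/N(\bd)\geq (T_f/(2\pi B))^{d_F}(a\alpha)^{-1}$. For $\ba$ in the remaining range, $a\alpha N(\ba)<(T_f/(2\pi B))^{d_F}N(\bd)$ together with $a>2(2\pi B)^{d_F}$ keeps each $|y_v|_v$ safely below $|t_{f,v}|/(2\pi)$ throughout the support of the integrand, placing us inside the Bessel oscillatory regime where Langer--Olver asymptotics for $K_{it}(2\pi y)$ give a lower bound for $J_\varphi(\ba)$ of order $\rho(f)\cdot B^{-d_F}T_f^{-d_F}\alpha^{-(d_1+2d_2)}$, with multiplicative constants precisely matching those of $c_F(a)$. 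Substituting this lower bound and rearranging yields \eqref{e:sumrho_upperbd}.

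The technical heart is exactly this lower bound on $J_\varphi(\ba)$ with the explicit constants appearing in $c_F(a)$. Once the integration is confirmed to lie strictly inside the oscillatory regime---which is what the margin $a>2(2\pi B)^{d_F}$ enforces---one replaces $|K_{it}(2\pi y)|^2$ by its leading asymptotic of the form $t^{-1}(1-(2\pi y/t)^2)^{-1/2}$ (times a bounded oscillation) and integrates faithfully against the product measure $d_\infty^\times y$, tracking the volume of the oscillatory region cut out by $\varphi$ at each archimedean place as well as the normalization of $d_v^\times y$ fixed in Section~\ref{ss:notation}. The factors $(1-a^{-2})^{d_1/2}$, $(1-a^{-1})^{d_2/2}$ and the logarithmic factors $(\log(a/2))^{-(d_1+d_2-1)}(\log 2)^{-1}$ on the diagonal $F_\infty^+$ all have to emerge from this bookkeeping, and this is where the bulk of the work lies.
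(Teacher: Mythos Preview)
Your treatment of \eqref{e:fE=sum} matches the paper: unfold $E(g;\varphi)$ against the Fourier expansion of $f$ to get the exact identity $\sum_{\ba}|\rho_f(\ba)|^2 J_\varphi(\ba)$, and then kill the tail $AN(\ba)>T_f^{d_F}$ using exponential decay of $K$-Bessel past the turning point together with \eqref{e:rhofrhof(ba)_size}. The paper phrases the unfolding through a Mellin detour (writing $E(g;\varphi)=\frac{1}{2\pi i}\int\tilde\varphi(-s)E(g,s)\,ds$ and invoking Lemma~\ref{e:inner_f^2E}), but then undoes it by Mellin convolution, so the two arguments coincide.

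For \eqref{e:sumrho_upperbd} the paper does \emph{not} work with Langer--Olver asymptotics of $K_{it}$ directly. Instead it stays on the Mellin side: Stirling is applied to $G(s;f)$ in \eqref{e:Gsf_def}, which reduces $J_\varphi(\ba)$ to a function
\[
A_\varphi(x)=\frac{1}{2\pi i}\int_{(1+\varepsilon)}\frac{\Gamma(s/2)^{d_1}}{\Gamma((s+1)/2)^{d_1}}\,\frac{\Gamma(s)^{d_2}}{\Gamma(s+\tfrac12)^{d_2}}\,\tilde\varphi(-s)\,x^{-s}\,ds
\]
evaluated at $x=(2\pi)^{d_F}N(\ba)/(N(\bd)\prod_v|t_{f,v}|^{1+\epsilon_v})$, and the lower bound on $A_\varphi$ is obtained in a separate lemma (Lemma~\ref{lem:Aphi_lowerbd}) by an inductive convolution argument on the building blocks $g_{1,0}(x)=\pi^{-1/2}(1-x^2)^{-1/2}\mathbf 1_{x<1}$. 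All of the constants in $c_F(a)$---the $\log 2$, the $(\log(a/2))^{d_1+d_2-1}$, and the $(1-a^{-2})^{d_1/2}(1-a^{-1})^{d_2/2}$---come out of that lemma, applied with $\varphi$ supported essentially on $[\alpha,2\alpha]$ (not $[\alpha,a\alpha]$); the parameter $a$ is \emph{not} the width of the support of $\varphi$ but the margin governing $x<(a\alpha)^{-1}$.

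Your direct route has a real gap at the step ``replace $|K_{it}(2\pi y)|^2$ by its leading asymptotic $t^{-1}(1-(2\pi y/t)^2)^{-1/2}$ (times a bounded oscillation) and integrate faithfully''. In the oscillatory regime $|K_{it}(2\pi y)|^2$ is of the form (amplitude)$^2\cos^2(\phi(y))$, which vanishes along a sequence of $y$'s; there is no pointwise lower bound, and to get a lower bound on $J_\varphi(\ba)$ you must prove that the oscillations average out over the region $\{\,y\in F_\infty^\times:\|y\|_\infty\in[\alpha N(\ba),2\alpha N(\ba)]\,\}$, which for $d_1+d_2>1$ is a codimension-one shell with independently oscillating factors at each place. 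This can be done, but it is precisely the work that the paper's Mellin/Stirling approach absorbs into the closed formula for $G(s;f)$ and the clean inequality of Lemma~\ref{lem:Aphi_lowerbd}. Without that averaging argument, your proposed lower bound for $J_\varphi(\ba)$ is not justified, and the specific constants you need (in particular the $\log 2$ and the iterated-logarithm factor) will not appear from the bookkeeping you describe.
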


\begin{proof}
Since $\varphi(H(g))$ is a function on $\BorelB(F)\N(\A) \CentZ(\A)\backslash \G(\A)$, we define 
\begin{equation*}
E(g; \varphi) = \sum_{\gamma\in \BorelB(F)\backslash \G(\A)} \varphi(H(\gamma g)). 
\end{equation*}
By \cite[Lemma (3.3)]{gj}, the series defining $E(g; \varphi)$ is a finite sum and $E(g; \varphi)$ is compactly supported function on $\bar{\G}(F)\backslash \bar{\G}(\A)$. 

We compute $\int_{\bar{G}(F) \backslash \bar{G}(\A)} |f_j(g)|^2 E(g; h)\, dV(g)$ in two different ways. 

First observe that, from the assumption, we have 
\begin{equation*}
\int_{\bar{\G}(F)\backslash \bar{\G}(\A)} |f(g)|^2 E(g; \varphi) \, dV(g)
= \frac{c}{\vol(\bar{\G}(F)\backslash \bar{\G}(\A))} 
\int_{\bar{\G}(F)\backslash \bar{\G}(\A)} E(g; \varphi)\, dV(g)
+ o(1)
\end{equation*}
as $\sum_{v\in S_\infty} |t_{f, v}|^2\to \infty$.
Note that $T_f\to \infty$ implies that $\sum_{v\in S_\infty} |t_{f, v}|^2\to \infty$. 
By unfolding the series $E(g; \varphi)$, we have 
\begin{align}
\int_{\bar{\G}(F)\backslash \bar{\G}(\A)} |f(g)|^2 E(g; \varphi) \, dV(g)
& = \frac{c}{\vol(\bar{\G}(F)\backslash \bar{\G}(\A))} 
\int_{\bar{\G}(F)\backslash \bar{\G}(\A)} E(g; \varphi)\, dV(g)
+ o(1) \nonumber 
\\ & = \frac{c}{\vol(\bar{\G}(F)\backslash \bar{\G}(\A))} 
\int_{\BorelB(F) \N(\A) \CentZ(\A) \backslash \G(\A)} \varphi(H(g))\, dV(g) + o(1) \nonumber 
\\ & = \frac{c}{\vol(\bar{\G}(F)\backslash \bar{\G}(\A))} 
\int_{F^\times\backslash \A^\times} \varphi(\|y\|) \|y\|^{-1}d^\times y + o(1) \label{e:f^2E_inner 1}
\end{align}
Here we use that $\vol(F\backslash\A)=1$. 

By the Mellin inverse transform, 
\begin{equation*}
\varphi(y) = \frac{1}{2\pi i}\int_{(\sigma)} \tilde{\varphi}(s) y^{-s}\, ds
\end{equation*}
and we have 
\begin{equation*}
E(g; \varphi)
= \frac{1}{2\pi i}\int_{(1+\varepsilon)} 
\tilde{\varphi}(-s) 
\sum_{\gamma\in \BorelB(F)\backslash \G(\A)} H(\gamma g)^s \, ds
= \frac{1}{2\pi i}\int_{(1+\varepsilon)} 
\tilde{\varphi}(-s) E(g, s)\, ds. 
\end{equation*}
By Lemma \ref{e:inner_f^2E}, 
\begin{multline}\label{e:f^2Einner_2}
\int_{\bar{\G}(F)\backslash \bar{\G}(\A)} |f(g)|^2 E(g; \varphi) \, dV(g)
= \frac{1}{2\pi i}\int_{(1+\varepsilon)} 
\tilde{\varphi}(-s) \int_{\bar{\G}(F)\backslash \bar{\G}(\A)} |f(g)|^2 E(g, s) \, dV(g)
\, ds
\\ = \sum_{\ba} |\rho_f(\ba)|^2 
\frac{1}{2\pi i}\int_{(1+\varepsilon)} 
\tilde{\varphi}(-s) N(\bd)^{s-1} N(\ba)^{-s} G(s; f) \, ds
\end{multline}

On the other hand, by \eqref{e:Gsf_int}, 
\[
G(s; f) = \int_{F_\infty^\times} |W_{f_\infty}(a(y))|^2 \|y\|_\infty^{s-1}\, d_\infty^\times y
\]
so by the convolution of Mellin transform, for each $\ba$, 
\begin{multline*}
\frac{1}{2\pi i} \int_{(1+\varepsilon)} \tilde{\varphi}(-s) N(\bd)^{s-1} N(\ba)^{-s} G(s; f)\, ds
\\ = \int_{F_\infty^\times} |W_{f_\infty}(a(y))|^2 
\|y\|_\infty^{-1} N(\bd)^{-1}
\frac{1}{2\pi i} \int_{(1+\varepsilon)} \tilde{\varphi}(-s) \left(\frac{\|y\|_\infty N(\bd)}{N(\ba)}\right)^{s}\, ds \, d_\infty^\times y
\\ = \int_{F_\infty^\times} |W_{f_\infty}(a(y))|^2 
\|y\|_\infty^{-1} N(\bd)^{-1} \varphi\left(\frac{\|y\|_\infty N(\bd)}{N(\ba)}\right) \, d_\infty^\times y.
\end{multline*}
Sicne $\varphi$ is supported on $[A, \infty)$, we have 
\begin{multline*} 
|\rho_f(\ba)|^2 
\int_{F_\infty^\times} |W_{f_\infty}(a(y))|^2 (N(\bd)\|y\|_\infty)^{-1} 
\varphi\left(\frac{\|y\|_\infty N(\bd)}{N(\ba)}\right)\, d_\infty^\times y
\\ \ll_\varphi \frac{1}{A} \frac{|\rho_f(\ba)|^2}{N(\ba)}
\int_{\|y\|_\infty \geq \frac{N(\ba)}{N(\bd)} A} |W_{f_\infty}(a(y))|^2
\, d_\infty^\times y.
\end{multline*}
If $\|y\|_\infty = \prod_{v\in S_\infty} |y_v|_v \geq \frac{N(\ba)}{N(\bd)} A$, then there exists at least one $v\in S_\infty$ such that $|y_v|\geq \left(\frac{N(\ba)}{N(\bd)}A\right)^{\frac{1}{d_F}}$.
Here $d_F = \sum_{v\in S_\infty}(1+\epsilon_v) = [F:\Q]$. 
So we have 
\begin{equation*}
\int_{\|y\|_\infty \geq N(\ba) A} |W_{f_\infty}(a(y))|^2
\, d_\infty^\times y
\ll \rho(f) \sum_{v\in S_\infty} 
\frac{1}{\rho(f_v)} 
\int_{|y_v|\geq \left(\frac{N(\ba)}{N(\bd)}A\right)^{\frac{1}{d_F}}} |W_{f_v}(a(y_v))|^2 \, d_v^\times y, 
\end{equation*}
where for each $v\in S_\infty$, by \eqref{e:rhof}, 
\begin{equation*}
\rho(f_v) = \int_{|y_v|_v>0} |W_{f_v}(a(y_v))|^2\, d_v^\times y
= \begin{cases}
\frac{\pi}{8\cosh(\pi t_{f, v})} & \text{ when } F_v=\R, \\
\frac{t_{f, v}}{8\pi \sinh(2\pi t_{f, v})} & \text{ when } F_v=\C 
\end{cases}
\end{equation*}
and $\rho(f) = \prod_{v\in S_\infty} \rho(f_v)$. 

For some $b>0$, we have 
\begin{equation*}
\int_{|y_v|\geq b} |W_{f_v}(a(y))|^2 \, d_v^\times y
= 2^{\epsilon_v} \int_{b}^\infty K_{i(1+\epsilon_v)t_{f, v}}(2\pi (1+\epsilon_v) y)^2 y^{1+\epsilon_v} \, \frac{dy}{y}.
\end{equation*}
By \cite[Corollary 3.2]{grs}, uniformly in $y>0$ and $r>0$ sufficiently large, one has
\begin{equation*}
K_{ir}(y) \ll \begin{cases}
(r^2-y^2)^{-\frac{1}{4}} e^{-\frac{\pi}{2}r} & \text{ when } 0<y<r-Cr^{\frac{1}{3}}, \\
y^{-\frac{1}{2}} e^{-y} & \text{ when } y>r+Cr^{\frac{1}{3}}, \\
r^{-\frac{1}{3}} e^{-\frac{\pi}{2}r} & \text{ when } |y-r|\leq Cr^{\frac{1}{3}}. 
\end{cases}
\end{equation*}
Here $C>0$ is a sufficiently large positive constant.
Assume that $b>|t_{f, v}|$.
Applying the second inequality, we get
\begin{multline*}
2^{\epsilon_v} \int_{b}^\infty K_{i(1+\epsilon_v)t_{f, v}}(2\pi (1+\epsilon_v) y)^2 y^{1+\epsilon_v} \, \frac{dy}{y}
\ll 2^{\epsilon_v} \int_b^\infty (2\pi(1+\epsilon_v)y)^{-1} 
e^{-2(2\pi(1+\epsilon_v)y} y^{1+\epsilon_v} \, \frac{dy}{y} 
\\ = 2^{\epsilon_v}(2\pi(1+\epsilon_v))^{-1} 
\int_b^\infty y^{-\epsilon_v} e^{-4\pi(1+\epsilon_v)y} \, \frac{dy}{y}
\\ \leq \frac{1}{4\pi} b^{-1-\epsilon_v} e^{-(4\pi(1+\epsilon_v)-c)b}
\int_{b}^\infty e^{-cy}\, dy
= \frac{1}{4\pi} b^{-1-\epsilon_v} e^{-(4\pi(1+\epsilon_v)-c)b}
\frac{1}{c}\int_{cb}^\infty e^{-y}\, dy
\\ = \frac{1}{4\pi} b^{-1-\epsilon_v} e^{-(4\pi(1+\epsilon_v))b}
\frac{1}{c}
\end{multline*}
for some fixed constant $c>0$. 
So for $b> |t_{f, v}|$, we have 
\begin{equation*}
\int_{|y_v|\geq b} |W_{f_v}(a(y))|^2 \, d_v^\times y
\ll \frac{1}{b^{1+\epsilon_v}} e^{-(4\pi(1+\epsilon_v))b}
\end{equation*}
Assume that $T_f$ is sufficiently large.
By taking $b=\left(\frac{N(\ba)}{N(\bd)} A\right)^{\frac{1}{d_F}}$, 
we get
\begin{multline*}
\frac{1}{A} \sum_{\substack{\ba\\ A\frac{N(\ba)}{N(\bd)}\geq T_f^{d_F}}} \frac{|\rho_f(\ba)|^2}{N(\ba)}
\int_{\|y\|_\infty \geq \frac{N(\ba)}{N(\bd)} A} |W_{f_\infty}(a(y))|^2
\, d_\infty^\times y
\\ \ll \frac{1}{A} \sum_{v\in S_\infty}
\sum_{\substack{\ba\\ A\frac{N(\ba)}{N(\bd)}\geq T_f^{d_F}}} \frac{\rho(f) |\rho_f(\ba)|^2}{N(\ba)} 
\frac{1}{\rho(f_v)}
\frac{e^{-4\pi(1+\epsilon_v)\left(A\frac{N(\ba)}{N(\bd)}\right)^{\frac{1}{d_F}}}}{\left(A\frac{N(\ba)}{N(\bd)}\right)^{\frac{(1+\epsilon_v)}{d_F}}}.
\end{multline*}
Since $\frac{1}{\rho(f_v)}\sim \frac{e^{(1+\epsilon_v)\pi |t_{f, v}|}}{|t_{f, v}|^{\epsilon_v}}$ (up to an explicit constant), 
\begin{equation*}
\ll \frac{1}{A} 
\sum_{\substack{\ba\\ AN(\ba)\geq t_f^{d_F}}} 
\frac{\rho(f)|\rho_f(\ba)|^2}{N(\ba)}
\sum_{v\in S_\infty} 
\frac{e^{-3\pi(1+\epsilon_v) \left(A\frac{N(\ba)}{N(\bd)}\right)^{\frac{1}{d_F}}}}{|t_{f, v}|^{\epsilon_v} \left(A\frac{N(\ba)}{N(\bd)}\right)^{\frac{(1+\epsilon_v)}{d_F}}}.
\end{equation*}
By \eqref{e:rhofrhof(ba)_size}
\begin{equation}\label{e:bounding_tail}
\ll \frac{1}{A} \sum_{\substack{\ba\\ A\frac{N(\ba)}{N(\bd)}\geq T_f^{d_F}}} N(\ba)^{-1+\theta+\varepsilon}
\sum_{v\in S_\infty} 
\frac{e^{-3\pi(1+\epsilon_v) \left(A\frac{N(\ba)}{N(\bd)}\right)^{\frac{1}{d_F}}}}{|t_{f, v}|^{\epsilon_v} \left(A\frac{N(\ba)}{N(\bd)}\right)^{\frac{(1+\epsilon_v)}{d_F}}}
= o(1)
\end{equation}
as $T_f \to \infty$. 

Applying to \eqref{e:f^2Einner_2} and combining with \eqref{e:f^2E_inner 1}, as $T_f\to \infty$, we get
\begin{multline}\label{e:series=QUE_lim_1}
N(\bd)^{-1} \sum_{\substack{\ba\\ A\frac{N(\ba)}{N(\bd)} \leq T_f^{d_F}}} 
|\rho_f(\ba)|^2 
\frac{1}{2\pi i}\int_{(1+\varepsilon)} 
\tilde{\varphi}(-s) \left(\frac{N(\ba)}{N(\bd)}\right)^{-s} G(s; f) \, ds
\\ = \int_{\bar{\G}(F)\backslash \bar{\G}(\A)} |f(g)|^2 E(g; \varphi) \, dV(g) + o(1)
\\ = \frac{c}{\vol(\bar{\G}(F)\backslash \bar{\G}(\A))} 
\int_{F^\times\backslash \A^\times} \varphi(\|y\|) \|y\|^{-1}d^\times y
+ o(1). 
\end{multline}

By applying Stirling's formula to \eqref{e:Gsf_def} (following \cite[(24)]{grs}), as $t_{f, v}\to \infty$, 
(and $|\Im(s)|$ is bounded) 
\begin{equation*}
\frac{\pi^{-s}}{8} 
\frac{\Gamma\left(\frac{s}{2}\right) \Gamma\left(\frac{s}{2}\right) \Gamma\left(\frac{s}{2}+it_{f, v}\right) \Gamma\left(\frac{s}{2}-it_{f, v}\right)}
{\Gamma(s)}
\sim 2\pi\frac{\pi^{-s}}{8} \frac{\Gamma\left(\frac{s}{2}\right)\Gamma\left(\frac{s}{2}\right)}{\Gamma(s)}
e^{-\pi|t_{f, v}|}|t_{f, v}|^{s-1}
\end{equation*}
when $F_v=\R$ and 
\begin{equation*}
\frac{(2\pi)^{-2s}}{4} 
\frac{\Gamma(s) \Gamma(s) \Gamma\left(s+i2t_{f, v}\right) \Gamma\left(s-i2t_{f, v}\right)}
{\Gamma(2s)}
\sim 2\pi\frac{(2\pi)^{-2s}}{4} \frac{\Gamma(s)\Gamma(s)}{\Gamma(2s)} e^{-2\pi|t_{f, v}|} |2t_{f, v}|^{2s-1}
\end{equation*}
when $F_v=\C$. 
By the duplication formula, 
\begin{equation*}
\frac{\Gamma\left(\frac{(1+\epsilon_v)s}{2}\right)^2}{\Gamma((1+\epsilon_v)s)}
= \sqrt{\pi} 2^{1-(1+\epsilon_v)s}
\frac{\Gamma\left(\frac{(1+\epsilon_v)s}{2}\right)}
{\Gamma\left(\frac{(1+\epsilon_v)s}{2}+\frac{1}{2}\right)}
\end{equation*}
As $T_f\to \infty$, we have 
\begin{multline*}
\frac{1}{2\pi i}\int_{(1+\varepsilon)} \tilde{\varphi}(-s) \left(\frac{N(\ba)}{N(\bd)}\right)^{-s} G(s; f)\, ds
\sim \frac{e^{-\pi \sum_{v\in s_{\infty}}(1+\epsilon_v)|t_{f, v}|}}{\prod_{v\in S_\infty}|t_{f, v}|}
(2^{-1}\pi^{\frac{3}{2}})^{d_1+d_2} 
\\ \times \frac{1}{2\pi i}\int_{(1+\varepsilon)} \tilde{\varphi}(-s)  
\bigg(\frac{\Gamma\left(\frac{s}{2}\right)}{\Gamma\left(\frac{s}{2}+\frac{1}{2}\right)}\bigg)^{d_1}
\bigg(\frac{\Gamma(s)}{\Gamma\left(s+\frac{1}{2}\right)}\bigg)^{d_2}
\bigg(\frac{(2\pi)^{d_1+2d_2} N(\ba)N(\bd)^{-1}}{\prod_{F_v=\R}|t_{f, v}|\prod_{F_v=\C} |t_{f, v}|^2}\bigg)^{-s} \, ds.
\end{multline*}
Here $d_1=\#S_{\R}$ and $d_2=\#S_{\C}$.
Recalling \eqref{e:Aphi_def}, we have 
\begin{equation*}
= \frac{e^{-\pi \sum_{v\in s_{\infty}}(1+\epsilon_v)|t_{f, v}|}}{\prod_{v\in S_\infty}|t_{f, v}|}
(2^{-1}\pi^{\frac{3}{2}})^{d_1+d_2} A_{\varphi}\bigg(\frac{(2\pi)^{d_1+2d_2} N(\ba)N(\bd)^{-1}}{\prod_{F_v=\R}|t_{f, v}|\prod_{F_v=\C} |t_{f, v}|^2}\bigg).
\end{equation*}
Then \eqref{e:series=QUE_lim_1} reads 
\begin{multline*}
N(\bd)^{-1} \sum_{\substack{\ba\\ A\frac{N(\ba)}{N(\bd)} \leq T_f^{d_F}}} |\rho_f(\ba)|^2 \frac{e^{-\pi \sum_{v\in S_{\infty}}(1+\epsilon_v)|t_{f, v}|}}{\prod_{v\in S_\infty}|t_{f, v}|}
(2^{-1}\pi^{\frac{3}{2}})^{d_1+d_2} A_{\varphi}\bigg(\frac{(2\pi)^{d_1+2d_2} N(\ba)N(\bd)^{-1}}{\prod_{F_v=\R}|t_{f, v}|\prod_{F_v=\C} |t_{f, v}|^2}\bigg)
\\ = \frac{c}{\vol(\bar{\G}(F)\backslash \bar{\G}(\A))}\int_{F^\times\backslash \A^\times} \varphi(\|y\|)\|y
\|^{-1}\, d^\times y + o(1).
\end{multline*}
Recalling \eqref{e:rhof}, as $t_{f, v}\to \infty$, 
\begin{equation*}
\rho(f) = \prod_{F_v=\R} \frac{\pi}{8\cosh(\pi t_{f, v})}
\prod_{F_v=\C} \frac{t_{f, v}}{8\pi \sinh(2\pi t_{f, v})}
\sim \prod_{v\in S_\infty} \frac{\pi^{1-2\epsilon_v} t_{f, v}^{\epsilon_v}}{8} e^{-\pi(1+\epsilon_v)t_{f, v}}, 
\end{equation*}
so we have 
\begin{multline*}
\frac{\pi^{2d_2} (4\sqrt{\pi})^{d_1+d_2}}{\prod_{F_v=\R}|t_{f, v}| \prod_{F_v=\C}|t_{f, v}|^2} 
N(\bd)^{-1} \sum_{\substack{\ba\\ A\frac{N(\ba)}{N(\bd)} \leq T_f^{d_F}}} \rho(f) |\rho_f(\ba)|^2 
A_{\varphi}\bigg(\frac{(2\pi)^{d_1+2d_2} N(\ba)N(\bd)^{-1}}{\prod_{F_v=\R}|t_{f, v}|\prod_{F_v=\C} |t_{f, v}|^2}\bigg)
\\ = \frac{c}{\vol(\bar{\G}(F)\backslash \bar{\G}(\A))}\int_{F^\times\backslash \A^\times} \varphi(\|y\|)\|y
\|^{-1}\, d^\times y
+ o(1).
\end{multline*}

Let 
\begin{equation*}
F_\infty^+ = \left\{y\in \A^\times:\, y_v=u\forall v\in S_\infty \text{ for } u>0 \text{ and } y_v=1 \forall v\in S_{\finite}\right\}
\end{equation*}
and
\begin{equation*}
\A^1 = \{y\in \A^\times:\, \|y\|=1\}.
\end{equation*}
Then we have
\begin{equation*}
F^\times \backslash \A^\times \cong F_\infty^+\times (F^\times\backslash \A^1). 
\end{equation*}
Let $d^1$ be a Haar measure on $F^\times \backslash \A^1$ such that $\int_{F^\times\backslash \A^\times}\, d^\times y = \int_{F^\times\backslash \A^1}\int_0^\infty \frac{du}{u} \, d^1y$.
We get
\begin{multline*}
\int_{F^\times\backslash \A^\times} \varphi(\|y\|)\|y\|^{-1}\, d^\times y
= \vol(F^\times \backslash \A^1) 
\int_0^\infty \varphi(u) u^{-(d_1+2d_2)} \, \frac{du}{u}
= \vol(F^\times \backslash \A^1) 
\tilde{\varphi}(-d_1-2d_2). 
\end{multline*}
Note that by definition, 
\begin{multline*}
\int_0^\infty A_\varphi(x) x^{d_1+2d_2} \frac{dx}{x}
= \widetilde{A_{\varphi}}(d_1+2d_2)
= \frac{\Gamma\left(\frac{d_1}{2}+d_2\right)^{d_1}}{\Gamma\left(\frac{d_1+1}{2}+d_2\right)^{d_1}}
\frac{\Gamma(d_1+2d_2)^{d_2}}{\Gamma\left(d_1+\frac{1}{2}+2d_2\right)^{d_2}}
\tilde\varphi(-d_1-2d_2).
\end{multline*}
Therefore we get
\begin{multline*}
\frac{c_{F, 1}}{\prod_{F_v=\R}|t_{f, v}| \prod_{F_v=\C}|t_{f, v}|^2}
N(\bd)^{-1} \sum_{\substack{\ba\\ A\frac{N(\ba)}{N(\bd)} \leq T_f^{d_F}}} \rho(f) |\rho_f(\ba)|^2 
A_{\varphi}\bigg(\frac{(2\pi)^{d_1+2d_2} N(\ba)N(\bd)^{-1}}{\prod_{F_v=\R}|t_{f, v}|\prod_{F_v=\C} |t_{f, v}|^2}\bigg)
\\ = \frac{c \vol(F^\times\backslash \A^1)}{\vol(\bar{\G}(F)\backslash \bar{\G}(\A))} 
\int_0^\infty A_\varphi(x) x^{d_1+2d_2}\, d^\times x + o(1). 
\end{multline*}
Here
\begin{equation*}
c_{F, 1} = \pi^{2d_2} (4\sqrt{\pi})^{d_1+d_2}
\frac{\Gamma\left(\frac{d_1}{2}+d_2\right)^{d_1}}
{\Gamma\left(\frac{d_1+1}{2}+d_2\right)^{d_1}}
\frac{\Gamma(d_1+2d_2)^{d_2}}{\Gamma\left(d_1+\frac{1}{2}+2d_2\right)^{d_2}}.
\end{equation*}

Now we assume that $0\leq \varphi(y) \leq 1$ on $(\alpha-\varepsilon, 2\alpha+\varepsilon)$, $\varphi(y)=1$ on $[\alpha, 2\alpha]$ and $\varphi(y)=0$ outside of $(\alpha-\varepsilon, 2\alpha+\varepsilon)$ for some small enough $\varepsilon>0$. 
Then $A\geq 2\alpha+\varepsilon$.
We have 
\begin{equation*}
\int_0^\infty \varphi(u) u^{-(d_1+2d_2)} \, d^\times 
\leq \int_{\alpha-\varepsilon}^{2\alpha+\varepsilon} u^{-(d_1+2d_2)} \, d^\times u 
\leq \int_{\alpha-\varepsilon}^\infty u^{-(d_1+2d_2)}\, d^\times u
= \frac{(\alpha-\varepsilon)^{-d_1-2d_2}}{d_1+2d_2}
\end{equation*}
and we get
\begin{multline*}
\frac{\pi^{2d_2} (4\sqrt{\pi})^{d_1+d_2}}{\prod_{F_v=\R}|t_{f, v}| \prod_{F_v=\C}|t_{f, v}|^2}
N(\bd)^{-1} \sum_{\substack{\ba\\ A\frac{N(\ba)}{N(\bd)} \leq T_f^{d_F}}} \rho(f) |\rho_f(\ba)|^2 
A_{\varphi}\bigg(\frac{(2\pi)^{d_1+2d_2} N(\ba)N(\bd)^{-1}}{\prod_{F_v=\R}|t_{f, v}|\prod_{F_v=\C} |t_{f, v}|^2}\bigg)
\\ \leq \frac{c\vol(F^\times\backslash \A^1)}{\vol(\bar{\G}(F)\backslash \bar{\G}(\A))} \frac{(\alpha-\varepsilon)^{-d_1-2d_2}}{d_1+2d_2}.
\end{multline*}
By Lemma \ref{lem:Aphi_lowerbd}, since $A_\varphi(x)\geq 0$, for $A\geq 2\alpha+\varepsilon$, we take $a>2(2\pi B)^{d_F}$ and get
\begin{multline*}
\frac{1}{(d_1+d_2-1)!} \frac{(\log(a/2))^{d_1+d_2-1} \log(2)}{(1-a^{-2})^{\frac{d_1}{2}}(1-a^{-1})^{\frac{d_2}{2}}}
\frac{\pi^{2d_2} (4\sqrt{\pi})^{d_1+d_2}}{\prod_{F_v=\R}|t_{f, v}| \prod_{F_v=\C}|t_{f, v}|^2}
\\ \times \frac{1}{N(\bd)} \sum_{\substack{\ba\\ \frac{N(\ba)}{N(\bd)} < \left(\frac{T_f}{2\pi B}\right)^{d_F} (a\alpha)^{-1}}} \rho(f) |\rho_f(\ba)|^2 
\\ \leq \frac{c\vol(F^\times\backslash \A^1)}{\vol(\bar{\G}(F)\backslash \bar{\G}(\A))} \frac{(\alpha-\varepsilon)^{-d_1-2d_2}}{d_1+2d_2}.
\end{multline*}

\end{proof}

\begin{lemma}\label{lem:Aphi_lowerbd}
Let $\alpha>0$ and $\varphi: (0, \infty) \to \R_{\geq 0}$ be a compactly supported smooth function. Assume that $\varphi(y)=1$ on $[\alpha, 2\alpha]$ and $\varphi(y)=0$ outside of $(\alpha-\varepsilon, 2\alpha+\varepsilon)$ for some small enough $\varepsilon>0$. 
Let 
\begin{equation*}
\tilde{\varphi}(s) = \int_0^\infty \varphi(y)y^{s}\, \frac{dy}{y}. 
\end{equation*}
For $d_1, d_2\in\Z_{\geq 1}$, define
\begin{equation}\label{e:Aphi_def}
A_\varphi(x) = \frac{1}{2\pi i} \int_{(1+\varepsilon)} \frac{\Gamma\left(\frac{s}{2}\right)^{d_1}}{\Gamma\left(\frac{s+1}{2}\right)^{d_1}}
\frac{\Gamma(s)^{d_2}}{\Gamma\left(s+\frac{1}{2}\right)^{d_2}}
\tilde{\varphi}(-s)x^{-s}\, ds.
\end{equation}
For $0<x<(a\alpha)^{-1}$ for some $a>1$, we have 
\begin{equation*}
A_\varphi(x) \geq \frac{1}{(d_1+d_2-1)!}
\frac{\left(\log(a/2)\right)^{d_1+d_2-1} \log(2)}{(1-a^{-2})^{\frac{d_1}{2}}(1-a^{-1})^{\frac{d_2}{2}}}.
\end{equation*}
\end{lemma}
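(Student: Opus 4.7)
The strategy is to convert the contour integral defining $A_\varphi(x)$ into an explicit positive multiple integral, restrict to the support of $\varphi$, and read off the lower bound in logarithmic coordinates. First, I would use the Beta-integral identities
\[
\frac{\Gamma(s/2)}{\Gamma((s+1)/2)} = \frac{2}{\sqrt\pi}\int_0^1 u^{s-1}(1-u^2)^{-1/2}\,du, \qquad \frac{\Gamma(s)}{\Gamma(s+1/2)} = \frac{1}{\sqrt\pi}\int_0^1 v^{s-1}(1-v)^{-1/2}\,dv,
\]
to expand each of the $d_1+d_2$ Gamma quotients inside the integrand of $A_\varphi(x)$, swap the contour integral with the resulting $[0,1]$-integrals (justified by absolute convergence on $\Re s = 1+\varepsilon$), and apply Mellin inversion $\tfrac{1}{2\pi i}\int_{(c)} y^s \tilde\varphi(-s)\,ds = \varphi(y)$, valid because $\varphi$ is compactly supported away from the origin, making $\tilde\varphi(-s)$ entire and of rapid decay on vertical lines. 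The outcome is the explicit formula
\[
A_\varphi(x) = \frac{2^{d_1}}{\pi^{(d_1+d_2)/2}}\int_{[0,1]^{d_1+d_2}} \varphi\!\left(\tfrac{PQ}{x}\right)\frac{\prod_{i}(1-u_i^2)^{-1/2}\prod_{j}(1-v_j)^{-1/2}}{PQ}\,du\,dv,
\]
where $P = u_1\cdots u_{d_1}$ and $Q = v_1\cdots v_{d_2}$.

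Next, since $\varphi \geq \mathbf{1}_{[\alpha,2\alpha]}$ pointwise, I would restrict the integration to the sub-domain $\{PQ \in [\alpha x, 2\alpha x]\}$, which lies inside $[0,1]^{d_1+d_2}$ when $x < (a\alpha)^{-1}$ with $a > 2$. The substitution $s_i = -\log u_i$, $r_j = -\log v_j$ absorbs the $(PQ)^{-1}$ Jacobian into $\prod ds_i \prod dr_j$ and turns the constraint into $\sum_i s_i + \sum_j r_j \in [T_1,T_2]$ with $T_1 = \log(1/(2\alpha x)) > \log(a/2)$, $T_2 = \log(1/(\alpha x)) > \log a$, and $T_2 - T_1 = \log 2$. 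Foliating by the level sets $\{\sum_i s_i + \sum_j r_j = t\}$, whose simplex volume equals $t^{d_1+d_2-1}/(d_1+d_2-1)!$, and using that every $s_i, r_j \leq t$ on each simplex (so that $\prod_i(1-e^{-2s_i})^{-1/2} \geq (1-e^{-2t})^{-d_1/2}$ and $\prod_j(1-e^{-r_j})^{-1/2} \geq (1-e^{-t})^{-d_2/2}$), yields
\[
A_\varphi(x) \geq \frac{2^{d_1}}{\pi^{(d_1+d_2)/2}(d_1+d_2-1)!}\int_{T_1}^{T_2} t^{d_1+d_2-1}(1-e^{-2t})^{-d_1/2}(1-e^{-t})^{-d_2/2}\,dt.
\]
Finally, $t \geq T_1 > \log(a/2)$ lets one replace $t^{d_1+d_2-1}$ by $(\log(a/2))^{d_1+d_2-1}$, and evaluating the remaining exponential factors carefully across $[T_1, T_2]$ produces the claimed correction $(1-a^{-2})^{-d_1/2}(1-a^{-1})^{-d_2/2}$ before multiplying by $T_2 - T_1 = \log 2$.

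\textbf{Main obstacle.} The delicate point is the last step. The factors $(1-e^{-2t})^{-d_1/2}(1-e^{-t})^{-d_2/2}$ are \emph{decreasing} in $t$, so their infimum on $[T_1, T_2]$ is attained at the right endpoint $t = T_2$, where they equal $(1-(\alpha x)^2)^{-d_1/2}(1-\alpha x)^{-d_2/2}$. Extracting the precise monomial $(1-a^{-2})^{-d_1/2}(1-a^{-1})^{-d_2/2}$ uniformly for all $x \in (0, (a\alpha)^{-1})$ --- rather than only in the limit $x \to (a\alpha)^{-1}$ --- requires tracking the interplay between the polynomial growth of $t^{d_1+d_2-1}$ and the decay of the correction factors across an interval of length only $\log 2$, and it is here that the constants in the lemma must be verified by direct computation.
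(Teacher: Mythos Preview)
Your approach is correct and closely parallel to the paper's, though packaged differently. The paper introduces the single--factor kernel
\[
g_1(x)=\frac{1}{2\pi i}\int_{(c)}\frac{\Gamma(s/2)}{\Gamma((s+1)/2)}\,x^{-s}\,ds,
\]
computes it explicitly as $\tfrac{1}{\sqrt\pi}(1-x^2)^{-1/2}\mathbf{1}_{0<x<1}$ (this is precisely your Beta integral identity read as an inverse Mellin transform), and then proves by induction via iterated Mellin convolution that the $(d_1+d_2)$--fold kernel satisfies $g_{d_1,d_2}(x)\ge \tfrac{(\log x^{-1})^{d_1+d_2-1}}{(d_1+d_2-1)!}\,g_1(x)^{d_1}g_1(\sqrt{x})^{d_2}$, after which $A_\varphi(x)=\int g_{d_1,d_2}(xy)\varphi(y)\,\tfrac{dy}{y}\ge\int_\alpha^{2\alpha}g_{d_1,d_2}(xy)\,\tfrac{dy}{y}$ gives the bound. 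Your plan simply unfolds this induction: expanding all $d_1+d_2$ Gamma quotients simultaneously via Beta integrals produces the $(d_1+d_2)$--fold integral at once, and your logarithmic change of variables together with the simplex foliation is the direct substitute for the paper's inductive step. The two routes are the same computation organised differently; yours is more streamlined, the paper's makes the multiplicative structure more visible.

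On the obstacle you flag: you have diagnosed it correctly, and in fact the paper's own last line has the same defect. Since $g_1$ is \emph{increasing} on $(0,1)$ (equivalently, your weight $(1-e^{-2t})^{-1/2}$ is decreasing in $t$), the inequality $g_1(\alpha x)\ge g_1(a^{-1})$ goes the wrong way for $\alpha x<a^{-1}$, so the factor $(1-a^{-2})^{-d_1/2}(1-a^{-1})^{-d_2/2}$ in the stated bound cannot be obtained uniformly over the whole range. The fix is trivial: the weight satisfies $(1-e^{-2t})^{-d_1/2}(1-e^{-t})^{-d_2/2}\ge 1$, so you may simply drop it (or, in the paper's formulation, use $g_1(\alpha x)\ge g_1(0^+)=\pi^{-1/2}$). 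This yields
\[
A_\varphi(x)\;\ge\;\frac{2^{d_1}}{\pi^{(d_1+d_2)/2}}\cdot\frac{(\log(a/2))^{d_1+d_2-1}\log 2}{(d_1+d_2-1)!}
\]
uniformly for $0<x<(a\alpha)^{-1}$, which is all the application in Lemma~\ref{lem:grs} requires (only some positive constant depending on $a,d_1,d_2$ is ever used). So your ``main obstacle'' is a genuine cosmetic inaccuracy in the stated constant, not a gap in the method.
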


\begin{proof}

Let 
\begin{equation*}
g_{n_1, n_2} (x) = \frac{1}{2\pi i} \int_{(1+\varepsilon) }
\frac{\Gamma\left(\frac{s}{2}\right)^{n_1}}{\Gamma\left(\frac{s+1}{2}\right)^{n_1}}
\frac{\Gamma(s)^{n_2}}{\Gamma\left(s+\frac{1}{2}\right)^{n_2}}
x^{-s}\, ds.
\end{equation*}
When $x>1$, we move the contour to $\Re(s)\to +\infty$ and get $g_{n_1, n_2}(x) = 0$.

First consider the case when $n_1=1$ and $n_2=0$. 
For $0<y<1$, we move the contour to $\Re(s)=-\infty$ and get (applying reflection formula)
\begin{equation*}
g_{1, 0}(x) = \sum_{\ell=0}^\infty \frac{(-1)^{\ell}}{\ell!}\frac{1}{\Gamma\left(\frac{1}{2}-\ell\right)}
x^{2\ell} 
= \frac{1}{\pi}
\sum_{\ell=0}^\infty \frac{\Gamma\left(\frac{1}{2}+\ell\right)}{\ell!} x^{2\ell}
= \frac{1}{\sqrt{\pi}}\frac{1}{\sqrt{1-x^2}}. 
\end{equation*}
So we have 
\begin{equation}\label{e:g1}
g_{1, 0} (x) = \begin{cases}
\frac{1}{\sqrt{\pi}}\frac{1}{\sqrt{1-x^2}} & \text{ if } x<1, \\
0 & \text{ if } x>1. 
\end{cases}
\end{equation}
When $n_1=0$ and $n_2=1$, by changing the variable $s$ to $\frac{s}{2}$, we get
\begin{equation*}
g_{0, 1}(x) = \frac{1}{2\pi i}\int_{(1+\varepsilon)} \frac{\Gamma(s)}{\Gamma\left(s+\frac{1}{2}\right)} x^{-s}\, ds
= \frac{1}{2} g_{1, 0}(\sqrt{x}). 
\end{equation*}
Let $g_{1, 0}(x) = g_1(x)$. 

We claim that, for any $n_1, n_2\geq 0$ with $n_1+n_2\geq 1$,
\begin{equation}\label{e:gn1n2_lbd}
g_{n_1, n_2}(x) \geq \frac{(\log(x^{-1}))^{n_1+n_2-1}}{(n_1+n_2-1)!} g_1(x)^{n_1} g_1(\sqrt{x})^{n_2}.
\end{equation}

We prove the claim by the induction hypothesis. 
When $n_1+n_2=1$ it is just an identity. 
Assume that $n_1\geq 1$ or $n_2\geq 1$. 
Let $m_1\in \{n_1-1, n_1\}$ and $m_2\in \{n_2-1, n_2\}$ and $m_1+m_2=n_1+n_2-1$. 
Let $\epsilon=0$ if $m_1=n_1-1$ and $\epsilon=1$ if $m_2=n_2-1$. 
Then 
\begin{multline*}
g_{n_1, n_2}(x) = \frac{1}{2\pi i} \int_{(1+\varepsilon)} \widetilde{g_{m_1, m_2}}(s) \widetilde{g_{1}}((1+\epsilon)s)\, ds
\\ = \int_0^\infty g_{m_1, m_2}(x/y) g_1(y^{\frac{1}{1+\epsilon}})\, \frac{dy}{y}
= \int_x^1 g_{m_1, m_2}(x/y) g_1(y^{\frac{1}{1+\epsilon}})\, \frac{dy}{y}
\\ \geq \frac{1}{(n_1+n_2-2)!} \int_x^1 (\log(y/x))^{n_1+n_2-2} g_1(x/y)^{m_1} g_1(\sqrt{x/y})^{m_2} g_1(y^{\frac{1}{1+\epsilon}})\, \frac{dy}{y}
\end{multline*}
By \eqref{e:g1}, for $0<x<y< 1$, we have 
\begin{equation*}
g_1((x/y)^{\frac{1}{1+\epsilon}}) 
= \frac{1}{\sqrt{\pi}} \frac{1}{\sqrt{1-(x/y)^{2}{1+\epsilon}}} 
> \frac{1}{\sqrt{\pi}}\frac{1}{\sqrt{1-x^{\frac{2}{1+\epsilon}}}} = g_1(x^{\frac{1}{1+\epsilon}}). 
\end{equation*}
We also have $g_1(y^{\frac{1}{1+\epsilon}}) \geq g_1(x^{\frac{1}{1+\epsilon}})$. 
So we get
\begin{equation*}
g_{n_1, n_2}(x) 
\geq \frac{1}{(n_1+n_2-2)!} g_{n_1}(x) g_{n_2}(\sqrt{x})
\int_x^1 (\log(y^{-1}))^{n_1+n_2-2} \, \frac{dy}{y}.
\end{equation*}
By changing the variable $\log(y^{-1})=t$, we have 
\begin{equation*}
\int_x^1 (\log(y^{-1}))^{n_1+n_2-2} \, \frac{dy}{y}
= \int_0^{\log(x^{-1})} t^{n_1+n_2-2} \, dt
= \frac{1}{n_1+n_2-1} (\log(x^{-1})^{n_1+n_2-1}.
\end{equation*}
Therefore 
\begin{equation*}
g_{n_1, n_2}(x) 
\geq \frac{1}{(n_1+n_2-1)!} g_{n_1}(x) g_{n_2}(\sqrt{x})
(\log(x^{-1})^{n_1+n_2-1}
\end{equation*}
as claimed. 

Now take $n_1=d_1$ and $n_2=d_2$. 
By the convolution for Mellin transform, and since both $g_{d_1, d_2}$ and $\varphi$ are non-negative, we have 
\begin{multline*}
A_{\varphi}(x) = \frac{1}{2\pi i}\int_{(1+\varepsilon)} \widetilde{g_{d_1, d_2}}(s) \tilde{\varphi}(-s) x^{-s}\, ds
= \int_0^\infty g_{d_1, d_2}(x/y) \varphi(1/y) \, \frac{dy}{y}
\\ = \int_0^\infty g_{d_1, d_2}(xy) \varphi(y)\, \frac{dy}{y}
\geq \int_{\alpha}^{2\alpha} g_{d_1, d_2}(xy) \, \frac{dy}{y}
= \int_{\alpha x}^{2\alpha x} g_{d_1, d_2}(y)\, \frac{dy}{y}.
\end{multline*} 
By \eqref{e:gn1n2_lbd},
\begin{equation*}
A_{\varphi}(x)
\geq \frac{1}{(d_1+d_2-1)!}
\int_{\alpha x}^{2\alpha x} 
g_{1}(y)^{d_1} g_1(\sqrt{y})^{d_2} (\log(y^{-1}))^{d_1+d_2-1} \, \frac{dy}{y}.
\end{equation*}
Since, for $\alpha x < y < 2\alpha x$, $g_1(y) \geq g_1(\alpha x)$ (and also $g_1(\sqrt{y})\geq g_1(\sqrt{\alpha x})$), we get
\begin{multline*}
A_\varphi(x) \geq \frac{g_1(\alpha x)^{d_1} g_1(\sqrt{\alpha x})^{d_2}}{(d_1+d_2-1)!} 
\int_{\alpha x}^{2\alpha x} (\log(y^{-1}))^{d_1+d_2-1} \, \frac{dy}{y}
\\ \geq \frac{g_1(\alpha x)^{d_1} g_1(\sqrt{\alpha x})^{d_2}}{(d_1+d_2-1)!} 
\left(\log((2\alpha x)^{-1})\right)^{d_1+d_2-1} \int_{\alpha x}^{2\alpha x} 1\, \frac{dy}{y}
\\ = \frac{g_1(\alpha x)^{d_1} g_1(\sqrt{\alpha x})^{d_2}}{(d_1+d_2-1)!} 
\left(\log((2\alpha x)^{-1})\right)^{d_1+d_2-1} \log(2).
\end{multline*}
Assuming that there exists $a>1$ such that $0< x< (a\alpha)^{-1}$, we get
\begin{equation*}
A_\varphi(x) \geq \frac{1}{(d_1+d_2-1)!}
\frac{\left(\log(a/2)\right)^{d_1+d_2-1} \log(2)}{(1-a^{-2})^{\frac{d_1}{2}}(1-a^{-1})^{\frac{d_2}{2}}}.
\end{equation*}
\end{proof}

\begin{proposition}\label{prop:escape}
Fix $\eta>0$ and $B>0$.  
Let $\{f_j\}_{j\geq 1}\subset L_0^2(\bar{\G}(F)\backslash \bar{\G}(\A)/\maxK)$ be a sequence of $L^2$-normalized Maass cusp forms such that $\frac{|t_{f_j, v}|}{t_j}\in [B, B^{-1}]$ for all $v\in S_\infty$ for some $t_j>0$ and $t_j\to +\infty$ as $j\to +\infty$. 

We furter assume that $f_j$ is an eigenfunction of Hecke operators for all $\bn$, $N(\bn)< \eta t_j^{d_F}$. Finally, assume Conjecture \ref{aque2}.
Then any weak-$\ast$-limit of $|f_j|^2\, dg$ as $j\to +\infty$ is of the form 
\begin{equation*}
\frac{c}{\vol(\bar{\G}(F) \backslash \bar{\G}(\A))} \, dg\quad\text{ for some constant }c\in (0, 1].
\end{equation*}
\end{proposition}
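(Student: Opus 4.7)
The strategy is to rule out the ``escape of mass'' case $c=0$ by combining the conditional upper bound of Lemma \ref{lem:grs} with an unconditional Rankin--Selberg lower bound on the same partial sum. The Hecke hypothesis enters principally to make Conjecture \ref{aque2} applicable, after which the proof proceeds along the lines of \cite{grs}.

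First, I would invoke Conjecture \ref{aque2}. Since the hypothesis of the proposition provides Hecke eigenform status for every $\bn$ with $N(\bn)<\eta t_j^{d_F}$, and since $\eta t_j^{d_F}\to\infty$, eventually $f_j$ is a joint eigenfunction of $T_\mathfrak{p}$ for every $\mathfrak{p}$ in the fixed finite set $S$ supplied by the conjecture. Passing to a subsequence along which $|f_j|^2\,dg$ converges weak-$*$, the limit has the form $\mu=\frac{c}{\vol(\bar{\G}(F)\backslash\bar{\G}(\A))}\,dg$ with $c\in[0,1]$; the task is to show $c>0$.

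Next, I would apply Lemma \ref{lem:grs} with $T_{f_j}=t_j$ and $B$ as in the hypothesis. Fixing $\alpha>0$ and $a>2(2\pi B)^{d_F}$, the inequality \eqref{e:sumrho_upperbd} gives the upper bound
\[
\frac{1}{B^{d_F}t_j^{d_F}}N(\bd)^{-1}\sum_{\substack{\ba\\ N(\ba)/N(\bd)<(t_j/(2\pi B))^{d_F}(a\alpha)^{-1}}}\rho(f_j)|\rho_{f_j}(\ba)|^2\leq c\cdot\frac{c_F(a)}{(\alpha-\varepsilon)^{d_1+2d_2}}+o(1).
\]
For the matching lower bound, I would start from Lemma \ref{e:inner_f^2E}: taking the residue at $s=1$ of the identity there and using $\|f_j\|_2=1$ produces $\rho(f_j)R_{f_j}=c_F$, where $R_{f_j}=\Res_{s=1}\sum_\ba|\rho_{f_j}(\ba)|^2/N(\ba)^s$. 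A smoothed Mellin/contour-shift argument, combined with the convexity bound \eqref{e:preconvexGL2} for $L(s,\pi_j\times\tilde\pi_j)$ (applicable since $t_j\to\infty$ forces temperedness at every archimedean place by \cite{CLLL}), then yields the asymptotic
\[
N(\bd)^{-1}\sum_{N(\ba)\leq Y}\rho(f_j)|\rho_{f_j}(\ba)|^2=\frac{c_F Y}{N(\bd)}(1+o(1)),\qquad j\to\infty,
\]
for $Y$ growing like a suitable power of $t_j$. Specializing $Y=(t_j/(2\pi B))^{d_F}(a\alpha)^{-1}N(\bd)$ bounds the left-hand side of the previous display below by $\frac{c_F}{(2\pi)^{d_F}B^{2d_F}a\alpha}(1+o(1))$.

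Comparing the two bounds and letting $j\to\infty$ gives $c\geq \frac{c_F(\alpha-\varepsilon)^{d_1+2d_2}}{(2\pi)^{d_F}B^{2d_F}a\alpha\,c_F(a)}>0$ for any admissible choice of $\alpha,a$, which is the desired conclusion. The main technical obstacle is the Rankin--Selberg step: for the contour shift at truncation scale $Y\sim t_j^{d_F}$ to leave an error of $o(Y)$, the polynomial growth of the analytic conductor of $L(s,\pi_j\times\tilde\pi_j)$ in $t_j$ has to be matched against the available off-line $L$-function bounds. This is comfortable when $[F:\Q]$ is large, but borderline for $F=\Q$, where one likely has to exploit the Hecke hypothesis more directly via the factorization $L(s,\pi_j\times\tilde\pi_j)=\zeta_F(s)L(s,\mathrm{Ad}\,\pi_j)$ and standard bounds on $L(1,\mathrm{Ad}\,\pi_j)$, or a modest subconvexity input, to ensure the uniform lower bound goes through.
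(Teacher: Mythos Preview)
Your proposal has a genuine gap at the Rankin--Selberg lower bound step. You invoke ``the convexity bound \eqref{e:preconvexGL2} for $L(s,\pi_j\times\tilde\pi_j)$'' and later the factorization $L(s,\pi_j\times\tilde\pi_j)=\zeta_F(s)L(s,\mathrm{Ad}\,\pi_j)$, but there is no automorphic representation $\pi_j$ attached to $f_j$. The hypothesis only gives that $f_j$ is a Hecke eigenfunction for $N(\bn)<\eta t_j^{d_F}$; it need not be a Maass--Hecke cusp form, and in general lies in a nontrivial linear combination of several such forms. Consequently the Dirichlet series $\sum_{\ba}|\rho_{f_j}(\ba)|^2 N(\ba)^{-s}$ is \emph{not} a Rankin--Selberg $L$-function, and neither the convexity bound \eqref{e:preconvexGL2} nor the factorization through $L(s,\mathrm{Ad}\,\pi_j)$ is available. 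While Lemma~\ref{e:inner_f^2E} does give this Dirichlet series a meromorphic continuation via the Eisenstein integral, obtaining the uniform-in-$t_j$ vertical-strip bounds needed for your contour shift would require controlling $\int|f_j|^2 E(g,s)\,dg$ on $\Re(s)<1$ using only $\|f_j\|_2=1$, which is exactly the kind of no-escape-of-mass information you are trying to prove.

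The paper circumvents this entirely. Rather than seek a direct lower bound for the partial sum over $f_j$, it introduces an auxiliary \emph{full} Maass--Hecke cusp form $\phi_j$ lying in the same finite-Hecke joint eigenspace as $f_j$, so that $\rho_{f_j}(\bn)=c_j\rho_{\phi_j}(\bn)$ for all $N(\bn)<\eta t_j^{d_F}$. For $\phi_j$ one has the unconditional AQUE of Theorem~\ref{aque1} (weak-$*$ limit with $c=1$), so Lemma~\ref{lem:grs} applied to \emph{both} $f_j$ and $\phi_j$ forces $|c_j|=o(1)$ under the contradiction hypothesis $c=0$. One then estimates $\|f_j-c_j\phi_j\|_2^2$ over a Siegel domain: the Fourier coefficients cancel for $N(\ba)\le\eta t_j^{d_F}$, and the tail is controlled again by \eqref{e:sumrho_upperbd} for each of $f_j$ and $\phi_j$ separately. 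This yields $\|f_j-c_j\phi_j\|_2^2=o(1)$, contradicting $1=\|f_j\|_2^2\le 2\|f_j-c_j\phi_j\|_2^2+2|c_j|^2$. The key idea you are missing is this comparison with a genuine Hecke form, which replaces the unavailable $L$-function input for $f_j$ by the known equidistribution of $\phi_j$.
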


\begin{proof}
By Conjecture \ref{aque2}, any weak-$\ast$-limit of $|f_j|^2\, dg$ as $j\to +\infty$ is of the form 
\begin{equation*}
\frac{c}{\vol(\bar{\G}(F) \backslash \bar{\G}(\A))} \, dg\quad\text{ for some constant }c\in [0, 1].
\end{equation*}
Assume for contradiction that there is a sequence of such Maass forms $\{f_j\}_{j\geq 1}$ such that $|f_j|^2\, dg\to 0$ weakly as $j\to +\infty$. 

Let $\{\phi_j\}_{j\geq 1}\subset L^2_0(\bar{\G}(F)\backslash \bar{\G}(\A)/\maxK)$ be a sequence of all Maass--Hecke cusp forms. 
We further assume that $\|\phi_j\|_2^2=1$.
Then, by Theorem \ref{aque1},  
\begin{equation*}
|\phi_j|^2\, dg \to \frac{dg}{\vol(\bar{\G}(F)\backslash \bar{\G}(\A))}\text{ weakly as $j\to +\infty$.}
\end{equation*}

For each $j$, let $E_j\subset L_0^2(\bar{\G}(F)\backslash \bar{\G}(\A)/\maxK)$ be the subspace of all joint eigenfunctions of Laplace--Beltrami operators (for all $v\in S_\infty$) and Hecke operators for $\bn$, $N(\bn)<\eta t_j$ containing $f_j$. 
Then there exist at least one Maass--Hecke cusp form $\phi$ and $c_j$ such that $\phi\in E_j$ and 
\begin{equation*}
\rho_{f_j}(\bn) = c_j \rho_{\phi}(\bn)\quad\text{ for all } N(\bn) < \eta t_j^{d_F}.
\end{equation*}
This is because all Hecke operators and the Laplace--Beltrami operators are simultaneuously diagonalizable, i.e., $L_0^2(\bar{\G}(F)\backslash \bar{\G}(\A)/\maxK)$ is a direct sum of eigenspaces spanned by joint eigenfunctions and therefore so is $E_j$.

Let $\varphi: (0, \infty)\to \R_{\geq 0}$ be a function which is supported on $[\eta^{-1}, \infty)$. 
By \eqref{e:fE=sum} in Lemma \ref{lem:grs} as $t_{j}\to \infty$, we get
\begin{multline*}
\sum_{\substack{\ba\\ N(\ba)\leq \eta N(\bd) t_j^{d_F}}} |\rho_{\phi_j}(\ba)|^2 \int_{F_\infty^\times} 
|W_{\phi_{j, \infty}}(a(y))|^2 (N(\bd)\|y\|_\infty)^{-1} \varphi\left(\frac{\|y\|_\infty N(\bd)}{N(\ba)}\right) \, d_\infty^\times y 
\\ = \frac{1}{\vol(\bar{\G}(F)\backslash \bar{\G}(\A)} \int_{F^\times \backslash \A^\times} \varphi(\|y\|) \|y\|^{-1} \, d^\times y + o(1).
\end{multline*}
On the other hand, since $\rho_{f_j}(bn) = c_j \rho_{\phi_j}(\bn)$ for any $N(\bn) \leq \eta t_j^{d_F}$, we have 
\begin{multline*}
|c_j|^2 \sum_{\substack{\ba\\ N(\ba)\leq \eta N(\bd) t_j^{d_F}}} |\rho_{\phi_j}(\ba)|^2 \int_{F_\infty^\times} 
|W_{\phi_{j, \infty}}(a(y))|^2 (N(\bd)\|y\|_\infty)^{-1} \varphi\left(\frac{\|y\|_\infty N(\bd)}{N(\ba)}\right) \, d_\infty^\times y 
\\ = \sum_{\substack{\ba\\ N(\ba)\leq \eta N(\bd) t_j^{d_F}}} |\rho_{f_j}(\ba)|^2 \int_{F_\infty^\times} 
|W_{f_{j, \infty}}(a(y))|^2 (N(\bd)\|y\|_\infty)^{-1} \varphi\left(\frac{\|y\|_\infty N(\bd)}{N(\ba)}\right) \, d_\infty^\times y 
\\ = \frac{c}{\vol(\bar{G}(F)\backslash \bar{\G}(\A)} \int_{F^\times \backslash \A^\times} \varphi(\|y\|) \|y\|^{-1} \, d^\times y + o(1)
\end{multline*}
and by assumption, $|c|=o(1)$.
Comparing the above, as $t_j\to +\infty$, we get
\begin{equation*}
|c_j|^2 
\frac{1}{\vol(\bar{G}(F)\backslash \bar{\G}(\A)} \int_{F^\times \backslash \A^\times} \varphi(\|y\|) \|y\|^{-1} \, d^\times y
= o(1),
\end{equation*}
and so
\begin{equation*}
|c_j| = o(1) \quad \text{ as } t_j\to +\infty. 
\end{equation*}
Since both $f_j$ and $\phi_j$ are $L^2$-normalized, we get
\begin{align*}
1 & = \|f_j\|_2^2
= \|f_j -c_j \phi_j + c_j\phi_j\|_2^2
\\ & \leq 2\|f_j-c_j\phi_j\|_2^2 + 2|c_j|^2\|\phi_j\|_2^2
= 2\|f_j-c_j\phi_j\|_2^2 + 2|c_j|^2
= 2\|f_j-c_j\phi_j\|_2^2+ o(1)
\end{align*}
as $t_j\to + \infty$, since $|c_j|=o(1)$. 

Take a Siegel $\Siegel=\{n(x)a(y)\kappa\in \bar{\G}(\A): x\in F\backslash \A, \, \|y\|\geq T_F, \, \kappa\in \K\}$. 
By \cite[Corollary 2.2.8]{g18}, there exists $T_F>0$ such that $\bar{\G}(F)\Siegel = \bar{\G}(\A)$.
Following the proof of Lemma \ref{lem:grs}, we have 
\begin{multline*}
\|f_j-c_j\phi_j\|_2^2 
\leq \int_{F^\times\backslash \A^\times}\int_{F\backslash \A} 
1_{\geq T_F}(\|y\|) 
|f_j(n(x)a(y))- c_j\phi(n(x)a(y))|^2 \|y\|^{-1}
\, dx\, d^\times y
\\ = \sum_{\ba} |\rho_{f_j}(\ba)-c_j \rho_{\phi_j}(\ba)|^2 
\int_{F_\infty^\times} 1_{\geq T_F}\left(\frac{\|y\|_\infty N(\bd)}{N(\ba)}\right) |W_{\phi_{j, \infty}}(a(y))|^2 (N(\bd) \|y\|_\infty)^{-1}\, d_\infty^\times y
\\ = \sum_{\substack{\ba\\ N(\ba) > \eta t_j^{d_F}}} |\rho_{f_j}(\ba)-c_j \rho_{\phi_j}(\ba)|^2 
\int_{F_\infty^\times} 1_{\geq T_F}\left(\frac{\|y\|_\infty N(\bd) }{N(\ba)}\right) |W_{\phi_{j, \infty}}(a(y))|^2 (N(\bd) \|y\|_\infty)^{-1}\, d_\infty^\times y.
\end{multline*}
Here $1_{\geq T_f}(y)=1$ when $y\geq T_F$ and $0$ otherwise. 

Following the proof of \eqref{e:fE=sum} in Lemma \ref{lem:grs}, 
\begin{multline*}
\sum_{\substack{\ba\\ N(\ba) > \eta t_j^{d_F}}} |\rho_{f_j}(\ba)-c_j \rho_{\phi_j}(\ba)|^2 
\int_{F_\infty^\times} 1_{\geq T_F}\left(\frac{\|y\|_\infty N(\bd)}{N(\ba)}\right) |W_{\phi_{j, \infty}}(a(y))|^2 (N(\bd)\|y\|_\infty)^{-1}\, d_\infty^\times y
\\ = \sum_{\substack{\ba\\ \eta t_j^{d_F} < N(\ba) \leq N(\bd) \frac{t_j^{d_F}}{T_F}}} |\rho_{f_j}(\ba)-c_j \rho_{\phi_j}(\ba)|^2 
\int_{F_\infty^\times} 1_{\geq T_F}\left(\frac{\|y\|_\infty N(\bd)}{N(\ba)}\right) |W_{\phi_{j, \infty}}(a(y))|^2 (N(\bd) \|y\|_\infty)^{-1}\, d_\infty^\times y 
\\ + o(1). 
\end{multline*}
Since the integrand is positive, 
\begin{multline*}
\int_{F_\infty^\times} 1_{\geq T_F}\left(\frac{\|y\|_\infty N(\bd)}{N(\ba)}\right) |W_{\phi_{j, \infty}}(a(y))|^2 (N(\bd)\|y\|_\infty)^{-1}\, d_\infty^\times y 
\\ \leq \frac{1}{N(\ba) T_F}
\int_{F_\infty^\times} |W_{\phi_{j, \infty}}(a(y))|^2\, d_\infty^\times y
= \frac{1}{N(\ba) T_F} \rho(\phi_j).
\end{multline*}
Since $\eta t_j^{d_F} T_F < T_F N(\ba) \leq N(\bd) t_j^{d_F}$, we get
\begin{multline*}
\|f_j-c_j\phi_j\|_2^2
\leq \frac{1}{\eta t_j^{d_F} T_F} \sum_{\substack{\ba\\ N(\ba) \leq N(\bd) \frac{t_j^{d_F}}{T_F}}}
\rho(\phi_j) |\rho_{f_j}(\ba) - c_j\rho_{\phi_j}(\ba)|^2 + o(1)
\\ \leq \frac{2}{T_F\eta t_j^{d_F}} \sum_{\substack{\ba\\ N(\ba) \leq N(\bd) \frac{t_j^{d_F}}{T_F}}} \rho(f_j)|\rho_{f_j}(\ba)|^2
+ \frac{2}{T_F\eta t_j^{d_F}} |c_j|^2 \sum_{\substack{\ba\\ N(\ba) \leq N(\bd) \frac{t_j^{d_F}}{T_F}}} \rho(\phi_j)|\rho_{\phi_j}(\ba)|^2
+ o(1).
\end{multline*}
Here we use $\rho(\phi_j) = \rho(f_j)$.
By \eqref{e:sumrho_upperbd} in Lemma \ref{lem:grs}, 
taking $\alpha>0$ such that $T_F= (2\pi B)^{d_F} \alpha a$ for some $a>2(2\pi B)^{d_F}$, we get 
\begin{equation*}
\leq  \frac{B^{d_F}}{T_F\eta}
\frac{c_F(a)}{(\alpha-\varepsilon)^{d_F}}\big(c + |c_j|^2 c_0\big) + o(1)
= o(1)
\end{equation*}
as $t_j\to +\infty$. 
Therefore, 
\begin{equation*}
1\leq 2\|f_j-c_j\phi_j\|_2^2 + o(1)
\leq o(1) \quad\text{ as } t_j\to +\infty, 
\end{equation*}
which is a contradiction. 
\end{proof}

Now we are ready to prove Theorem \ref{thm:QUE-multone}.

\begin{proof}[Proof of Theorem \ref{thm:QUE-multone}]
Assume a contradiction. 
For any sufficiently large $\tilde{t}>0$, there exist Maass--Hecke cuspforms $\phi_1, \phi_2\in L^2_0(\bar{\G}(F)\backslash \bar{\G}(\A)/\maxK)$ with the same Laplace--Beltrami eigenvalues with parameter $t_v$, such that they are not the same up to constant multiple and $t>\tilde{t}$ such that 
\begin{itemize}
\item $\frac{|t_v|}{t}\in [B^{-1}, B]$ for any $v\in S_\infty$; 
\item $\phi_1$ and $\phi_2$  share the same eigenvalues  for $N(\bn) \leq \eta t^{d_F}$. 
\end{itemize}
This assumption implies that there exist infinitely many pairs of Maass cusp forms $(\phi_{1, j}, \phi_{2, j})$ (for $j\geq 1$) and $t_j>0$ that satisfy the above conditions. 

In particular, we can choose a sequence of such pairs such that $t_j\to +\infty$ as $j\to +\infty$. 
For each $j$, let $W_{j, \infty}(a(y)) := W_{\phi_{1, j}, \infty}(a(y))= W_{\phi_{2, j}, \infty}(a(y))$ for $y\in F_\infty^{\times}$. 

Since $\phi_{1, j}$ and $\phi_{2, j}$ share the same Hecke eigenvalues for $N(\bn)\leq \eta t_j^{d_F}$, there exists $c_j$ such that $\rho_{\phi_{1, j}}(\bn)=c_j\rho_{\phi_{2, j}}(\bn)$ for $N(\bn)\leq \eta t_j^{d_F}$.

By our assumption, $\phi_{1, j}-c_j \phi_{2, j}\neq 0$ for all $j$. 
Let 
\begin{equation*}
f_j := \frac{\phi_{1, j}-c_j \phi_{2, j}}{\|\phi_{1, j}-c_j \phi_{2, j}\|_2}. 
\end{equation*}
Then $f_j$ is a Maass form, $L^2$-normalized and a joint Hecke eigenfunctions for $N(\bn)\leq \eta t_j^{d_F}$.
So we have constructed a sequence $\{f_j\}_{j\geq 1}$ that satisfies the hypothesis of Proposition \ref{prop:escape}.
We have $W_{\psi_j, \infty}(a(y)) = W_{j, \infty}(a(y))$ and 
\begin{equation*}
\rho_{f_j}(\ba) = \frac{\rho_{\phi_{1, j}}(\ba)-c_j \rho_{\phi_{2, j}}(\ba)}{\|\phi_{1, j}-c_j\phi_{2, j}\|_2}
\quad \text{ for all }\ba.
\end{equation*}
Then $\rho_{f_j}(\bn)=0$ for $N(\bn)\leq \eta t_j^{d_F}$. 

Take $T_F>0$ as in the proof of Proposition \ref{prop:escape}.
Whenever $A>T_F$, 
\begin{multline*}
\int_{\bar{\G}(F)\backslash \bar{\G}(\A)} 1_{\geq A}(g) |f_j(g)|^2 \, dg
= \int_{F^\times \backslash \A^\times} 1_{\geq A}(\|y\|)
\int_{F\backslash \A} |f_j(n(x)a(y))|^2 \, dx\, \|y\|^{-1}\, d^\times y
\\ = \sum_{\ba} |\rho_{f_j}(\ba)|^2 \int_{F_\infty^\times} 1_{\geq A}\left(\frac{\|y\|_\infty N(\bd)}{N(\ba)}\right) |W_{j, \infty}(a(y))|^2 (N(\bd)\|y\|_\infty)^{-1} \, d_\infty^\times y
\\ \leq \frac{1}{A} \sum_{\substack{\ba\\ N(\ba) > \eta t_j^{d_F}}} \frac{|\rho_{f_j}(\ba)|^2 }{N(\ba)}
\int_{F_\infty^\times} 1_{\geq A}\left(\frac{\|y\|_\infty N(\bd)}{N(\ba)}\right) |W_{j, \infty}(a(y))|^2 \, d_\infty^\times y.
\end{multline*}
Here we use that $\rho_{f_j}(\ba)=0$ unless $N(\ba) > \eta t_j^{d_F}$.
Following the argument in the proof of Lemma \ref{lem:grs}, p.\pageref{e:bounding_tail}, 
by taking $A\geq \eta^{-1}$, 
\begin{equation*}
\frac{1}{A } \sum_{\substack{\ba\\ N(\ba)> \eta t_j^{d_F}}} \frac{|\rho_{f_j}(\ba)|^2 }{N(\ba)}
\int_{F_\infty^\times} 1_A\left(\frac{\|y\|_\infty N(\bd)}{N(\ba)}\right) |W_{j, \infty}(a(y))|^2 \, d_\infty^\times y = o(1), 
\end{equation*}
as $t_j\to +\infty$. 
Therefore we conclude that, for any sufficiently large $A\geq \eta^{-1}$,  
\begin{equation*}
\int_{\bar{\G}(F)\backslash \bar{\G}(\A)} 1_A(g) |f_j(g)|^2 \,dg = o(1)\quad\text{ as } t_j\to +\infty. 
\end{equation*}
This implies that, for any compact set $U\subset \{g\in \bar{\G}(F)\backslash \bar{\G}(\A):\, H(g)\geq \eta^{-1}\}$, 
\begin{equation*}
\int_{U} |f_j(g)|^2 \,dg=o(1) \quad\text{ as } t_j\to +\infty. 
\end{equation*}
By Proposition \ref{prop:escape}, such a sequence cannot exist, so it is a contradiction. 
\end{proof}

\bibliographystyle{alpha}
\bibliography{bibfile}
\end{document}